\numberwithin{equation}{section}
\numberwithin{figure}{section}
\theoremstyle{plain}
\newtheorem{thm}{Theorem}[section]
\newtheorem{lem}[thm]{Lemma}
\newtheorem{cor}[thm]{Corollary}
\theoremstyle{remark}
\newcommand{\M}{\operatorname{M}}
\newcommand{\wt}{\operatorname{wt}}
\newcommand{\Orb}{\operatorname{Orb}}
\newcommand{\CS}{\operatorname{CS}}
\newcommand{\CSTC}{\operatorname{CSTC}}
\begin{document}

\title{Cyclically Symmetric Lozenge Tilings of a Hexagon with Four Holes}

\author{Tri Lai}
\address{Department of Mathematics, University of Nebraska -- Lincoln, Lincoln, NE 68588}
\email{tlai3@unl.edu}
\author{Ranjan Rohatgi}
\address{Department of Mathematics and Computer Science, Saint Mary's College, Notre Dame, IN 46556}
\email{rrohatgi@saintmarys.edu}

\subjclass[2010]{05A15,  05B45}

\keywords{perfect matchings, plane partitions, lozenge tilings, dual graph,  graphical condensation.}

\date{\today}

\dedicatory{}

\begin{abstract}
The work of Mills, Robbins, and Rumsey on cyclically symmetric plane partitions yields a simple product formula for the number of lozenge tilings of a regular hexagon, which are invariant under roation by $120^{\circ}$. In this paper we generalize this result by enumerating the cyclically symmetric lozenge tilings of a hexagon in which four triangles have been removed in the center.
\end{abstract}

\maketitle
\section{Introduction}\label{intro}

A \emph{plane partition} is a rectangular array of non-negative integers $\pi=(\pi_{i,j})$ with weakly decreasing rows and columns. Here $\pi_{i,j}$'s are the \emph{parts} (or the \emph{entries}) of the plane partition, and the sum of all the parts $|\pi|=\sum_{i,j}\pi_{i,j}$ is called the \emph{norm} (or the \emph{volume}) of the plane partition. For example, the plane partition
\[\pi=\begin{tabular}{rccccccccc}
4   & 4           & 2         & 1  \\\noalign{\smallskip\smallskip}
4   &  3          & 1           & 0         \\\noalign{\smallskip\smallskip}
4   &  2          & 0          &  0                    \\\noalign{\smallskip\smallskip}
\end{tabular}\]
has $3$ rows, $4$ columns, and the norm $25$.

A plane partition with $a$ rows, $b$ columns, and parts at most $c$ is usually identified with its $3$-D interpretation, a monotonic stack (or pile) of unit cubes contained in an $a\times b\times c$ box. In particular, the monotonic stack corresponding to the above plane partition $\pi$ has the heights of the columns of unit cubes weakly decreasing along $\overrightarrow{\textbf{Ox}}$ and $\overrightarrow{\textbf{Oy}}$ directions as shown in Figure \ref{bijection}(a). In view of this, we say that the latter plane partition `fits in an $a \times b \times c$ box.' Each of the monotonic stacks in turn can be viewed as a lozenge tiling of the semi-regular hexagon $\mathcal{H}_{a,b,c}$ with side-lengths $a,b,c,a,b,c$ (in clockwise order, starting from the northwest side) and all angles $120^{\circ}$ on the triangular lattice. A \emph{lozenge} is a union of two unit equilateral triangles sharing an edge, and a \emph{lozenge tiling} of a region is a covering of the region by lozenges so that there are no gaps or overlaps. (See Figure \ref{bijection}(b). We ignore the red intervals at the center of each lozenge at this moment.)

\begin{figure}\centering
\includegraphics[width=13cm]{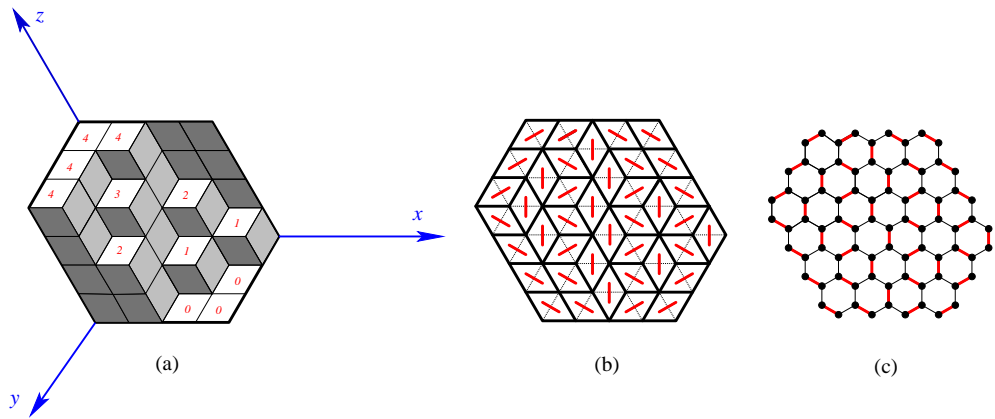}
\caption{The bijection between plane partitions, lozenge tilings, and perfect matchings.}\label{bijection}
\end{figure}

MacMahon \cite{Mac} proved that the total number of plane partitions that fit in an $a\times b\times c$ box, also the number of lozenge tilings of the hexagon $\mathcal{H}_{a,b,c}$, is equal to
\begin{equation}\label{Maceq}
\prod_{i=1}^{a}\prod_{j=1}^{b}\prod_{k=1}^c\frac{i+j+k-1}{i+j+k-2}.
\end{equation}
It has been shown that various symmetry classes of lozenge tilings of the semi-regular hexagon also yield simple product formulas (see e.g. the classical paper of Stanley \cite{Stanley}, or the excellent survey of Krattenthaler \cite{Krattenthaler}). In this paper we focus on a particular symmetry class of the lozenge tilings, those are invariant under a $120^{\circ}$ rotation.  We call such lozenge tilings \emph{cyclically symmetric tilings} of the hexagon. The tiling formula for this symmetry class was introduced by Macdonald \cite{Macdonald} and first proven by Mills, Robbins, and Rumsey \cite{MRR}. It is easy to see that the hexagon $\mathcal{H}_{a,b,c}$ has a cyclically symmetric tiling if and only if $a=b=c$.  In particular, we have the following formula:
\begin{equation}
\CS(\mathcal{H}_{a,a,a})=\prod_{i=1}^{a}\left(\frac{3i-1}{3i-2}\prod_{j=i}^a\frac{a+i+j-1}{2i+j-1}\right).
\end{equation}
Here we use the notation $\CS(R)$ for the number of cyclically symmetric tilings of a region $R$.

It is worth noticing that the original product formula formula of Macdonald and the proof of Mills, Robbins and Rumsey are actually for a weighted version of the above formula in which each lozenge tiling carries a `weight'  $q^{n}$, where $n$ is the volume of the monotonic stack corresponding to the tiling. An alternative  proof for the unweighted case can be found in \cite{CiuPP}.

Generalizing MacMahon's classical tiling formula (\ref{Maceq}) is an important topic in the study of plane partitions and the study of enumeration of tilings. A natural way to generalize MacMahon's tiling formula is to enumerate lozenge tilings of a hexagon with certain `defects'. In particular, we are interested in hexagons with several triangles removed in the center or on the boundary (see e.g. \cite{Ciu, CEKZ, Tri1, Tri2, Rosen, CK, CK2, Eisen, Eisen2, Propp, CiuPP, CiuPP2, CLP, OK} and the lists of references therein). If a `defected' hexagon has a simple product tiling formula, it is likely that its cyclically symmetric tilings are also enumerated by a simple product formula. Even though the enumeration of tilings of defected hexagons has been investigated extensively, the study of cyclically symmetric tilings is very limited. One of the few results in this area is Ciucu's formula for the number of cyclically symmetric tilings of a hexagon with a triangle removed in the center \cite{CiuPP}. This paper is devoted to the study of cyclically symmetric lozenge tilings of several new defected hexagons. In particular our hexagons have four triangular holes in the center. We would like to emphasize that most defected hexagons that have been studied have defects that are either a single triangle, a cluster of adjacent triangles, or several aligned triangles. Hexagons with four non-aligned, non-adjacent triangular holes, like the ones we will introduce below, have \emph{not} appeared in the literature before. The most closely related work is the result of first author on the enumeration of tilings of a hexagon with three `bowtie' holes on the boundary in \cite{Tri2}.

Let $x,y,z,a$ be non-negative integers. Our first family of defected hexagons consists of the hexagons with side-lengths\footnote{From now on, we always list the side-lengths of a hexagon in clockwise order, starting from the northwestern side.} $t+x+3a,\ t,\ t+x+3a,\ t,\ t+x+3a,\ t$,  where an up-pointing triangle of side-length $x$ has been removed from the center, and three up-pointing triangles of side-length $a$ have been removed along equal intervals connecting the center to the midpoints of the southern, northeastern, and northwestern sides of the hexagon. We assume in addition that the distance from the central triangular hole to each of three satellite holes is $2y$. Denote by $\mathcal{H}_{t,y}(a,x)$ the resulting defected hexagon (see Figure \ref{4holes} for an example; the black triangles represent the triangles that have been removed).

\begin{figure}\centering
\includegraphics[width=13cm]{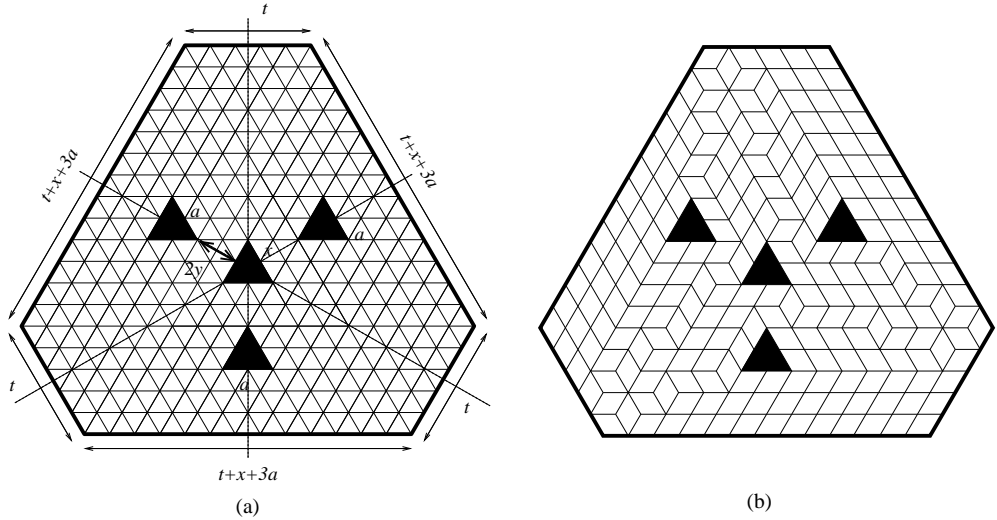}
\caption{(a) The hexagon with four holes $\mathcal{H}_{5,1}(2,2)$. (b) A cyclically symmetric tiling of $\mathcal{H}_{5,1}(2,2)$.}\label{4holes}
\end{figure}

The second family consists of hexagons with four similar triangular holes where the $a$-triangles lie on the intervals connecting the center and the midpoints of the northern, southwestern, and southeastern sides of the hexagon.  We also assume that the distance from the central hole to each of the other holes is $2y+2a-2$ (if the distance between the central hole and each of the satellite holes is less than $2a-2$ then the defected hexagon has no tilings). The resulting region is denoted by $\overline{\mathcal{H}}_{t,y}(a,x)$ (illustrated in Figure \ref{4holes2}).

\begin{figure}\centering
\includegraphics[width=13cm]{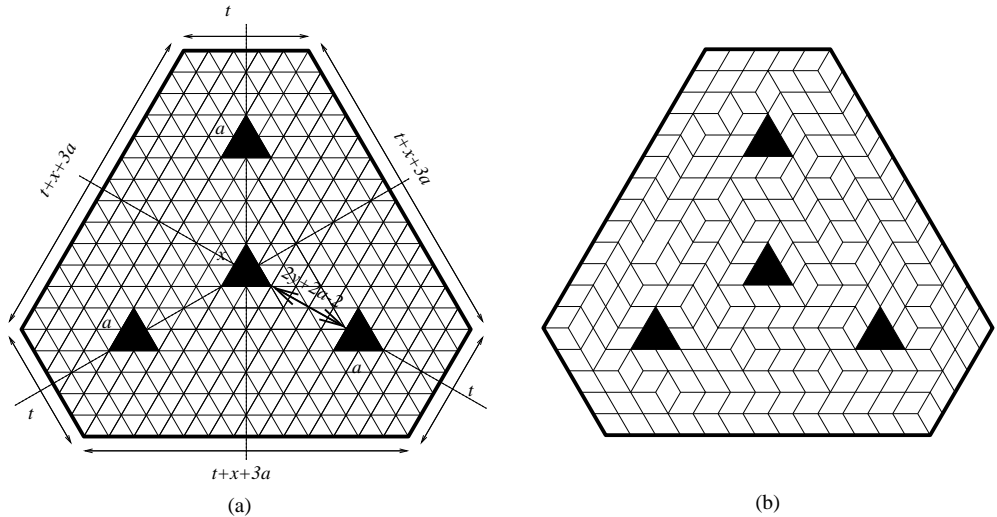}
\caption{(a) The hexagon with four holes $\overline{\mathcal{H}}_{5,1}(2,2)$. (b) A cyclically symmetric tiling of $\overline{\mathcal{H}}_{5,1}(2,2)$.}\label{4holes2}
\end{figure}

As in the case of the ordinary hexagons, we are interested in cyclically symmetric tilings of the defected hexagons $\mathcal{H}_{t,y}(a,x)$ and $\overline{\mathcal{H}}_{t,y}(a,x)$ (see Figures \ref{4holes}(b) and \ref{4holes2}(b) for examples).  In this paper we show that the numbers of cyclically symmetric tilings of our defected hexagons are also given by simple product formulas (see Theorems \ref{main1} and \ref{main2}).

We are also interested in a closely related symmetry class to cyclically symmetric tilings: the tilings invariant under $120^{\circ}$-rotation and reflection over the vertical symmetry axis of the defected hexagons $\mathcal{H}_{t,y}(a,x)$ and $\overline{\mathcal{H}}_{t,y}(a,x)$. It is easy to see that such a symmetric tiling exists only if $a,t,$ and $x$ are all even, and if all shaded lozenges in Figure \ref{Onethird5} are present. Each of our defected hexagons is divided into six congruent smaller regions. Therefore the number of new symmetric tilings is equal to the number of tilings of the smaller region. We prove that these classes of symmetric tilings are enumerated by simple product formulas (see Theorems \ref{main3} and \ref{main4}). Our result extend the work of Mills, Robbins, and Rumsey \cite{MRR2} for the case of the ordinary hexagon.

\begin{figure}\centering
\includegraphics[width=12cm]{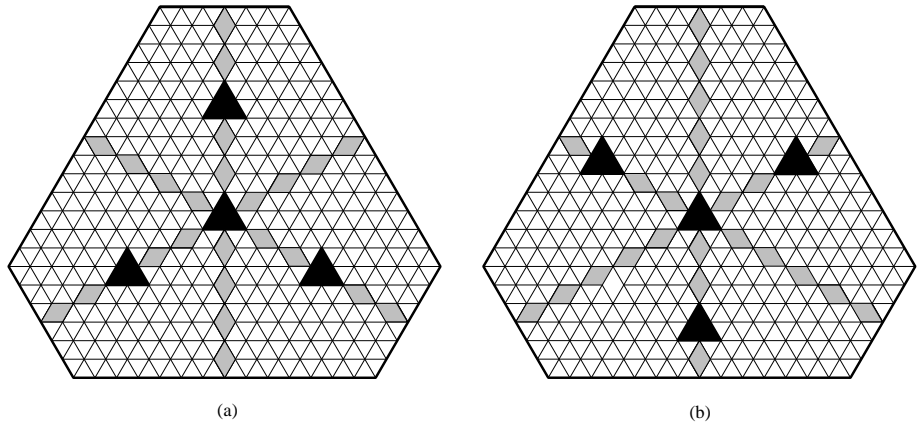}
\caption{The tilings invariant under the $120^{\circ}$ rotation and the reflection over the vertical symmetry axis.}\label{Onethird5}
\end{figure}

The rest of this paper is organized as follows. Due to the complexity of our tiling formulas, the precise statement of our main results are presented in Section \ref{Statement}.  Section \ref{background} contains several fundamental facts and results that will be employed in our proofs. In particular, we will introduce the two main ingredients of our proofs: Kuo's graphical condensation \cite{Kuo} and Ciucu's factorization theorem \cite{CiuRef}. Next, we enumerate tilings of  several new regions that are roughly one-sixth of our defected hexagons in Section \ref{lemmas}. These enumerations will be used to prove our main theorems in Section \ref{proofs}. Finally, Section \ref{verification} is devoted to the algebraic verifications in the proofs in Section \ref{lemmas}.

\section{Precise statement of the main results.}\label{Statement}

\medskip

 Recall that for non-negative integer $n$, the \emph{Pochhammer symbol} $(x)_n$ is defined by
\begin{equation}\label{Poceq}
(x)_n=\begin{cases}
x(x+1)\dots(x+n-1) &\text{ if $n>0$;}\\
1 &\text{ if $n=0$.}
\end{cases}
\end{equation}
We also use the standard extension of the Pochhammer symbol to negative indices, defined by
\begin{equation}
(x)_{-n}=\frac{1}{(x-1)(x-2)\dotsc(x-n)},
\end{equation}
where $n$ is a positive integer. This extension does not work in the case where $x$ is an integer less than or equal $n$. However, it is not the case in our paper.
In addition, we make use of the \emph{skipped Pochhammer symbol} $[x]_n$ defined by
\begin{equation}\label{Poceq2}
[x]_n=\begin{cases}
x(x+2)(x+4)\dots(x+2(n-1)) &\text{ if $n>0$;}\\
1 &\text{ if $n=0$;}\\
\dfrac{1}{(x-2)(x-4)\dotsc(x+2n)}&\text{ if $n<0$.}
\end{cases}
\end{equation}

Next, we define four products as follows:
\begin{align}\label{P1form}
&P_1(x,y,z,a):=\frac{1}{2^{y+z}}\prod_{i=1}^{y+z}\frac{(2x+6a+2i)_i[2x+6a+4i+1]_{i-1}}{(i)_i[2x+6a+2i+1]_{i-1}}\notag\\
&\times\prod_{i=1}^{a}\frac{(z+i)_{y+a-2i+1}(x+y+2z+2a+2i)_{2y+2a-4i+2}(x+3i-2)_{y-i+1}(x+3y+2i-1)_{i-1}}{(i)_y(y+2z+2i-1)_{y+2a-4i+3}(2z+2i)_{y+2a-4i+1}(x+y+z+2a+i)_{y+a-2i+1}}.
\end{align}
\begin{align}\label{P2form}
&P_2(x,y,z,a):= \frac{[x+3y]_{a}(x+2y+z+2a)_{a}}{2^{2a}[x+3y+2z+2a+1]_{a}}\frac{1}{2^{2(y+z)}}\prod_{i=1}^{y+z}\frac{(2x+6a+2i-2)_{i-1}[2x+6a+4i-1]_{i}}{(i)_i[2x+6a+2i-1]_{i-1}}\notag\\
&\times\prod_{i=1}^{a}\frac{(z+i)_{y+a-2i+1}(x+y+2z+2a+2i-1)_{2y+2a-4i+3}(x+3i-2)_{y-i}(x+3y+2i-1)_{i-1}}{(i)_y(y+2z+2i-1)_{y+2a-4i+3}(2z+2i)_{y+2a-4i+1}(x+y+z+2a+i-1)_{y+a-2i+2}}.
\end{align}

\begin{align}\label{F1form}
F_1(x,y,z,a)&=\frac{1}{2^{ya+z}}\prod_{i=1}^{y+z}\frac{i!(x+3a+i-3)!(2x+6a+2i-4)_i(x+3a+2i-2)_i(2x+6a+3i-4)}{(x+3a+2i-2)!(2i)!}\notag\\
& \times\frac{\prod_{i=1}^{\lfloor \frac{a}{3}\rfloor}(x+3y+6i-3)_{3a-9i+1}}{\prod_{i=1}^{\lfloor \frac{a-1}{3}\rfloor}(x+3y+6i-2)} \prod_{i=1}^{a-1}(x+3i-2)_{y-i+1}(x+y+2z+2a+2i)_{2y+2a-4i}\notag\\
&\times\frac{[x+y+2z+2a+1]_{y}}{[x+y+2a-1]_{y}}\prod_{i=1}^{y}\frac{[2i+3]_{z-1}(x+3a+3i-5)_{2y+z-a-4i+5}}{(a+i+1)_{z-1}(i)_{a+1}[2i+3]_{a-2}[2x+6a+6i-7]_{z+2y-4i+3}}.
%\frac{\prod_{i=1}^{\lfloor \frac{a}{3}\rfloor}(x+3y+6i-3)_{3a-9i+1}}{\prod_{i=1}^{\lfloor \frac{a-1}{3}\rfloor}(x+3y+6i-2)}
\end{align}

\begin{align}\label{F2form}
F_2(x,y,z,a)&=\frac{1}{2^{y(a+2)+2a+z+1}} \prod_{i=1}^{y+z}\frac{i!(x+3a+i-1)!(2x+6a+2i)_i(x+3a+2i)_i(x+3a+3i)}{(x+3a+2i)!(2i)!} \notag\\
&\times\frac{\prod_{i=1}^{\lfloor\frac{y+1}{3}\rfloor}(x+3i-2)_{3y-9i+4}}{\prod_{i=1}^{\lfloor\frac{y}{3}\rfloor}(x+3y-6i)}\notag\\ &\times\prod_{i=1}^{y}\frac{[2i+3]_{z-1}(x+y+2a+2i-1)_{y+z-3i+2}(x+y+2z+2a+2i)_{2y+2a-4i+3}}{(a+i+2)_{z-1}(i)_{a+2}[2i+3]_{a-1}[2x+6a+6i-1]_{2y+z-4i+2}}. %\notag\\
%&\frac{\prod_{i=1}^{\lfloor\frac{y+1}{3}\rfloor}(x+3i-2)_{3y-9i+4}}{\prod_{i=1}^{\lfloor\frac{y}{3}\rfloor}(x+3y-6i)} .
\end{align}

The number of cyclically symmetric tilings of $\mathcal{H}_{t,y}(a,x)$ is given by a simple product formula when $x$ is even. However, for odd $x$, the region does \emph{not} yield a simple product formula.

\begin{thm}\label{main1} For non-negative integers $y,t,a,x$
\begin{align}\label{main1eq1}
\CS&(\mathcal{H}_{2t+1,y}(2a,2x))=2^{2t+4a+1}P_1(x+1,y,t-y,a) P_2(x+1,y,t-y,a),
\end{align}
\begin{align}\label{main1eq2}
\CS&(\mathcal{H}_{2t,y}(2a,2x))=2^{2t+4a}P_1(x+1,y,t-y-1,a) P_2(x+1,y,t-y,a),
\end{align}
\begin{align}\label{main1eq3}
\CS&(\mathcal{H}_{2t+1,y}(2a+1,2x))=2^{2t+4a+3}F_1(x+1,y,t-y,a+1)F_2(x+1,y,t-y,a),
\end{align}
and
\begin{align}\label{main1eq4}
\CS&(\mathcal{H}_{2t,y}(2a+1,2x))=2^{2t+4a+2}F_1(x+1,y,t-y-1,a+1)F_2(x+1,y,t-y,a).
\end{align}
\end{thm}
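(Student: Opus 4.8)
\emph{Overview and Step 1 (quotient by the rotation).} The plan is to reduce $\CS(\mathcal{H}_{t,y}(a,x))$ first to an ordinary tiling count, then to apply Ciucu's factorization theorem, and finally to invoke the enumeration of the resulting ``one--sixth'' regions (to be carried out in Section~\ref{lemmas}). Since the center of $\mathcal{H}_{t,y}(a,x)$ lies inside the removed central $x$-triangle, the $120^{\circ}$ rotation $\rho$ acts \emph{freely} on the portion of the triangular lattice occupied by the region. Passing to the honeycomb dual graph, a $\rho$-invariant perfect matching therefore descends to, and is uniquely recovered from, a perfect matching of the quotient graph. Drawn in the plane, this quotient is an (annular) lozenge region $F=F_{t,y}(a,x)$ that is one third of the hexagon: its outer boundary carries a triangular notch coming from the central $x$-hole, its inner boundary is the image of the three satellite $a$-holes identified into one, and one has $\CS(\mathcal{H}_{t,y}(a,x))=\M(F_{t,y}(a,x))$.

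\emph{Step 2 (factorization).} The vertical symmetry axis $\ell$ of $\mathcal{H}_{t,y}(a,x)$ normalizes $\langle\rho\rangle$, hence it induces a reflection symmetry of $F_{t,y}(a,x)$; moreover $\ell$ passes through both the $x$-notch and the single satellite hole, so cutting $F$ along $\ell$ produces two simply connected halves $F^{+}$ and $F^{-}$. Ciucu's factorization theorem then yields
\[
\CS(\mathcal{H}_{t,y}(a,x))=\M(F)=2^{c}\,\M(F^{+})\,\M(F^{-}),
\]
where the half--edges of $F^{\pm}$ lying along $\ell$ carry weight $\tfrac12$ and $c$ is the number of lattice intervals of $\ell$ interior to $F$; a direct count gives $c=2t+4a+1,\ 2t+4a,\ 2t+4a+3,\ 2t+4a+2$ in the four cases of the theorem. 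Whether the satellite $a$-triangle meeting $\ell$ is split evenly by $\ell$ or not depends on the parity of $a$, and this is precisely what makes the half--regions $F^{\pm}$ (hence the product formula) change shape between the even-$a$ and odd-$a$ cases.

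\emph{Step 3 (the one--sixth enumerations and conclusion).} The regions $F^{+}$ and $F^{-}$ are, after the reparametrization recorded by the slots $(x+1,y,t-y,a)$ and $(x+1,y,t-y-1,a)$ — the two halves differing by exactly one row precisely when the relevant vertical dimension is odd, which is why the $z$-arguments of the two factors may disagree — the one--sixth regions whose weighted tiling numbers are $P_1$ and $P_2$ when $a$ is even, and $F_1$ and $F_2$ when $a$ is odd. Granting the formulas for these regions (Step~4), equations \eqref{main1eq1}--\eqref{main1eq4} follow at once from Steps~1--2.

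\emph{Step 4 (proving the one--sixth formulas; Sections~\ref{lemmas} and~\ref{verification}).} Each of $P_1,P_2,F_1,F_2$ is to be proved by Kuo's graphical condensation: a judicious choice of four boundary unit triangles gives a recurrence relating the tiling number of the region to those of regions with smaller parameters, and one checks that the proposed product satisfies both the recurrence and suitable base cases. I expect this to be the main obstacle. Choosing condensation sites that keep \emph{every} region in the recurrence inside the same parametric family is delicate — the notch and the off--axis hole severely constrain which deformations are admissible, and it may be necessary to prove the four formulas in a coupled induction rather than separately — and the subsequent verification that the ratios of Pochhammer-symbol products collapse to the required identities is a long computation, which is exactly what Section~\ref{verification} is devoted to.
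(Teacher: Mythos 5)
Your proposal follows essentially the same route as the paper: pass to the orbit (quotient) graph under the $120^{\circ}$ rotation so that cyclically symmetric tilings become ordinary perfect matchings of a one-third region, apply Ciucu's factorization theorem along the induced vertical symmetry axis to obtain the powers $2^{2t+4a+1}$, $2^{2t+4a}$, $2^{2t+4a+3}$, $2^{2t+4a+2}$, identify the two halves with the one-sixth regions ($\mathcal{R}$-type with tiling generating functions $P_1,P_2$ for even hole size, $\mathcal{F}$-type with $F_1,F_2$ for odd), and prove those formulas by Kuo condensation plus algebraic verification. The argument is correct and matches the paper's proof in structure and in all essential details.
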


The region $\overline{\mathcal{H}}_{t,y}(a,x)$ has a simple product formula for the number of cyclically symmetric tilings when $a$ is even. Our tiling formulas are written in terms of the four products $E_1(x,y,z,a)$, $E_2(x,y,z,a)$, $E_3(x,y,z,a)$, and $E_4(x,y,z,a)$  defined below. In the case of odd $a$, we do not have a simple product formula.

The product formula $E_1(x,y,z,a)$ is defined as
\begin{equation}\label{E1aform}
E_{1}(x,0,z,a)= \frac{1}{2^{a+z-2}}\prod_{i=1}^{a-1}\frac{(2x+2i)_i[2x+4i+1]_{i-1}}{(i)_i[2x+2i+1]_{i-1}}\prod_{i=1}^{z-1}\frac{(2x+6a+2i)_i[2x+6a+4i+1]_{i-1}}{(i)_i[2x+6a+2i+1]_{i-1}},
\end{equation}
and for positive $y$
\begin{align}\label{E1bform}
&E_1(x,y,z,a)= \frac{1}{2^{y+z-1}}\prod_{i=1}^{a}\frac{(2x+2i)_i[2x+4i+1]_{i-1}}{(i)_i[2x+2i+1]_{i-1}}\prod_{i=1}^{y+z-1}\frac{(2x+6a+2i)_i[2x+6a+4i+1]_{i-1}}{(i)_i[2x+6a+2i+1]_{i-1}}\notag\\
&\times\prod_{i=1}^{a}\frac{(z+i)_{y+a-2i+1}(2x+3y+2z+4a+2i-3)_{y+2a-4i+1}(x+a+i)_{y+a-2i}(2x+3y+3a+3i-3)_{a-i}}{(2i)_{y-1}(y+2z+2i-1)_{y+2a-4i+2}(x+y+z+2a+i-1)_{y+a-2i}(2x+3a+3i)_{a-i}}.
\end{align}

The product $E_2(x,y,z,a)$ is defined similarly:
\begin{equation}\label{E2aform}
E_{2}(x,0,z,a)= \frac{1}{2^{2a+2z-3}}\prod_{i=1}^{a-1}\frac{(2x+2i-2)_{i-1}[2x+4i-1]_{i}}{(i)_i[2x+2i-1]_{i-1}}\prod_{i=1}^{z-1}\frac{(2x+6a+2i)_{i-1}[2x+6a+4i+1]_{i}}{(i)_i[2x+6a+2i+1]_{i-1}},
\end{equation}
and for positive $y$
\begin{align}\label{E2bform}
&E_{2}(x,y,z,a)=\frac{1}{2^{a+2y+2z-2+\lfloor \frac{a}{2}\rfloor}}\notag\\
&\times\prod_{i=1}^{a}\frac{(2x+2i-2)_{i-1}[2x+4i-1]_{i}}{(i)_i[2x+2i-1]_{i-1}}\prod_{i=1}^{y+z-1}\frac{(2x+6a+2i-2)_{i-1}[2x+6a+4i-1]_{i}}{(i)_i[2x+6a+2i-1]_{i-1}} \notag\\
&\times \frac{(x+a)_{\lfloor \frac{a+1}{2}\rfloor}(x+2\lfloor \frac{y}{2}\rfloor+\lfloor \frac{y-1}{2}\rfloor+z+2a)_a(2x+3y+3a-3)_a[2x+2\lfloor \frac{y}{2}\rfloor+4\lfloor \frac{y-1}{2}\rfloor+2z+4a+1]_a}{(x+y+a-1)_a(x+y+z+2a-1)_a[2x+4y+2z+4a-3]_a[2x+4a-2\lfloor \frac{a+1}{2}\rfloor+1]_{\lfloor \frac{a+1}{2}\rfloor}}\notag\\
&\times\prod_{i=1}^{a}\frac{(z+i)_{y+a-2i+1}(2x+3y+2z+4a+2i-3)_{y+2a-4i+1}(x+a+i)_{y+a-2i}(2x+3y+3a+3i-3)_{a-i}}{(2i)_{y-1}(y+2z+2i-1)_{y+2a-4i+2}(x+y+z+2a+i-1)_{y+a-2i}(2x+3a+3i)_{a-i}}.
\end{align}

Our third $E$-type product is defined based on $y$ as follows:
\begin{align}\label{E3aform}
E_3(x,0,z,a)=& \frac{1}{2^{a+z-2}}\prod_{i=1}^{a-1}\frac{i!(x+i-2)!(2x+2i-2)_i(x+2i-1)_i(2x+3i-2)}{(x+2i-1)!(2i)!}\notag\\
&\prod_{i=1}^{z-1}\frac{i!(x+3a+i-1)!(2x+6a+2i)_i(x+3a+2i)_i(2x+6a+3i)}{(x+3a+2i)!(2i)!},
\end{align}
and for positive $y$-parameter, we define the $E_3$ product as
\begin{align}\label{E3bform}
E_{3}(x,2k,z,a)&=2^{\lfloor\frac{a+1}{2}\rfloor\lfloor\frac{z+1}{2}\rfloor+\lfloor\frac{a}{2}\rfloor\lfloor\frac{z}{2}\rfloor-a-z-2k+1}\notag\\
&\times\prod_{i=1}^{2k-1+a+z}\frac{i!(x+i-2)!(2x+2i-2)_i(x+2i-1)_i(2x+3i-2)}{(x+2i-1)!(2i)!}\notag\\
&\times\prod_{i=1}^{\lceil \frac{a-1}{3}\rceil}[2x+6k+2f(a+i-1)-1]_{f(a-3i+2)-1}\prod_{i=1}^{\lceil \frac{a-2}{3}\rceil}(x+3k+f(a+i-2)+1)_{f(a-3i+1)-1}\notag\\
&\times\prod_{i=1}^{\lfloor \frac{a+1}{2}\rfloor}[2x+6k+2z+4a+4i-5]_{\lfloor\frac{z}{2}\rfloor +a-5i+4}(x+3k+z+2a+2i-3)_{\lfloor\frac{z+1}{2}\rfloor+a-5i+4}\notag\\
&\times\prod_{i=1}^{\lfloor\frac{a}{2}\rfloor}[2x+6k+2z+4a+4i-3]_{\lfloor\frac{z+1}{2}\rfloor+a-5i+1} (x+3k+z+2a+2i-2)_{\lfloor\frac{z}{2}\rfloor+a-5i+2}\notag\\
&\times\prod_{i=1}^{a}\frac{[2k+2i-1]_{z+a-2i+1}(k+i)_{z+a-2i+1}}{(i)_{z+a-2i+1}[2x+4k+2a+2i-3]_{z+a-2i+1}(x+4k+z+2a+i-2)_{z+a-2i+1}},
\end{align}
where $f(x)=2\lfloor\frac{x+1}{2}\rfloor+\lfloor\frac{x}{2}\rfloor$, and
\begin{align}\label{E3cform}
E_{3}(x,2k+1,z,a)&=2^{\lfloor\frac{a}{2}\rfloor\lfloor\frac{z+1}{2}\rfloor+\lfloor\frac{a+1}{2}\rfloor\lfloor\frac{z}{2}\rfloor-a-z-2k}\notag\\
&\times\prod_{i=1}^{2k+a+z}\frac{i!(x+i-2)!(2x+2i-2)_i(x+2i-1)_i(2x+3i-2)}{(x+2i-1)!(2i)!}\notag\\
&\times\prod_{i=1}^{\lfloor \frac{a-2}{3}\rfloor}[2x+6k+2f(a+i)-1]_{f(a-3i+1)-1}\prod_{i=1}^{\lfloor \frac{a-1}{3}\rfloor}(x+3k+f(a+i-1)+1)_{f(a-3i+2)-1}\notag\\
&\times\prod_{i=1}^{\lfloor \frac{a+1}{2}\rfloor}[2x+6k+2z+4a+4i-3]_{\lfloor\frac{z+1}{2}\rfloor +a-5i+4}(x+3k+z+2a+2i-1)_{\lfloor\frac{z}{2}\rfloor+a-5i+4}\notag\\
&\times\prod_{i=1}^{\lfloor\frac{a}{2}\rfloor}[2x+6k+2z+4a+4i-1]_{\lfloor\frac{z}{2}\rfloor+a-5i+2} (x+3k+z+2a+2i)_{\lfloor\frac{z+1}{2}\rfloor+a-5i+1}\notag\\
%&\frac{1}{\prod_{i=1}^{a}[2x+4k+2a+2i-1]_{z+a-2i+1}(x+4k+z+2a+i)_{z+a-2i+1}}\notag\\
&\times\prod_{i=1}^{a}\frac{[2k+2i+1]_{z+a-2i+1}(k+i)_{z+a-2i+1}}{(i)_{z+a-2i+1}[2x+4k+2a+2i-1]_{z+a-2i+1}(x+4k+z+2a+i)_{z+a-2i+1}}.
\end{align}

The final $E$-product is defined as
\begin{align}\label{E4aform}
E_4(x,0,z,a)=&\frac{1}{2^{a+z-1}}\prod_{i=1}^{a-1}\frac{i!(x+i-2)!(2x+2i-2)_i(x+2i-1)_i(x+3i-1)}{(x+2i-1)!(2i)!}\notag\\
&\times\prod_{i=1}^{z-1}\frac{i!(x+3a+i-2)!(2x+6a+2i-2)_i(x+3a+2i-1)_i(x+3a+3i-1)}{(x+3a+2i-1)!(2i)!},
\end{align}
and for positive $y$
\begin{equation}\label{E4bform}
E_4(x,y,z,a)= \frac{E_3(x,y,z,a)}{K(x,y,z,a)},
\end{equation}
where
\begin{align}\label{E4cform}
K(x,2k,z,a)= &2^{k+\lfloor \frac{z+1}{2}\rfloor+\lfloor \frac{a+1}{2}\rfloor-1}\notag\\
&\times\frac{\prod_{i=1}^{k+\lfloor \frac{z}{2}\rfloor+a}(2x+6i-5) \prod_{i=1}^{\lfloor \frac{z+1}{2}\rfloor}(x+3k+3a+3i-4)}{\prod_{i=1}^{\lfloor \frac{a+1}{2}\rfloor}(2x+6k+6\lfloor \frac{a}{2}\rfloor+6i-5) \prod_{i=1}^{k+z+\lfloor \frac{a}{2}\rfloor}(2x+3k+3\lfloor \frac{a+1}{2}\rfloor+3i-4)},
\end{align}
and
\begin{align}\label{E4dform}
K(x,2k+1,z,a)= &2^{k+\lfloor \frac{z}{2}\rfloor+\lfloor \frac{a}{2}\rfloor}\notag\\
&\times\frac{\prod_{i=1}^{k+\lfloor \frac{z+1}{2}\rfloor+a}(2x+6i-5) \prod_{i=1}^{\lfloor \frac{z}{2}\rfloor}(x+3k+3a+3i-1)}{\prod_{i=1}^{\lfloor \frac{a}{2}\rfloor}(2x+6k+6\lfloor \frac{a+1}{2}\rfloor+6i-5) \prod_{i=1}^{k+z+\lfloor \frac{a+1}{2}\rfloor}(x+3k+3\lfloor \frac{a}{2}\rfloor+3i-1)}.
\end{align}

\bigskip

We are now ready to state our second main result:

\begin{thm}\label{main2} Assume that $y,t,a,x$ are non-negative integers. Then
\begin{align}\label{main2eq1}
\CS&(\overline{\mathcal{H}}_{2t+1,y}(2a,2x))=2^{2t+4a+1}E_1(x+1,y-1,t-y+2,a)E_2(x+1,y,t-y+1,a),
\end{align}
\begin{align}\label{main2eq2}
\CS&(\overline{\mathcal{H}}_{2t,y}(2a,2x))=2^{2t+4a}E_1(x+1,y-1,t-y+1,a)E_2(x+1,y,t-y+1,a),
\end{align}
\begin{align}\label{main2eq3}
\CS&(\overline{\mathcal{H}}_{2t+1,y}(2a,2x+1))=2^{2t+4a+1}E_3(x+2,y-1,t-y+2,a)E_4(x+1,y,t-y+1,a),
\end{align}
and
\begin{align}\label{main2eq3}
\CS(\overline{\mathcal{H}}_{2t,y}(2a,2x+1))=2^{2t+4a}E_3(x+2,y-1,t-y+1,a)E_4(x+1,y,t-y+1,a).
\end{align}
\end{thm}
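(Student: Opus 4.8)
The plan is to follow the three-stage strategy announced in Section~\ref{intro}: first reduce the count of cyclically symmetric tilings of $\overline{\mathcal{H}}_{t,y}(a,x)$ to the number of lozenge tilings of a fundamental domain occupying one third of the region; then split that domain by Ciucu's factorization theorem \cite{CiuRef}; and finally substitute the enumerations of the resulting one-sixth regions, which are established in Section~\ref{lemmas} by means of Kuo's graphical condensation \cite{Kuo}. The proof of Theorem~\ref{main1} proceeds along the same lines, with $\mathcal{H}_{t,y}(a,x)$ in place of $\overline{\mathcal{H}}_{t,y}(a,x)$ and the $P$- and $F$-products in place of the $E$-products.

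Step~1 is the reduction to a fundamental domain. The region $\overline{\mathcal{H}}_{t,y}(a,x)$ carries a $C_3$ rotational symmetry about the centroid of its central triangular hole, and, since the outer hexagon together with all four holes is placed symmetrically about the north--south line, it also has a vertical reflective symmetry; these generate a dihedral group of order six. I would cut $\overline{\mathcal{H}}_{t,y}(a,x)$ along three rays emanating from the center that are permuted cyclically by the $120^{\circ}$ rotation and chosen so that one of them lies on the vertical axis; the piece $D$ that is mapped to itself by the reflection is then a reflection-symmetric subregion of the triangular lattice in its own right. A $120^{\circ}$-invariant tiling is determined by its restriction to $D$, so $\CS(\overline{\mathcal{H}}_{t,y}(a,x))=\M(D)$ once one records which unit triangles along the cutting rays and along the boundary of the central hole are forced by the symmetry. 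The precise combinatorial shape of $D$ --- which of those triangles are forced and which remain free --- depends on the parities of $t$ and of $x$, with $a$ taken even throughout (which is why no product formula is expected for odd $a$), and this dichotomy is exactly what produces the four separate identities of Theorem~\ref{main2}.

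Step~2 applies Ciucu's factorization theorem (recalled in Section~\ref{background}) to the dual graph of $D$ with its symmetry axis, yielding $\M(D)=2^{m}\,\M(D^{+})\,\M(D^{-})$, where $m$ is an explicit exponent determined by the vertices of $D$ on the axis and $D^{\pm}$ are the two weighted halves obtained by cutting $D$ along the axis. The task is then to identify $D^{+}$ and $D^{-}$ with the one-sixth regions treated in Section~\ref{lemmas}: in the two cases where the central hole has even side ($2x$) they should match, after the substitutions $x\mapsto x+1$, $z\mapsto t-y+2$ or $z\mapsto t-y+1$, and the $y$-shifts shown in the statement, the regions whose matching numbers are $E_1$ and $E_2$ of \eqref{E1aform}--\eqref{E2bform}; in the two cases where the central hole has odd side ($2x+1$) they should match the regions with matching numbers $E_3$ and $E_4$ of \eqref{E3aform}--\eqref{E4dform}. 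Substituting the lemma formulas for $\M(D^{\pm})$ and collecting the powers of two --- those coming from Ciucu's theorem together with the ones already built into the definitions of $E_1,\dots,E_4$ --- should then reproduce the prefactor $2^{2t+4a+1}$, respectively $2^{2t+4a}$. The lemmas themselves I would prove by setting up a Kuo-type condensation recurrence for the relevant one-sixth regions, checking the base cases, and verifying that the proposed $E$-products satisfy the same recurrence, the lengthy but routine algebra of that last step being what is deferred to Section~\ref{verification}.

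I expect the genuine difficulty to lie in Step~1 together with the parameter bookkeeping of Step~2: pinning down the exact shape of $D$ and of its halves $D^{\pm}$ in each of the four parity cases, including the lozenges forced near the central hole, and then exhibiting the correct dictionary of argument shifts that turns $D^{\pm}$ into the regions enumerated in Section~\ref{lemmas}. Once that dictionary is in hand, all that remains is the purely algebraic identity stating that $2^{m}$ times the product of the two lemma formulas equals the claimed right-hand side, which is a standard manipulation of Pochhammer and skipped-Pochhammer products and carries no conceptual content. A minor additional point is the degenerate regime flagged in the introduction --- where the distance between the central hole and a satellite hole is too small for any tiling to exist --- in which one must check that both sides of each identity vanish consistently.
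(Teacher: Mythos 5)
Your proposal follows essentially the same route as the paper: the paper formalizes your Step~1 as passing to the orbit graph $\Orb(G)$ of the dual graph under the $120^{\circ}$ rotation (whose perfect matchings are exactly the $r$-invariant matchings), then applies Ciucu's factorization theorem to the vertical symmetry axis of $\Orb(G)$ and identifies the two halves with the regions $\mathcal{E}$, ${}_*^*\mathcal{E}$, ${}_*\mathcal{E}$, ${}^*\mathcal{E}$ enumerated in Section~\ref{lemmas}, exactly as in your Step~2. The approach and the division of labor (parity case analysis, parameter dictionary, Kuo-condensation lemmas deferred to Sections~\ref{lemmas} and \ref{verification}) match the paper's proof.
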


\bigskip

We are also now able to give formulas for the number of tilings of $\mathcal{H}$- and $\overline{\mathcal{H}}$-type regions invariant under a rotation of $120^{\circ}$ and a reflection over the vertical symmetry axis. Denote by $\CSTC(\mathcal{R})$ the number of tilings of this type of a region $\mathcal{R}$.

\begin{thm}\label{main3} For non-negative integers $y,t,a,x$
\begin{align}
\CSTC(\mathcal{H}_{2t,y}(2a,2x))=P_1(x+1,y,t-y-1,a).
\end{align}
\end{thm}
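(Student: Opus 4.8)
The plan is to realize a $\CSTC$ tiling of $\mathcal{H}_{2t,y}(2a,2x)$ as a tiling of a fundamental domain of the order--$6$ dihedral group acting on the region, and then to recognize that domain as one of the one--sixth regions whose tiling numbers are already computed in Section~\ref{lemmas}. First I would fix the symmetry picture: the defected hexagon $\mathcal{H}_{2t,y}(2a,2x)$ has alternating side-lengths $2t+2x+6a,\,2t,\,2t+2x+6a,\,2t,\,2t+2x+6a,\,2t$, and together with the symmetric placement of the central $2x$-triangle, the three satellite $2a$-triangles, and the three distances $2y$, it is invariant under the dihedral group $D_3$ generated by the $120^{\circ}$ rotation about the center and the reflection across the vertical symmetry axis. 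A fundamental domain is a $60^{\circ}$ wedge $\mathcal{F}$ bounded by two adjacent reflection axes of the hexagon, and a $\CSTC$ tiling is completely determined by its restriction to $\mathcal{F}$. As in the classical treatment of symmetry classes, some lozenges straddling the two bounding rays of $\mathcal{F}$ are forced; these are precisely the shaded lozenges of Figure~\ref{Onethird5}, and since $2t,2a,2x$ are all even the relevant parity obstructions vanish, so these forced lozenges occur in \emph{every} $\CSTC$ tiling. Deleting the forced lozenges and cutting along the two axes produces a simply-connected sub-region $\mathcal{F}'$ of the triangular lattice, and the restriction map gives a bijection between $\CSTC$ tilings of $\mathcal{H}_{2t,y}(2a,2x)$ and lozenge tilings of $\mathcal{F}'$. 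Hence $\CSTC(\mathcal{H}_{2t,y}(2a,2x))=\M(\mathcal{F}')$.

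Next I would connect $\mathcal{F}'$ to the regions handled in the proof of Theorem~\ref{main1}. Recall how (\ref{main1eq2}) is obtained: the cyclically symmetric tilings of $\mathcal{H}_{2t,y}(2a,2x)$ are in bijection with the lozenge tilings of the one--third rotational fundamental domain $\mathcal{T}$; this $\mathcal{T}$ inherits the vertical symmetry axis of the hexagon, and Ciucu's factorization theorem \cite{CiuRef} writes $\M(\mathcal{T})$ as a power of $2$ times $\M(\mathcal{T}^{+})\,\M(\mathcal{T}^{-})$, where $\mathcal{T}^{+}$ and $\mathcal{T}^{-}$ are the two one--sixth regions whose tiling numbers are evaluated in Section~\ref{lemmas} and, after collecting the powers of $2$, yield the factors $P_1(x+1,y,t-y-1,a)$ and $P_2(x+1,y,t-y,a)$ of (\ref{main1eq2}). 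The point is that the reflection-symmetric tilings of $\mathcal{T}$ — which are exactly the restrictions to $\mathcal{T}$ of the $\CSTC$ tilings of the hexagon — are in bijection (with no extra power of $2$) with the lozenge tilings of the half $\mathcal{T}^{+}$, just as reflection-symmetric tilings of a region are counted by one of the two halves in Ciucu's factorization. Thus $\mathcal{F}'$ is precisely this $\mathcal{T}^{+}$, and invoking the corresponding lemma of Section~\ref{lemmas} gives $\CSTC(\mathcal{H}_{2t,y}(2a,2x))=\M(\mathcal{F}')=\M(\mathcal{T}^{+})=P_1(x+1,y,t-y-1,a)$. As a consistency check, this is compatible with (\ref{main1eq2}): $\CS=2^{2t+4a}P_1P_2$, with the reflection-symmetric factor being exactly $P_1(x+1,y,t-y-1,a)$.

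The main obstacle is entirely in the first paragraph: describing the wedge $\mathcal{F}$ and the attendant forced lozenges carefully, and checking that after their removal the region $\mathcal{F}'$ coincides \emph{on the nose} — side-lengths, the positions into which the central $2x$-hole and the three $2a$-holes split, the bumps and indentations along the two cut rays, and the way the distances $2y$ are inherited — with the one--sixth region of Section~\ref{lemmas} evaluated as $P_1(x+1,y,t-y-1,a)$. This is bookkeeping, but it is the step where a parity slip or an off-by-one in one of the arguments $(x+1,y,t-y-1,a)$ would most easily enter; once the region is matched, the enumeration is already done in the lemmas and the factorization-theorem input is standard. This also explains why only the single equation for $\mathcal{H}_{2t,y}(2a,2x)$ appears: the other $\mathcal{H}$- and $\overline{\mathcal{H}}$-type regions either fail the parity condition for admitting a $\CSTC$ tiling, or the corresponding half-region is not one that was shown to have a simple product formula.
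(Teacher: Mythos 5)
Your proposal is correct and follows essentially the same route as the paper: the paper's proof simply observes (via Figure \ref{Onethird5}) that the forced shaded lozenges cut the hexagon into six congruent pieces, each of which reduces to $\mathcal{R}_{x+1,y,t-y-1}(a)$ after removing forced lozenges, and then invokes Theorem \ref{R1thm}. One small caution: your appeal to ``reflection-symmetric tilings are counted by one of the two halves in Ciucu's factorization'' is not a general theorem (Lemma \ref{ciucuthm} only gives $\M(G)=2^k\M(G^+)\M(G^-)$ and says nothing about symmetric matchings), but this detour is dispensable, since your first paragraph already establishes the bijection with the one-sixth region directly, which is exactly what the paper does.
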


\begin{thm}\label{main4} For non-negative integers $y,t,a,x$
\begin{align}
\CSTC(\overline{\mathcal{H}}_{2t,y}(2a,2x))=E_1(x+1,y-1,t-y+1,a).
\end{align}
\end{thm}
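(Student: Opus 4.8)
The plan is to reduce $\CSTC(\overline{\mathcal{H}}_{2t,y}(2a,2x))$ to counting ordinary lozenge tilings of a fundamental domain and then to quote the enumeration of that domain from Section \ref{lemmas}, observing that the answer is one of the two factors already produced in the proof of Theorem \ref{main2}.

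First I would exploit the dihedral symmetry. The hexagon $\overline{\mathcal{H}}_{2t,y}(2a,2x)$ is invariant under the order‑six group generated by the $120^{\circ}$ rotation about the center and the reflection over the vertical axis, and this group cuts the region into six congruent pieces. A tiling invariant under the whole group is determined by its restriction to one such piece $G$; conversely, once the shaded lozenges of Figure \ref{Onethird5} are forced (which, together with the evenness already encoded in the notation $\overline{\mathcal{H}}_{2t,y}(2a,2x)$, is exactly the condition for a $\CSTC$-tiling to exist), every tiling of $G$ extends uniquely to a group‑invariant tiling. Hence $\CSTC(\overline{\mathcal{H}}_{2t,y}(2a,2x))=\M(G)$. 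I would make this precise by passing to the triangular‑lattice dual graph, where $G$ (with the half‑lozenges along its two bounding rays accounted for) is exactly one of the ``one‑sixth'' regions whose perfect matchings are enumerated in Section \ref{lemmas}.

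Next I would identify the resulting count with $E_1$, and I expect this to be consistent with Theorem \ref{main2}. In the proof of (\ref{main2eq2}) one quotients $\overline{\mathcal{H}}_{2t,y}(2a,2x)$ by the rotation to obtain a one‑third region $Q$; its residual reflective symmetry together with Ciucu's factorization theorem (Section \ref{background}) gives $\M(Q)=2^{2t+4a}\,\M(Q^{+})\,\M(Q^{-})$ with $\M(Q^{+})=E_1(x+1,y-1,t-y+1,a)$ and $\M(Q^{-})=E_2(x+1,y,t-y+1,a)$. The half $Q^{+}$ is precisely our region $G$, and the reflectively symmetric tilings of $Q$ — equivalently, the tilings of the hexagon invariant under the full order‑six group — are counted by $\M(Q^{+})$. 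Either route yields
\[
\CSTC(\overline{\mathcal{H}}_{2t,y}(2a,2x))=E_1(x+1,y-1,t-y+1,a),
\]
where one uses the branch (\ref{E1aform}) of $E_1$ when $y=0$ and the branch (\ref{E1bform}) when $y>0$, matched by the corresponding lemma in Section \ref{lemmas}.

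The main obstacle will be the bookkeeping that makes the identification $\CSTC(\overline{\mathcal{H}}_{2t,y}(2a,2x))=\M(G)$ watertight: one must pin down which half‑lozenges lie along the two bounding rays of $G$, verify that the forced lozenges in Figure \ref{Onethird5} coincide both with the boundary data built into the lemma's region and with the boundary condition that Ciucu's factorization theorem assigns to the $Q^{+}$ side (so that it really is $Q^{+}$, not $Q^{-}$, whose tilings correspond to the symmetric ones), and confirm that the evenness hypotheses are exactly the conditions under which $G$ admits a tiling. Should $G$ differ from the lemma's region by a strip of forced lozenges, I would strip those off and shift parameters before invoking the lemma. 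Modulo this identification the theorem requires no new computation, since the product $E_1(x+1,y-1,t-y+1,a)$ has already been evaluated in the course of proving Theorem \ref{main2}.
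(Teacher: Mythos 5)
Your primary argument --- cutting $\overline{\mathcal{H}}_{2t,y}(2a,2x)$ into six congruent fundamental domains via the order-six symmetry group, noting the forced shaded lozenges of Figure \ref{Onethird5}, stripping the remaining forced lozenges to recover $\mathcal{E}_{x+1,y-1,t-y+1}(a)$, and invoking Theorem \ref{E1thm} --- is exactly the paper's proof. The alternative route via symmetric matchings of the one-third orbit graph is unnecessary (and is not directly justified by Lemma \ref{ciucuthm}, which counts all matchings of that graph rather than the reflectively symmetric ones), but your main line of argument is correct and coincides with the paper's.
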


\section{Preliminaries}\label{background}
Lozenges in a region can carry weights. We define the \emph{weight} of a tiling\footnote{In this paper we only consider regions on the triangular lattice. Thus, from now on, we use `\emph{tiling(s)}' to mean `\emph{lozenge tiling(s)}.'} of a region $\mathcal{R}$ to be the product of the weights of all lozenges in the tiling. We use the notation $\M(\mathcal{R})$ for the sum of the weights of all tilings of the region $\mathcal{R}$, and call it the \emph{tiling generating function} (or TGF) of $\mathcal{R}$. In the unweighted case, $\M(\mathcal{R})$ is exactly the number of tilings of $\mathcal{R}$.

The \emph{(planar) dual graph} of a region $\mathcal{R}$ is the graph $G$ whose vertices are unit equilateral triangles in $\mathcal{R}$ and whose edges connect precisely two unit equilateral triangles sharing an edge in $\mathcal{R}$. An edge of the dual graph $G$ has the same weight as its corresponding lozenge in $\mathcal{R}$.  A \emph{perfect matching} of a graph is a collection of disjoint edges that covers  all the vertices of the graph. We also define the weight of a perfect matching to be the product of the weights of its constituent edges. There is a natural weight-preserving bijection between tilings of a region $\mathcal{R}$ and perfect matchings of its dual graph $G$ (see Figures \ref{bijection}(b) and (c); each lozenge tiling of a hexagon in (b) corresponds to a red perfect matching of the honeycomb graph in (c)). In view of this, we use the notation $\M(G)$ for the sum of the weights of all perfect matchings of a graph $G$, and call it the \emph{matching generating function} of $G$. In the unweighted case, the sum $\M(G)$ is just the number of perfect matchings of the graph $G$.

A \emph{forced lozenge} in a region $\mathcal{R}$ is a lozenge that is contained in any tiling of $R$. If we remove several forced lozenges $l_1,l_2,\dots,l_n$ from $\mathcal{R}$ to obtain a new region $\mathcal{R}'$, then
\begin{equation}\label{forceeq}
\M(\mathcal{R})=\M(\mathcal{R}')\cdot \prod_{i=1}^{n}\wt(l_i),\end{equation}
where $\wt(l_i)$ denotes the weight of the lozenge $l_i$.

Next, we provide a generalization of the above equality (\ref{forceeq}).

The  unit equilateral triangles in a region $\mathcal{R}$ appear in two orientations: up-pointing or down-pointing. It is easy to see that a region accepts a tiling only if it has the same number of up-pointing and down-pointing unit triangles. If a region satisfies this condition, we say that it is \emph{balanced}.  The next lemma was first introduced in \cite{Tri2}.

\begin{lem}\label{RS}
Let $\mathcal{Q}$ be a subregion of a region $\mathcal{R}$. Assume that $\mathcal{Q}$ satisfies the following conditions:
\begin{enumerate}
\item[(1)] On each side of the boundary of $\mathcal{Q}$, the unit triangles having edges on the boundary are all up-pointing or all down-pointing.
\item[(2)] $\mathcal{Q}$ is balanced.
\end{enumerate}
Then $\M(\mathcal{R})=\M(\mathcal{Q}) \cdot \M(\mathcal{R}-\mathcal{Q})$.
\end{lem}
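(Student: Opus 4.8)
The plan is to argue that the conditions on $\mathcal{Q}$ force every tiling of $\mathcal{R}$ to split as a tiling of $\mathcal{Q}$ together with a tiling of $\mathcal{R}-\mathcal{Q}$, after which the multiplicativity of the weight over disjoint pieces gives the product formula. The key point to establish is that no lozenge of a tiling of $\mathcal{R}$ can straddle the boundary of $\mathcal{Q}$. First I would fix a tiling $T$ of $\mathcal{R}$ and consider a side $s$ of the boundary of $\mathcal{Q}$. By condition (1), all unit triangles of $\mathcal{R}$ lying immediately on one fixed side of $s$ and having an edge along $s$ are of the same orientation — say, without loss of generality, they are the up-pointing triangles just outside $\mathcal{Q}$ (the down-pointing ones just inside). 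A lozenge of $T$ crossing $s$ would have to pair such an up-pointing triangle outside $\mathcal{Q}$ with a down-pointing triangle inside $\mathcal{Q}$ across the edge on $s$. I would then walk along $s$ and show by a parity/counting argument that these crossing lozenges cannot all be accommodated: each down-pointing triangle of $\mathcal{Q}$ adjacent to $s$ is, in $T$, either covered by a lozenge lying entirely in $\mathcal{Q}$ or by a crossing lozenge, and similarly for the up-pointing triangles of $\mathcal{R}-\mathcal{Q}$ adjacent to $s$.

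The cleanest way to finish is a global balance count rather than a local one. Restrict $T$ to $\mathcal{Q}$: the lozenges of $T$ that lie entirely inside $\mathcal{Q}$ cover an equal number of up- and down-pointing triangles of $\mathcal{Q}$, so the triangles of $\mathcal{Q}$ left uncovered (those meeting a crossing lozenge) also split evenly between the two orientations, because $\mathcal{Q}$ is balanced by condition (2). But condition (1) says that on any single side of $\partial\mathcal{Q}$, all the boundary-adjacent triangles of $\mathcal{Q}$ that could meet a crossing lozenge through that side have the \emph{same} orientation; crossing lozenges only ever meet $\mathcal{Q}$ along its boundary. Summing the contributions of crossing lozenges side by side, I would show that the only way the uncovered triangles of $\mathcal{Q}$ can be orientation-balanced is for there to be no crossing lozenges at all — an imbalance is created on each side individually that cannot be canceled, since a crossing lozenge through a given side removes a triangle of a prescribed orientation from $\mathcal{Q}$ and there is no side contributing the opposite orientation on the $\mathcal{Q}$-side. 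Hence every lozenge of $T$ lies entirely in $\mathcal{Q}$ or entirely in $\mathcal{R}-\mathcal{Q}$.

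Once this is known, the restriction map $T\mapsto (T\cap\mathcal{Q},\, T\cap(\mathcal{R}-\mathcal{Q}))$ is well defined and is a bijection onto pairs consisting of a tiling of $\mathcal{Q}$ and a tiling of $\mathcal{R}-\mathcal{Q}$: it is injective trivially, and surjective because gluing any tiling of $\mathcal{Q}$ to any tiling of $\mathcal{R}-\mathcal{Q}$ produces a tiling of $\mathcal{R}$ (the two regions partition $\mathcal{R}$). Since $\wt(T)=\wt(T\cap\mathcal{Q})\cdot\wt(T\cap(\mathcal{R}-\mathcal{Q}))$, summing over all tilings gives $\M(\mathcal{R})=\M(\mathcal{Q})\cdot\M(\mathcal{R}-\mathcal{Q})$. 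Note this also covers the degenerate cases: if either $\mathcal{Q}$ or $\mathcal{R}-\mathcal{Q}$ admits no tiling, then neither does $\mathcal{R}$ and both sides vanish.

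The main obstacle is the "no crossing lozenge" step: making the orientation/balance bookkeeping along $\partial\mathcal{Q}$ rigorous, especially handling sides of $\partial\mathcal{Q}$ in all three lattice directions simultaneously and being careful that condition (1) is applied to the correct layer of triangles (the ones on the $\mathcal{Q}$-side of each boundary edge). Everything after that is a routine bijection-plus-weight argument, essentially the same as the proof of the forced-lozenge identity \eqref{forceeq}.
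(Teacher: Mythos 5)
Your overall strategy --- show that no lozenge of a tiling of $\mathcal{R}$ crosses the border between $\mathcal{Q}$ and $\mathcal{R}-\mathcal{Q}$, and then observe that restriction gives a weight-preserving bijection onto pairs consisting of a tiling of $\mathcal{Q}$ and one of $\mathcal{R}-\mathcal{Q}$ --- is the right one, and your final bijection-plus-multiplicativity step is fine. The gap is in the ``no crossing lozenge'' step, and it stems from how you read condition (1). You interpret ``each side of the boundary of $\mathcal{Q}$'' as each maximal straight segment of $\partial\mathcal{Q}$, so that different segments may carry $\mathcal{Q}$-triangles of different orientations; your balance argument then hinges on the unproved assertion that ``there is no side contributing the opposite orientation on the $\mathcal{Q}$-side.'' That assertion does not follow from the per-segment reading, and without it the bookkeeping genuinely fails: if segment $s_1$ carries up-pointing border triangles of $\mathcal{Q}$ and segment $s_2$ carries down-pointing ones, then $k$ crossings through $s_1$ together with $k$ crossings through $s_2$ leave the uncovered triangles of $\mathcal{Q}$ perfectly orientation-balanced, so condition (2) cannot force $k=0$. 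Indeed the lemma is \emph{false} under that reading: take $\mathcal{R}=\mathcal{H}_{1,1,1}$ (six unit triangles around a point) and let $\mathcal{Q}$ be one lozenge inside it. Then $\mathcal{Q}$ is balanced and each of the four unit sides of the rhombus $\mathcal{Q}$ meets exactly one triangle of $\mathcal{Q}$, so the per-segment condition holds trivially; yet $\M(\mathcal{R})=2$ while $\M(\mathcal{Q})\,\M(\mathcal{R}-\mathcal{Q})=1$.

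The hypothesis must therefore be read the other way: ``on each side of the boundary'' refers to the $\mathcal{Q}$-side and the $(\mathcal{R}-\mathcal{Q})$-side of the border, i.e.\ \emph{all} unit triangles of $\mathcal{Q}$ having an edge on the border with $\mathcal{R}-\mathcal{Q}$ share one common orientation (equivalently, all such triangles of $\mathcal{R}-\mathcal{Q}$ share the opposite one). This is the Separating Condition of the Region-Splitting Lemma of \cite{Tri2}, which the present paper cites rather than reproves. With that reading your own balance count closes in one line and no segment-by-segment walk is needed: every crossing lozenge covers exactly one border triangle of $\mathcal{Q}$, hence removes an up-pointing (say) triangle from $\mathcal{Q}$ and never a down-pointing one; the lozenges of $T$ lying entirely inside $\mathcal{Q}$ cover equally many triangles of each orientation, so balancedness of $\mathcal{Q}$ forces the number of crossing lozenges to be zero. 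You should also restrict attention to the portion of $\partial\mathcal{Q}$ interior to $\mathcal{R}$ (edges of $\partial\mathcal{Q}$ lying on $\partial\mathcal{R}$ cannot be crossed anyway), a point your write-up glosses over. Everything after the no-crossing claim is correct as you have it.
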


One of our main tools is the following powerful \emph{graphical condensation} of Kuo \cite{Kuo}, that is usually referred to as \emph{Kuo condensation}.

\begin{lem}[Theorem 5.3 in \cite{Kuo}]\label{Kuothm}
Assume that $G=(V_1,V_2,E)$ is a weighted bipartite planar graph with $|V_1|=|V_2|+1$. Assume that $u,v,w,s$ are four vertices appearing in this cyclic order on a face of $G$, such that $u,v,w\in V_1$ and $s\in V_2$. Then
\begin{equation}\label{Kuoeq}
\M(G-\{v\})\M(G-\{u,w,s\})=\M(G-\{u\})\M(G-\{v,w,s\})+\M(G-\{w\})\M(G-\{u,v,s\}).
\end{equation}
\end{lem}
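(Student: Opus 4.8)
The plan is to prove the identity \eqref{Kuoeq} combinatorially, by a weight-preserving superposition argument that matches the two sides contribution by contribution over pairs of perfect matchings. Observe first that each of the three products in \eqref{Kuoeq} is the generating function for a weighted pair $(M_1,M_2)$ of perfect matchings of $G$ whose union covers every vertex except those in $\{u,v,w,s\}$; the three products differ only in how this four-element missing set is split between the two factors, namely $\{v\}\mid\{u,w,s\}$ for the left-hand side, $\{u\}\mid\{v,w,s\}$ for the first term on the right, and $\{w\}\mid\{u,v,s\}$ for the second. I would therefore fix an underlying superposition and show that it contributes the same total weight to the left-hand side as to the right-hand side; summing over all superpositions then yields \eqref{Kuoeq}.

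Next I would analyze the structure of the superposition $M_1\cup M_2$, regarded as the multiset of edges recording which matching(s) contain each edge. Its connected components are doubled edges, alternating cycles, and alternating paths, and the vertices of degree one are exactly $u,v,w,s$; hence there are precisely two alternating paths, whose four endpoints are $u,v,w,s$. Since $G$ is bipartite with $u,v,w\in V_1$ and $s\in V_2$, the unique path ending at $s$ has odd length, while the path joining the two remaining $V_1$-vertices has even length. The planarity hypothesis enters here: because $u,v,w,s$ lie in this cyclic order on a single face, two vertex-disjoint paths cannot cross, which rules out the interleaving pairing $\{u,w\},\{v,s\}$ and leaves exactly the two non-crossing pairings $\{u,v\},\{w,s\}$ and $\{v,w\},\{u,s\}$.

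The heart of the argument is then a counting of labelings for a fixed underlying superposition $U$. Because the edge multiset of $U$ is fixed, every pair $(M_1,M_2)$ giving rise to $U$ has the same weight $\prod_{e\in U}\wt(e)$, and each alternating cycle may be two-colored independently, contributing a common factor $2^{c}$ (with $c$ the number of cycles) to all three products. Each of the two alternating paths admits exactly two alternating colorings, and I would track, for each coloring, which endpoint is left uncovered by $M_1$ and which by $M_2$: an even path leaves its two endpoints to different matchings, while an odd path leaves both endpoints to the same matching. Running through the four path-colorings for the pairing $\{u,v\},\{w,s\}$, two realize the split $\{v\}\mid\{u,w,s\}$ and two realize $\{u\}\mid\{v,w,s\}$, with none realizing $\{w\}\mid\{u,v,s\}$; symmetrically, the pairing $\{v,w\},\{u,s\}$ yields two colorings for $\{v\}\mid\{u,w,s\}$ and two for $\{w\}\mid\{u,v,s\}$, with none for $\{u\}\mid\{v,w,s\}$. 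In either case the superposition $U$ contributes equal total weight $2^{c+1}\prod_{e\in U}\wt(e)$ to the left-hand side and to the right-hand side, and summing over all admissible $U$ proves \eqref{Kuoeq}.

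The main obstacle, and the only genuinely non-formal step, is the pair of structural facts in the middle: the planarity/non-crossing argument excluding the interleaving pairing, together with the bipartite parity bookkeeping that forces pairing $\{u,v\},\{w,s\}$ to feed only the first right-hand term and pairing $\{v,w\},\{u,s\}$ only the second. Everything else is a routine check that the four path-colorings distribute as claimed; no algebra is required, and the argument is precisely the combinatorial content of Theorem~5.3 in \cite{Kuo}.
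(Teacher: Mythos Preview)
The paper does not give its own proof of this lemma; it is quoted verbatim as Theorem~5.3 of \cite{Kuo} and used as a black box.  Your sketch is precisely Kuo's original superposition argument, and the strategy (overlay two matchings, classify the two alternating paths by their endpoint pairing, use planarity to exclude the interleaving pairing $\{u,w\},\{v,s\}$, then check that each admissible superposition contributes equally to both sides) is correct.

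One small slip in the bookkeeping: for a fixed superposition $U$ whose two paths pair $\{u,v\}$ and $\{w,s\}$, exactly \emph{one} of the four path-colorings gives an ordered pair $(M_1,M_2)$ with $M_1\in\mathrm{PM}(G-\{v\})$ and $M_2\in\mathrm{PM}(G-\{u,w,s\})$, and exactly one gives a pair contributing to $\M(G-\{u\})\M(G-\{v,w,s\})$; the remaining two colorings have three vertices uncovered by $M_1$ and one by $M_2$, i.e.\ they correspond to the reversed ordered products and are not counted by any term of \eqref{Kuoeq} as written.  So the common contribution of $U$ to each side is $2^{c}\prod_{e\in U}\wt(e)$, not $2^{c+1}$.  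This factor-of-two slip is harmless since it is the same on both sides, and the argument goes through unchanged.
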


The next lemma, usually called \emph{Ciucu's factorization theorem} (Theorem 1.2 in \cite{CiuRef}), allows us to write the matching generating function of a symmetric  graph as the product of the matching generating functions of two smaller graphs.

\begin{lem}[Ciucu's Factorization Theorem]\label{ciucuthm}
Let $G=(V_1,V_2,E)$ be a weighted bipartite planar graph with a vertical symmetry axis $\ell$. Assume that $a_1,b_1,a_2,b_2,\dots,a_k,b_k$ are all the vertices of $G$ on $\ell$ appearing in this order from top to bottom\footnote{It is easy to see that if $G$ admits a perfect matching, then $G$ has an even number of vertices on $\ell$.}. Assume in addition that the vertices of $G$ on $\ell$ form a cut set of $G$ (i.e. the removal of those vertices separates $G$ into two disjoint subgraphs). We reduce the weights of all edges of $G$ lying on $\ell$ by half and keep the other edge-weights unchanged. Next, we color two vertex classes of $G$ by black and white, without loss of generality, assume that $a_1$ is black. Finally, we remove all edges on the left of $\ell$ which are adjacent to a black $a_i$ or a white $b_j$; we also remove the edges  on the right of $\ell$ which are adjacent to a white $a_i$ or a black $b_j$. This way, $G$ is divided into two disjoint weighted graphs $G^+$ and $G^-$ (on the left and right of $\ell$, respectively). Then
\begin{equation}
\M(G)=2^{k}\M(G^+)\M(G^-).
\end{equation}
\end{lem}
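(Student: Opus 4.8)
The plan is to reduce the identity to a statement in linear algebra about the weighted biadjacency matrix of $G$ and then to exploit the reflective symmetry to block-diagonalize that matrix. Since $G=(V_1,V_2,E)$ is bipartite, its perfect matchings are enumerated by the weighted biadjacency matrix $N$ whose rows are indexed by the white vertices and whose columns are indexed by the black vertices, with $N_{xy}$ equal to the weight of the edge $xy$ (and $0$ if $xy\notin E$); thus $\M(G)=\operatorname{perm}(N)$. Because permanents do not factor, I would first pass to determinants: as $G$ is planar and bipartite, a Kasteleyn signing of the edges produces a signed matrix $\widetilde N$ with $\M(G)=|\det\widetilde N|$, and the same signing restricts to Kasteleyn signings of the two half-graphs. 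The whole argument then takes place at the level of $\det\widetilde N$.

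Next I would organize $N$ (and $\widetilde N$) according to the reflection $\sigma$ across $\ell$. I partition the white vertices into those on $\ell$, those strictly to the left, and their mirror images to the right, and likewise for the black vertices; I order the rows and columns as (axis, left, right), listing the right block as the $\sigma$-image of the left block. The cut-set hypothesis is used precisely here: since deleting the axis vertices disconnects left from right, there are no edges joining a left vertex directly to a right vertex, so the two ``cross'' blocks vanish and
\[
N=\begin{pmatrix} P & Q & Q\\ S & T & 0\\ S & 0 & T\end{pmatrix},
\]
where $P$ records axis–axis edges, $Q$ and $S$ the axis–side edges (equal on both sides by symmetry), and $T$ the edges inside one side. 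The symmetric/antisymmetric change of basis on the paired left/right rows and columns — formally multiplying by $\begin{pmatrix} I&I\\ I&-I\end{pmatrix}$ on each paired block — block-diagonalizes the left/right part and realizes the classical factorization $\det\begin{pmatrix} A&B\\ B&A\end{pmatrix}=\det(A+B)\det(A-B)$. After this change of basis the matrix splits into one block built from $P,Q,S,T$ (the ``symmetric'' combination, to be identified with $G^{+}$) and one block built from $T$ decorated by the axis data (the ``antisymmetric'' combination, to be identified with $G^{-}$).

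The third step is to match the two resulting determinantal blocks with the weighted biadjacency matrices of $G^{+}$ and $G^{-}$. This is exactly where the bookkeeping on the axis lives: the halving of the weights of the on-axis edges and the specific edge-removal rule (removing left edges at black $a_i$ or white $b_j$, and right edges at white $a_i$ or black $b_j$) are designed so that, vertex by vertex along $\ell$, each axis half-edge is assigned to the correct one of the two blocks and the doubled entries produced by the basis change are absorbed. Tracking the scalar factors — the determinant $(-2)^{m}$ of each $\begin{pmatrix} I&I\\ I&-I\end{pmatrix}$ transformation and the factors of $2$ introduced by halving the on-axis edge weights — and verifying that together they collapse to exactly $2^{k}$ (where $2k$ is the number of vertices of $G$ on $\ell$) is the routine but delicate part of this step.

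The step I expect to be the main obstacle is the sign reconciliation. The factorization above is a determinant identity, whereas $\M(G^{+})$ and $\M(G^{-})$ are manifestly nonnegative; I must check that the chosen Kasteleyn signing is compatible with $\sigma$, so that $\det(A+B)$ and $\det(A-B)$ agree up to sign with the determinants of genuine Kasteleyn matrices for $G^{+}$ and $G^{-}$, and that the alternating roles of the axis vertices (the reason the labels split into the two families $a_i$ and $b_j$) make the signs in the two blocks come out consistently. Once the signing is shown to respect the symmetry, taking absolute values converts the determinant identity into $\M(G)=2^{k}\,\M(G^{+})\,\M(G^{-})$, completing the proof.
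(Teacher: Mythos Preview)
The paper does not prove this lemma; it is quoted as Theorem~1.2 of \cite{CiuRef}. Ciucu's original argument there is combinatorial rather than determinantal: one analyzes directly how a perfect matching of $G$ meets the axis and sets up a weight-preserving correspondence between matchings of $G$ and pairs of matchings of $G^{+}$ and $G^{-}$, from which the factor $2^{k}$ and the halved on-axis weights emerge.

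Your Kasteleyn route has a genuine obstruction at exactly the step you flag, and it is more than delicate bookkeeping. For the signed matrix $\widetilde N$ to inherit the symmetric block form $\bigl(\begin{smallmatrix}P&Q&Q\\S&T&0\\S&0&T\end{smallmatrix}\bigr)$ you need a $\sigma$-invariant Kasteleyn orientation, and such an orientation need not exist: already for the $4$-cycle with the axis through two opposite vertices, every $\sigma$-invariant signing gives the bounded face an even number of reversed edges and so fails the Kasteleyn condition. One can instead arrange that $\sigma$ acts on the Kasteleyn matrix by a diagonal gauge, but then the two off-diagonal ``$Q$'' and ``$T$'' blocks acquire vertex-dependent signs and the correct symmetric/antisymmetric splitting must be chosen vertex by vertex along the axis; this is exactly what the $a_i$/$b_j$ rule in the statement encodes, and it is the heart of the proof rather than a residual detail. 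There is a second, related mismatch: the block diagonalization you describe places \emph{all} axis rows and columns in one block and \emph{none} in the other, whereas $G^{+}$ and $G^{-}$ each receive some of the axis vertices (the white $a_i$'s and black $b_j$'s go to $G^{+}$, the rest to $G^{-}$), so your two blocks are not even of the right sizes to be their Kasteleyn matrices.
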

See Figure \ref{Figurefactor} for an example of the construction of weighted graphs $G^+$ and $G^-$.

\begin{figure}
\setlength{\unitlength}{3947sp}%
\begingroup\makeatletter\ifx\SetFigFont\undefined%
\gdef\SetFigFont#1#2#3#4#5{%
  \reset@font\fontsize{#1}{#2pt}%
  \fontfamily{#3}\fontseries{#4}\fontshape{#5}%
  \selectfont}%
\fi\endgroup%
\resizebox{10cm}{!}{
\begin{picture}(0,0)%
\includegraphics{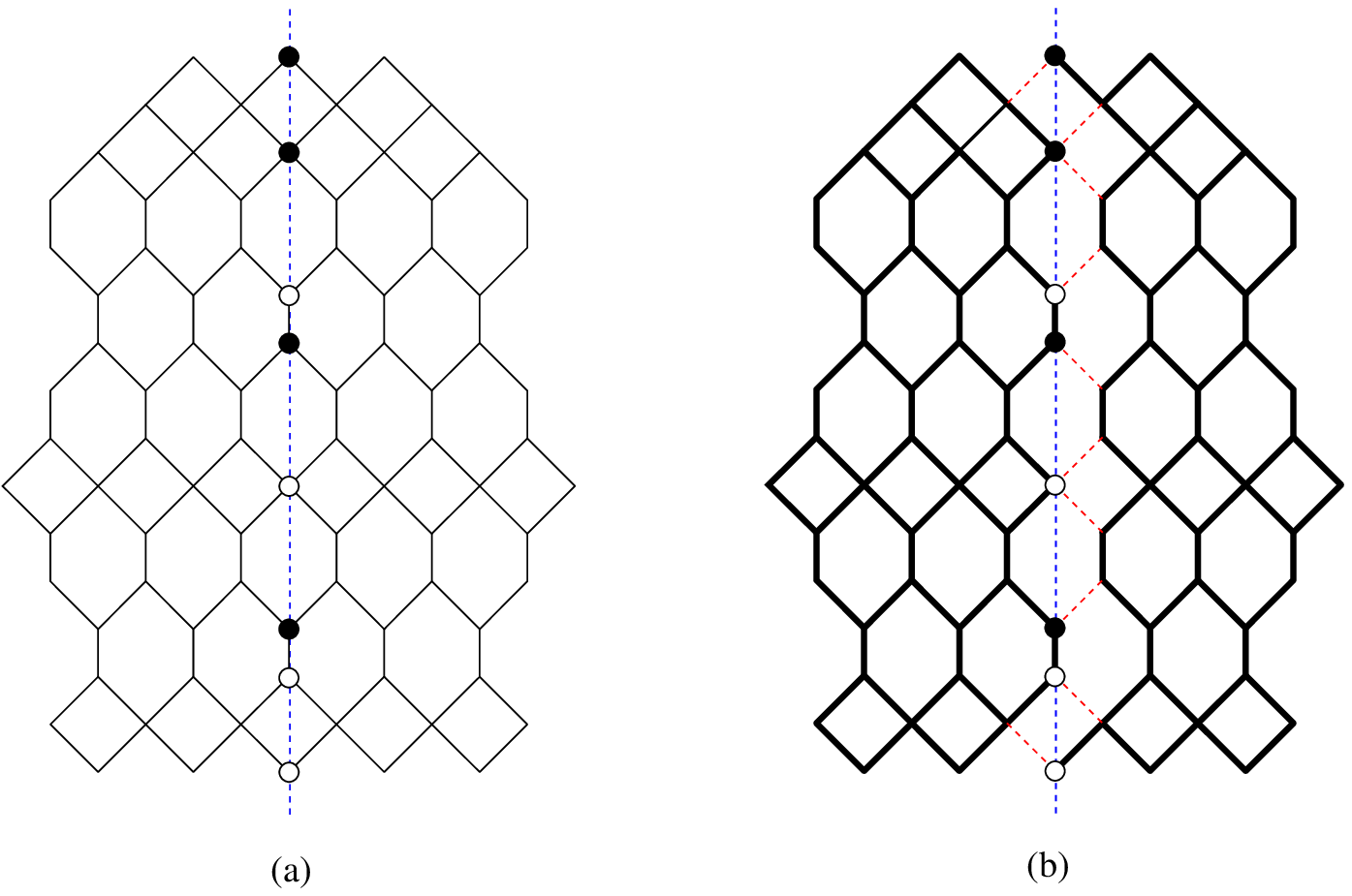}%
\end{picture}%
\begin{picture}(6803,4626)(1415,-4264)
%  METADATA <id>91</id>
\put(6992,-1478){\makebox(0,0)[rb]{\smash{{\SetFigFont{11}{13.2}{\familydefault}{\mddefault}{\updefault}{$\frac{1}{2}$}%
}}}}
%  METADATA <id>92</id>
\put(6593,162){\makebox(0,0)[rb]{\smash{{\SetFigFont{11}{13.2}{\familydefault}{\mddefault}{\updefault}{$\ell$}%
}}}}
%  METADATA <id>93</id>
\put(6992,-75){\makebox(0,0)[rb]{\smash{{\SetFigFont{11}{13.2}{\familydefault}{\mddefault}{\updefault}{$a_1$}%
}}}}
%  METADATA <id>94</id>
\put(6581,-618){\makebox(0,0)[rb]{\smash{{\SetFigFont{11}{13.2}{\familydefault}{\mddefault}{\updefault}{$b_1$}%
}}}}
%  METADATA <id>95</id>
\put(6551,-1309){\makebox(0,0)[rb]{\smash{{\SetFigFont{11}{13.2}{\familydefault}{\mddefault}{\updefault}{$a_2$}%
}}}}
%  METADATA <id>96</id>
\put(6581,-1571){\makebox(0,0)[rb]{\smash{{\SetFigFont{11}{13.2}{\familydefault}{\mddefault}{\updefault}{$b_2$}%
}}}}
%  METADATA <id>97</id>
\put(7004,-2241){\makebox(0,0)[rb]{\smash{{\SetFigFont{11}{13.2}{\familydefault}{\mddefault}{\updefault}{$a_3$}%
}}}}
%  METADATA <id>98</id>
\put(7038,-2970){\makebox(0,0)[rb]{\smash{{\SetFigFont{11}{13.2}{\familydefault}{\mddefault}{\updefault}{$b_3$}%
}}}}
%  METADATA <id>99</id>
\put(7025,-3207){\makebox(0,0)[rb]{\smash{{\SetFigFont{11}{13.2}{\familydefault}{\mddefault}{\updefault}{$a_4$}%
}}}}
%  METADATA <id>100</id>
\put(7013,-3792){\makebox(0,0)[rb]{\smash{{\SetFigFont{11}{13.2}{\familydefault}{\mddefault}{\updefault}{$b_4$}%
}}}}
%  METADATA <id>101</id>
\put(5106,-1474){\makebox(0,0)[rb]{\smash{{\SetFigFont{11}{13.2}{\familydefault}{\mddefault}{\updefault}{$G^+$}%
}}}}
%  METADATA <id>102</id>
\put(8203,-1440){\makebox(0,0)[rb]{\smash{{\SetFigFont{11}{13.2}{\familydefault}{\mddefault}{\updefault}{$G^-$}%
}}}}
%  METADATA <id>104</id>
\put(6597,-3131){\makebox(0,0)[rb]{\smash{{\SetFigFont{11}{13.2}{\familydefault}{\mddefault}{\updefault}{$\frac{1}{2}$}%
}}}}
%  METADATA <id>263</id>
\put(2799,156){\makebox(0,0)[rb]{\smash{{\SetFigFont{11}{13.2}{\familydefault}{\mddefault}{\updefault}{$\ell$}%
}}}}
%  METADATA <id>264</id>
\put(3198,-81){\makebox(0,0)[rb]{\smash{{\SetFigFont{11}{13.2}{\familydefault}{\mddefault}{\updefault}{$a_1$}%
}}}}
%  METADATA <id>265</id>
\put(2787,-624){\makebox(0,0)[rb]{\smash{{\SetFigFont{11}{13.2}{\familydefault}{\mddefault}{\updefault}{$b_1$}%
}}}}
%  METADATA <id>266</id>
\put(2757,-1315){\makebox(0,0)[rb]{\smash{{\SetFigFont{11}{13.2}{\familydefault}{\mddefault}{\updefault}{$a_2$}%
}}}}
%  METADATA <id>267</id>
\put(2787,-1577){\makebox(0,0)[rb]{\smash{{\SetFigFont{11}{13.2}{\familydefault}{\mddefault}{\updefault}{$b_2$}%
}}}}
%  METADATA <id>268</id>
\put(3210,-2247){\makebox(0,0)[rb]{\smash{{\SetFigFont{11}{13.2}{\familydefault}{\mddefault}{\updefault}{$a_3$}%
}}}}
%  METADATA <id>269</id>
\put(3244,-2976){\makebox(0,0)[rb]{\smash{{\SetFigFont{11}{13.2}{\familydefault}{\mddefault}{\updefault}{$b_3$}%
}}}}
%  METADATA <id>270</id>
\put(3231,-3213){\makebox(0,0)[rb]{\smash{{\SetFigFont{11}{13.2}{\familydefault}{\mddefault}{\updefault}{$a_4$}%
}}}}
%  METADATA <id>271</id>
\put(3219,-3798){\makebox(0,0)[rb]{\smash{{\SetFigFont{11}{13.2}{\familydefault}{\mddefault}{\updefault}{$b_4$}%
}}}}
%  METADATA <id>275</id>
\put(2042,-61){\makebox(0,0)[rb]{\smash{{\SetFigFont{11}{13.2}{\familydefault}{\mddefault}{\updefault}{$G$}%
}}}}
\end{picture}}
\caption{An illustration of Ciucu's factorization theorem. The removed edges are given by the dotted lines to the right and left of $\ell$.}\label{Figurefactor}
\end{figure}
%%%%%
\section{Enumeration of one-sixth of a hexagon}\label{lemmas}
In this section we enumerate tilings of several regions that are roughly one-sixth of our defected hexagons $\mathcal{H}_{t,y}(a,x)$ and $\overline{\mathcal{H}}_{t,y}(a,x)$, and we will employ these enumerations in our main proofs in Section \ref{proofs}.

We first consider the pentagonal region $\mathcal{G}_{n,x}$ illustrated in Figure \ref{G-region}(a), where the north side has length $x$, the south side has length $x+n-1$, the southeastern side has length $n$, and the western and the northeastern sides follow zig-zag lattice paths of length $n$. We are also interested in three weighted versions of the region $\mathcal{G}_{n,x}$ as follows. First, we assign to each vertical lozenge along the western side of the region weight $1/2$, and the resulting weighted region is denoted by ${}_*\mathcal{G}_{n,x}$ (see Figure \ref{G-region}(b); the lozenges with shaded cores are weighted by $1/2$). Second, we assign to each lozenge along the northeastern side weight $1/2$ to get the weighted region ${}^*\mathcal{G}_{n,x}$ (shown in Figure \ref{G-region}(c)). Finally, we assign weight $1/2$ to all western and northeastern boundary lozenges, and obtain the region ${}^*_*\mathcal{G}_{n,x}$ (illustrated in Figure \ref{G-region}(d)). The TGFs of the above four regions are all given by simple product formulas.

\begin{figure}
  \centering
  \includegraphics[width=12cm]{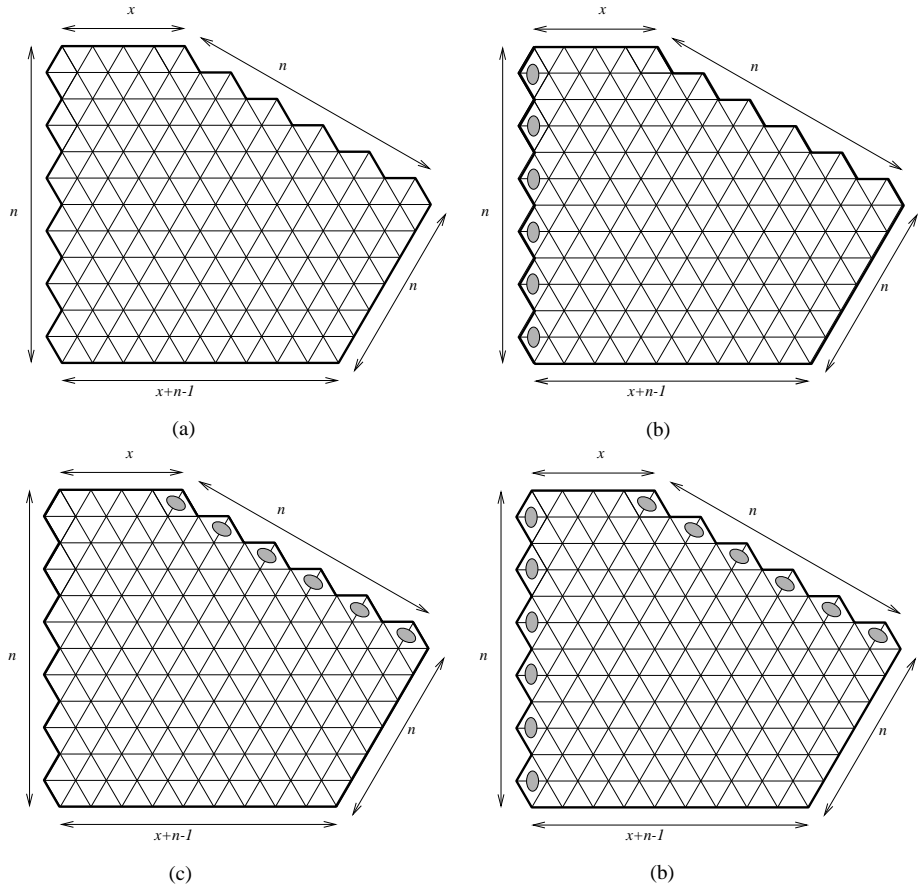}
  \caption{The regions (a) $\mathcal{G}_{6,4}$,  (b) ${}_*\mathcal{G}_{6,4}$, (c) ${}^*\mathcal{G}_{6,4}$, and (d) ${}^*_*\mathcal{G}_{6,4}$. The lozenges with shaded cores have weight $1/2$.}\label{G-region}
\end{figure}

\begin{lem}\label{Glem} For non-negative integers $x$ and $n$
\begin{equation}
\M(\mathcal{G}_{n,x})=\frac{1}{2^n}\prod_{i=1}^{n}\frac{(2x+2i)_i[2x+4i+1]_{i-1}}{(i)_i[2x+2i+1]_{i-1}},
\end{equation}
\begin{equation}
\M({}_*\mathcal{G}_{n,x})=\frac{1}{2^{n}}\prod_{i=1}^{n}\frac{i!(x+i-2)!(2x+2i-2)_i(x+2i-1)_i(2x+3i-2)}{(x+2i-1)!(2i)!},
\end{equation}
\begin{equation}
\M({}^*\mathcal{G}_{n,x})=\frac{1}{2^{n}}\prod_{i=1}^{n}\frac{i!(x+i-2)!(2x+2i-2)_i(x+2i-1)_i(x+3i-1)}{(x+2i-1)!(2i)!},
\end{equation}
\begin{equation}
\M({}_*^*\mathcal{G}_{n,x})=\frac{1}{2^{2n}}\prod_{i=1}^{n}\frac{(2x+2i-2)_{i-1}[2x+4i-1]_{i}}{(i)_i[2x+2i-1]_{i-1}}.
\end{equation}
\end{lem}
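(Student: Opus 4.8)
The plan is to prove the four product formulas simultaneously by a single induction on $n$, using Kuo condensation (Lemma \ref{Kuothm}) as the engine and treating the base case $n=0$ (where each region is empty, so $\M=1$, matching the empty products) separately. First I would set up the dual graph $G$ of $\mathcal{G}_{n,x}$ (and note the weighted versions only change edge-weights along the western and/or northeastern zig-zag boundaries), and identify a face on the outer boundary carrying four vertices $u,v,w,s$ in cyclic order with $u,v,w$ in one color class and $s$ in the other; the natural choice is to place these near a corner of the pentagon so that deleting them produces smaller regions of the same four types but with parameters shifted — typically $n\mapsto n-1$ together with possibly $x\mapsto x+1$ or $x\mapsto x-1$, and possibly a switch between the plain, ${}_*$, ${}^*$, and ${}^*_*$ decorations. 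Each of the six terms $\M(G-\{v\})$, $\M(G-\{u,w,s\})$, etc., should (after removing forced lozenges via (\ref{forceeq}), and possibly applying Lemma \ref{RS} to split off a trivially-tileable or product-formula subregion) reduce to one of the four $\mathcal{G}$-type TGFs at smaller size.

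Concretely, I expect that choosing the condensation vertices near the north/northeast corner relates $\M(\mathcal{G}_{n,x})$ to $\M({}^*\mathcal{G}_{n,x})$ and to smaller plain and starred regions, while a choice near the west/south corner brings in the ${}_*$-decorated regions; combining two or three such recurrences should close the system. So the first substantive step is bookkeeping: pin down exactly which four subregions appear in each of the six terms of (\ref{Kuoeq}), in each of the four cases, and how the weight-$\tfrac12$ lozenges are inherited (a deleted or forced weighted vertical lozenge contributes the factor $\tfrac12$ explicitly via (\ref{forceeq}), which is where the powers of $2$ in the denominators of the claimed formulas come from). The second step is to verify the resulting recurrences: substitute the conjectured product formulas into Kuo's identity and check it reduces to a polynomial (rational-function) identity in $x$ (with $n$ fixed), which is the content deferred to the algebraic verification in Section \ref{verification}.

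The main obstacle is the algebraic verification of the condensation recurrences: after substituting the four candidate products, each term in (\ref{Kuoeq}) is a ratio of shifted factorials / Pochhammer and skipped-Pochhammer symbols, and one must show the identity $A = B + C$ holds identically. The standard route is to divide through by one of the terms, so the identity becomes $1 = (\text{ratio}) + (\text{ratio})$ where each ratio simplifies (after massive cancellation of Pochhammer factors) to a rational function of $x$ of low degree; then it is a finite check. The delicate points are getting the floor/ceiling and parity conventions in $[x]_n$ consistent across the shifted parameters, and making sure the four formulas are chosen with exactly the right powers of $2$ so that the $2^{\pm 1}$ factors from forced weighted lozenges balance. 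I would also double-check small cases ($n=1,2$) by hand against both the product formulas and a direct enumeration, to catch sign or off-by-one errors before committing to the general induction.
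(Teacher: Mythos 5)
Your strategy is entirely different from the paper's: the paper does not prove Lemma \ref{Glem} by induction or condensation at all. It simply identifies the four regions with objects already in the literature --- $\mathcal{G}_{n,x}$ with a region of Ciucu and Fischer \cite{Ciucu60}, ${}^*\mathcal{G}_{n,x}$ and ${}_*\mathcal{G}_{n,x}$ with the regions $A_{n+1,x-1}$ and $B_{n+1,x-1}$ of \cite{CiuPP} (after stripping forced lozenges), and ${}^*_*\mathcal{G}_{n,x}$ with the region $R_1^-$ of \cite{CiuPP}, whose tiling generating function is encoded by nonintersecting lattice paths, written as a determinant via the Lindstr\"om--Gessel--Viennot lemma, and evaluated using Theorems 5 and 7 of \cite{MRR2}. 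A self-contained Kuo-condensation proof would be a legitimate alternative and would match the style of the paper's other arguments, but what you have written is a plan rather than a proof, and the plan has a genuine gap at its center.

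The gap: you never exhibit the augmented graph, the four vertices $u,v,w,s$, or the six regions that result, so the claim that each of $\M(G-\{v\})$, $\M(G-\{u,w,s\})$, and so on reduces to a $\mathcal{G}$-type region of one of the four decorated kinds with smaller parameters is entirely unverified --- and this identification is the whole content of a condensation proof. Two concrete obstacles you do not address: (i) Lemma \ref{Kuothm} requires $|V_1|=|V_2|+1$, whereas the dual graph of the balanced region $\mathcal{G}_{n,x}$ has $|V_1|=|V_2|$, so one must first construct an auxiliary region by adding a band of unit triangles (as the paper does for the $\mathcal{R}$-, $\mathcal{F}$-, and $\mathcal{E}$-type regions) and check that the added triangles are forced out correctly in all six terms; (ii) the four regions differ only in the weights of boundary lozenges along the zig-zag sides, so the deletions must be arranged so that the weight-$\tfrac12$ lozenges are either forced (contributing explicit factors of $\tfrac12$ via (\ref{forceeq})) or inherited intact --- your phrase ``possibly a switch between the plain, ${}_*$, ${}^*$, and ${}^*_*$ decorations'' is precisely the bookkeeping that determines whether the system of four recurrences closes, and you leave it open. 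Until the recurrences are actually written down there is nothing to verify algebraically, so deferring the Pochhammer computation is premature; checking $n=1,2$ by hand is a useful sanity check but not a substitute for the missing combinatorial step.
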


\begin{figure}
  \centering
  \includegraphics[width=6cm]{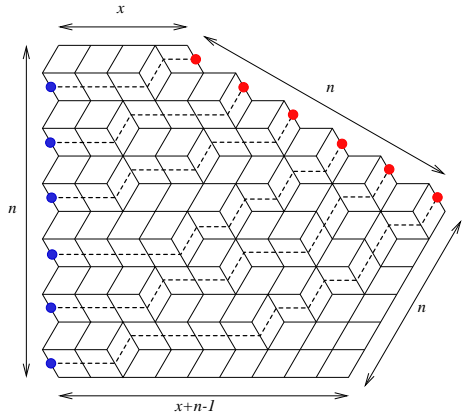}
  \caption{A tiling of a $\mathcal{G}$-type region can be viewed as a family of non-intersecting lattice paths.}\label{G-region2}
\end{figure}

\begin{proof}
The proof of the first part was shown in \cite[Proposition 3.1]{Ciucu60} as a consequence of results in \cite{CiuPP} and \cite{MRR2}. The second and third parts follow directly from \cite[Proposition 2.2]{CiuPP}.
Indeed, if forced lozenges are removed from the regions $A_{n,x}$ and $B_{n,x}$ defined in \cite{CiuPP}, we get respectively the regions ${}^*\mathcal{G}_{n-1,x+1}$ and  ${}_*\mathcal{G}_{n-1,x+1}$.
Therefore
\begin{equation}\label{Geq1}
\M({}^*\mathcal{G}_{n,x})=2\M(A_{n+1,x-1})
\end{equation}
\begin{equation}\label{Geq2}
\M({}_*\mathcal{G}_{n,x})=\M(B_{n+1,x-1}).
\end{equation}
The region ${}^*_*\mathcal{G}_{n,x+1}$ was introduced in \cite{CiuPP} as the region `$R_1^-$', and its tiling generating  function was given in Remark 4.5 there. In particular, the weighted lozenge tilings of ${}^*_*\mathcal{G}_{n,x}$ can be encoded as $n$-tuples of nonintersecting lattice paths (see Figure \ref{G-region2}). By the Lindstr\"{o}m-Gessel-Viennot Lemma (see e.g. \cite{Lind}, Lemma 1 or  \cite{Stem}, Theorem 1.2), the TGF of ${}^*_*\mathcal{G}_{n,x+1}$ is equal to the determinant of the matrix $M$, whose $(i,j)$-entry is given by
\begin{align}\label{Geq3}
M_{i,j}&=\binom{x+i+j}{2i-j}+\frac{1}{2}\binom{x+i+j}{2i-j-1}+\frac{1}{2}\binom{x+i+j}{2i-j+1}+\frac{1}{4}\binom{x+i+j}{2i-j}\\
&=\frac{1}{4}\left(\binom{x+i+j}{2i-j}+2\binom{x+i+j+2}{2i-j+1}\right).
\end{align}
Therefore
\begin{equation}\label{Geq4}
 \det M=2^{-2n}\det\left(\binom{x+i+j}{2i-j}+2\binom{x+i+j+2}{2i-j+1}\right)_{0\leq i,j\leq n-1}.
\end{equation}
By \cite[Theorems 5,7]{MRR2}, the determinant on the right-hand side of (\ref{Geq4}) is given by
\begin{align}\label{Geq5}
  \det\left(\binom{x+i+j}{2i-j}+2\binom{x+i+j+2}{2i-j+1}\right)_{0\leq i,j\leq n-1}&=\frac{\det \left(\sigma_{i,j}+\binom{i+j+2x}{j}\right)_{0\leq i,j \leq 2n-1}}{\det \left(\binom{i+j+x}{2j-i}\right)_{0\leq i,j \leq n-1}}\\
%  & =2^{n}\frac{\prod_{i=0}^{2n-1}\Delta_i(2x)}{\prod_{i=1}^{n-1}\Delta_{2i}(2x)}\\
 % &=2^{n}\prod_{i=1}^{n}\Delta_{2i-1}(2x)\\
  &=\prod_{i=1}^{n}\frac{(2x+2i)_{i-1}(2x+4i+1)_{i}}{(i)_i(2x+2k+1)_{k-1}}.
\end{align}
This implies the final formula in Lemma \ref{Glem}.
\end{proof}

\medskip

Next, we investigate several generalizations of the $\mathcal{G}$-type regions in Lemma \ref{Glem}.

Our first family consists of the pentagonal regions defined as follows. We start with a pentagonal region similar to the region $\mathcal{G}_{n,x}$, where the northern, northeastern, southeastern, southern, and western sides have lengths $x,\  y+z+2a,\ y+z,\ x+y+z+3a+1,\  y+z+a$, respectively. We remove the $(y+1)$-st through $(y+a)$-th up-pointing unit triangles on the western side (counting from top to bottom). Next, we remove several forced lozenges  to obtain a semi-triangular hole on the western side. Denote by $\mathcal{R}_{x,y,z}(a)$ the resulting region (see Figure \ref{onesix1} for an example; the black triangles are the ones removed). The number of tilings of $\mathcal{R}_{a,b,c}(x)$ is given by a simple product formula.

\begin{figure}\centering
\includegraphics[width=13cm]{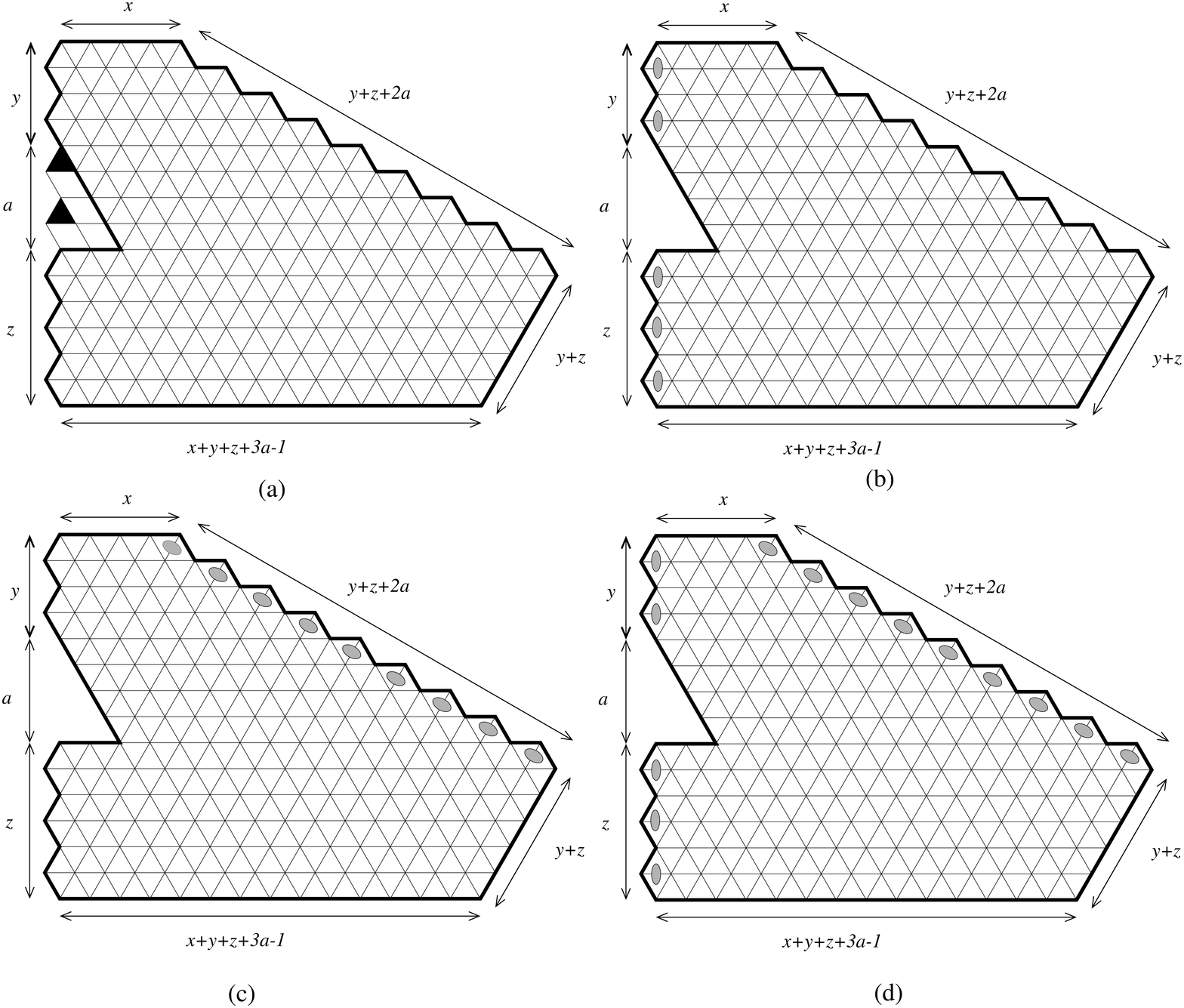}
\caption{The regions (a) $\mathcal{R}_{4,2,3}(2)$, (b)  ${}_*\mathcal{R}_{4,2,3}(2)$, (c) ${}^*\mathcal{R}_{4,2,3}(2)$, and (d) ${}_*^*\mathcal{R}_{4,2,3}(2)$. The lozenges with shaded cores have weight $1/2$.}\label{onesix1}
\end{figure}

\begin{thm}\label{R1thm}
For any non-negative integers $x,y,z,a$
\begin{align}\label{Req}
\M(\mathcal{R}_{x,y,z}(a))&=P_1(x,y,z,a)
\end{align}
where $P_1(x,y,z,a)$ is defined in (\ref{P1form}) in Section \ref{Statement}.
\end{thm}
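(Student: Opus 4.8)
The plan is to prove Theorem~\ref{R1thm} by induction via Kuo condensation (Lemma~\ref{Kuothm}), using the $\mathcal{G}$-type evaluations of Lemma~\ref{Glem} as the base of the induction and the region $\mathcal{R}_{x,y,z}(a)$ together with a companion family (the weighted variants ${}_*\mathcal{R}$, ${}^*\mathcal{R}$, ${}^*_*\mathcal{R}$ shown in Figure~\ref{onesix1}) as the objects carried through the recurrence. Concretely, I would first pass to the dual graph $G$ of $\mathcal{R}_{x,y,z}(a)$ and choose the four vertices $u,v,w,s$ on a suitable face near one corner of the region — most naturally near the northeastern/southeastern corner or along the notched western side — so that deleting each of the six vertex-subsets appearing in \eqref{Kuoeq} produces, after stripping forced lozenges via \eqref{forceeq}, regions of the same shape but with the parameters $x,y,z,a$ shifted by small amounts. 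The art here is selecting the condensation vertices so that all six resulting regions lie in the same four-parameter family (possibly with $a$ replaced by $a-1$ or with one of $y,z$ incremented/decremented), because only then does \eqref{Kuoeq} close up into a recurrence purely among $P$-type products.

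Once the recurrence is set up, the proof splits into two essentially independent tasks. The combinatorial task is to verify, region by region, that each of $\M(G-\{v\})$, $\M(G-\{u,w,s\})$, etc., equals $\M$ of a specific shifted $\mathcal{R}$-region times an explicit monomial weight coming from forced lozenges; this is bookkeeping with Figure~\ref{onesix1}-style pictures and Lemma~\ref{RS} to peel off balanced subregions. The algebraic task is to check that $P_1(x,y,z,a)$ — and its three weighted cousins $P_2$, $F_1$, $F_2$, or whichever products show up as the companion regions — satisfy the very same recurrence with the same boundary data. Since $P_1$ is a ratio of Pochhammer and skipped-Pochhammer symbols, this reduces to a finite identity among products of such symbols after the common factors are cancelled; this is exactly the kind of verification the paper defers to Section~\ref{verification}, so in the body I would state the needed identity, reduce \eqref{Kuoeq} to it, and cite the later section for the routine manipulation.

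For the induction to go through cleanly I expect one will actually need to prove the four statements (for $\mathcal{R}$, ${}_*\mathcal{R}$, ${}^*\mathcal{R}$, ${}^*_*\mathcal{R}$) \emph{simultaneously}, since a single application of Kuo condensation to one of them typically produces regions of the other three types: the $1/2$-weights on boundary vertical lozenges migrate when forced lozenges are removed near the weighted side. So the cleanest formulation is a single induction on, say, $y+z+a$ (with $a=0$ or $y+z$ minimal as the base case, handled by Lemma~\ref{Glem}), where the inductive hypothesis asserts all four product formulas at smaller values of the induction parameter, and the inductive step is one Kuo condensation per region plus one algebraic identity per region.

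The main obstacle, and the step I would budget the most care for, is the geometric set-up: positioning the condensation vertices $u,v,w,s$ on a common face so that \emph{all six} deletions degenerate to the four-parameter family rather than to some larger zoo of regions, and correctly identifying the forced-lozenge weights (powers of $2$, and the various Pochhammer-type blocks) that appear. A wrong choice of face yields a recurrence that does not close, and an off-by-one in the forced-lozenge count corrupts the algebraic identity in a way that is hard to debug. Once the correct picture is in hand, the remaining algebra is mechanical, if lengthy, and is exactly what Section~\ref{verification} is designed to absorb.
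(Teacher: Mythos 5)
Your overall skeleton --- Kuo condensation, induction with Lemma \ref{Glem} at the base, and a deferred Pochhammer identity --- is exactly the paper's strategy, but three of your specific choices would derail the argument. First, the simultaneous induction over all four families $\mathcal{R}$, ${}_*\mathcal{R}$, ${}^*\mathcal{R}$, ${}^*_*\mathcal{R}$ cannot work: the paper notes explicitly that ${}_*\mathcal{R}_{x,y,z}(a)$ and ${}^*\mathcal{R}_{x,y,z}(a)$ do \emph{not} admit simple product formulas, so there is nothing to carry through the induction for those two. The correct resolution is that the condensation, properly set up, never leaves the unweighted family: the paper applies Lemma \ref{Kuothm} not to the dual graph of $\mathcal{R}_{x,y,z}(a)$ itself but to that of the region obtained by adding a band of unit triangles along the side of the semi-triangular hole (Figure \ref{Onesixkuo1}), with $u$ at the bottom of that band, $v$ in the upper-right corner, and $w,s$ forming a bowtie in the lower-right corner; all six deletions then reduce, after stripping forced lozenges, to unweighted $\mathcal{R}$-regions and close up into the recurrence (\ref{Rthmeq7}). (The weighted region ${}^*_*\mathcal{R}$ is handled separately in Theorem \ref{R2thm} by the same condensation, where the forced-lozenge weights cancel out of the recurrence.)

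Second, your base cases are incomplete: Lemma \ref{Glem} covers $a=0$ and, after removing forced lozenges, $y=0$, but the $z=0$ case does not degenerate to a $\mathcal{G}$-region --- it is the region $H_{d}(y+2a-1,y,x+y+2a-1)$ of Ciucu and Krattenthaler \cite{CK3}, and that external evaluation is genuinely needed. Third, your proposed induction parameter $y+z+a$ fails to decrease: the recurrence (\ref{Rthmeq7}) contains the term $\M(\mathcal{R}_{x,y+1,z}(a-1))$, for which $y+z+a$ is unchanged; the paper instead inducts on $z+a$, which strictly decreases in every term other than the target $\mathcal{R}_{x,y,z}(a)$. None of these is fatal to the method, but each is a concrete repair you would have to make before the plan becomes a proof.
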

%NOTE: (i-1)!/(y+i-1)!=1/(i)_y
\begin{proof}
We prove the theorem  by induction on $z+a$. The base cases are the situations when one of the parameters $y,$ $z,$ $a$ is $0$.

If $a=0$, our theorem follows directly from Lemma \ref{Glem}. Next, if $y=0$, by removing forced lozenges on the top of the region, we get back the region $\mathcal{G}_{z,x+3a}$ and our theorem follows (see Figure \ref{Onesixbase1}(a)). Finally, if $z=0$, then our region becomes the region $H_{d}(y+2a-1,y,x+y+2a-1)$ introduced in \cite[Theorem 1.1]{CK3}, and the theorem follows (see Figure \ref{Onesixbase1}(b)).

For the induction step, we assume that $y,z,a>0$ and that equation (\ref{Req}) holds for any $\mathcal{R}$-type regions in which the sum of the $z$- and $a$-parameters is strictly less then $z+a$. We apply Kuo condensation (Lemma \ref{Kuothm}) to the dual graph $G$ of the region $\mathcal{R}$ obtained from $\mathcal{R}_{x,y,z}(a)$ by adding a band of unit triangles along the side of the semi-triangular hole (see the shaded band in Figure \ref{Onesixkuo1}). The four vertices $u,v,w,s$ in Lemma \ref{Kuothm} correspond to the black triangles in Figure \ref{Onesixkuo1}. In particular, the $u$-triangle is the up-pointing unit triangle located at the bottom of the shaded band, the $v$-triangle is the up-pointing unit triangle in the upper-right corner of the region, and the $w$- and $s$-triangles create a bowtie in the lower-right corner of the region.

\begin{figure}\centering
\includegraphics[width=11cm]{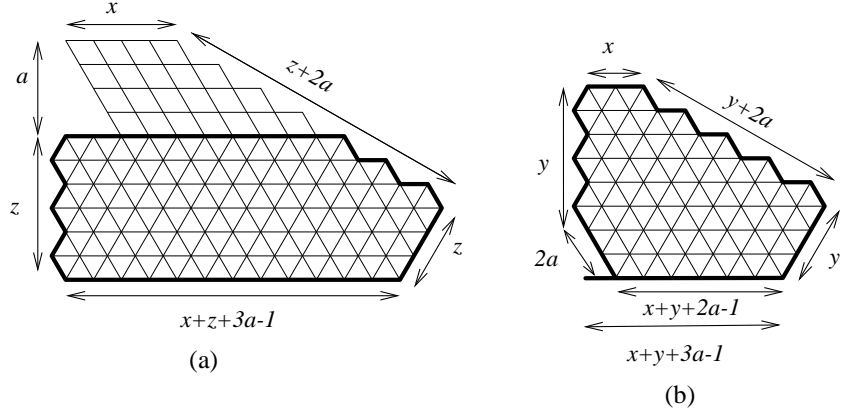}
\caption{The $R$-type region in the cases:  (a)  where $y=0$ and where (b) $z=0$.}\label{Onesixbase1}
\end{figure}

\begin{figure}\centering
\includegraphics[width=8cm]{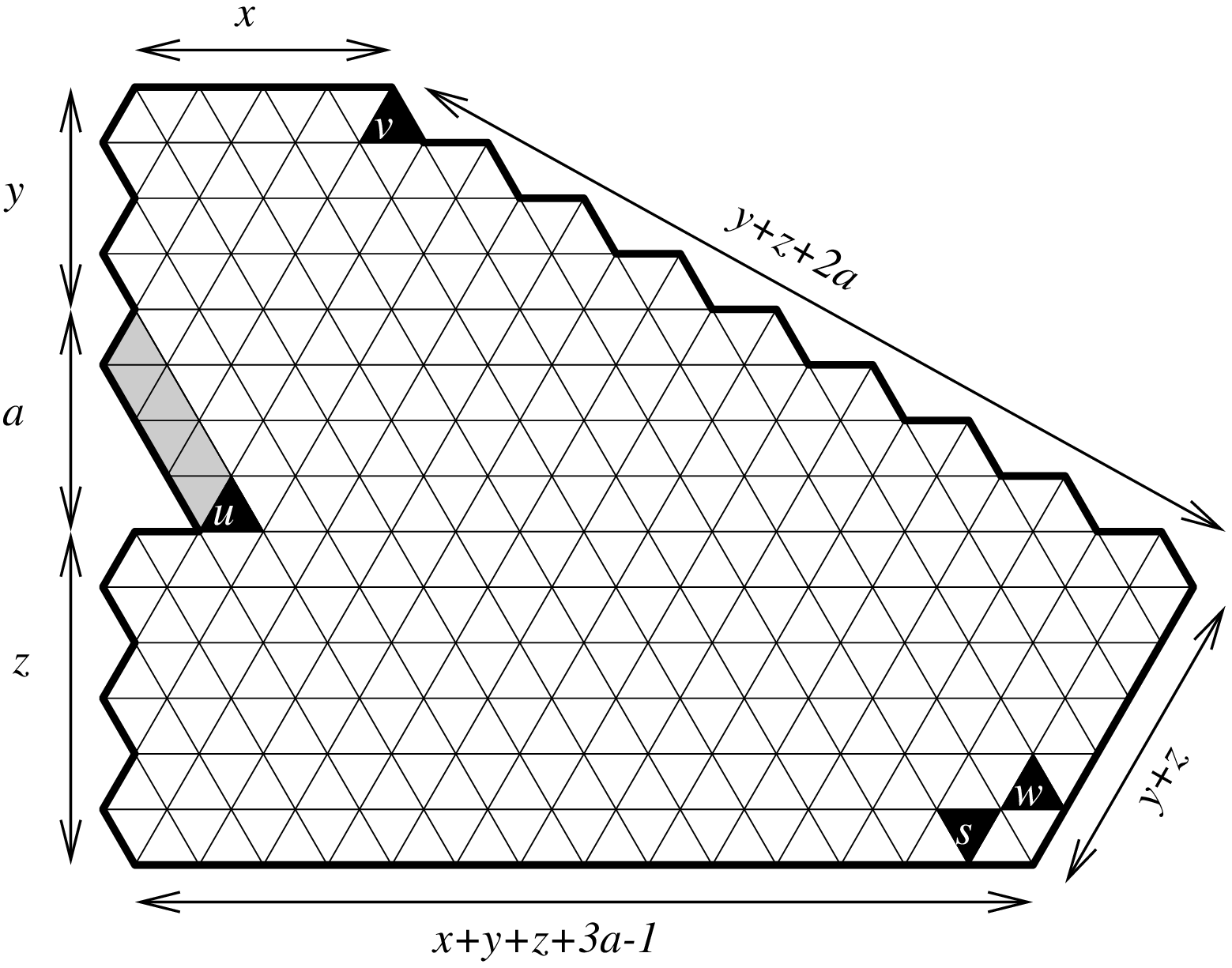}
\caption{Applying Kuo condensation to an $\mathcal{R}$-type region.}\label{Onesixkuo1}
\end{figure}

\begin{figure}\centering
\includegraphics[width=12cm]{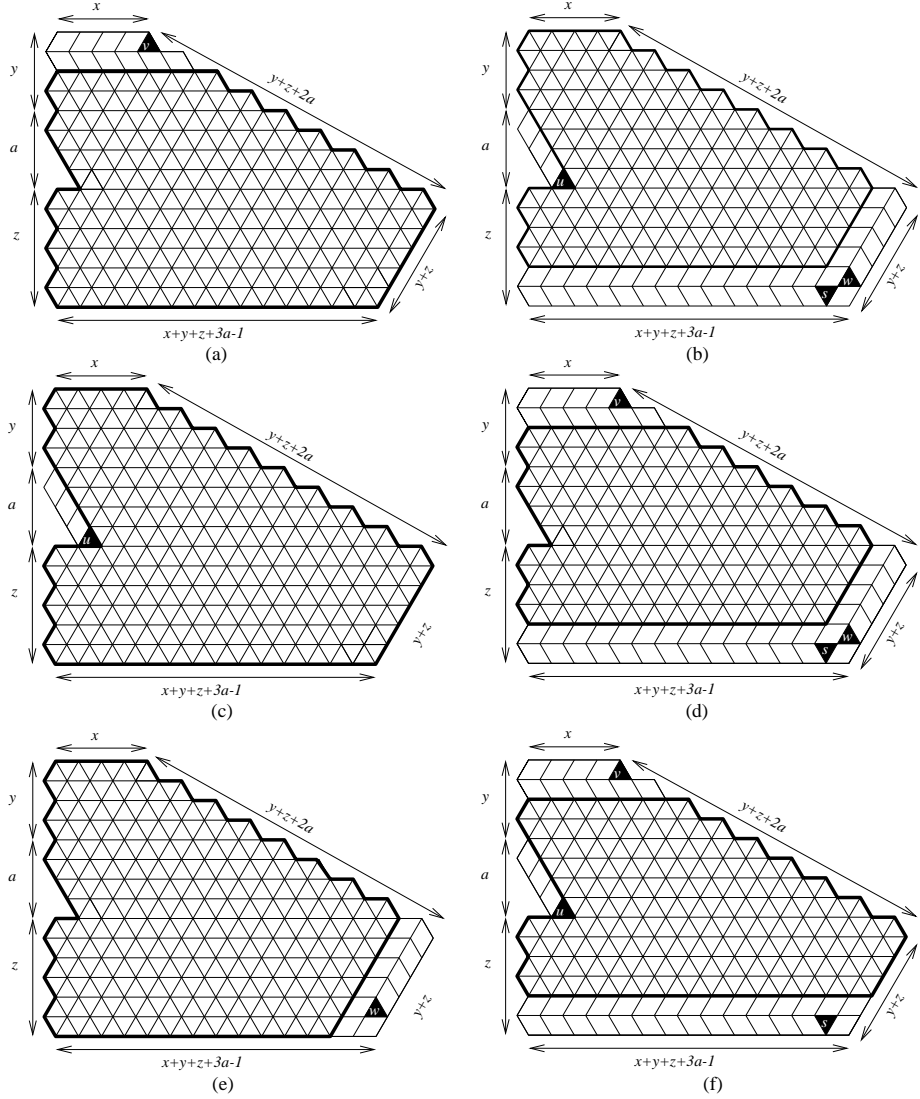}
\caption{Obtaining a recurrence for the number of tilings of $\mathcal{R}$-type regions.}\label{Onesixkuo1b}
\end{figure}

We consider the region corresponding to the graph $G-\{v\}$. The removal of the $v$-triangle yields several forced lozenges at the top of the region. By removing these forced lozenges, we obtain the region $\mathcal{R}_{x+3,y,z}(a-1)$ (see the region restricted by the bold contour in Figure \ref{Onesixkuo1b}(a)). Therefore,
\begin{equation}\label{Rthmeq1}
\M(G-\{v\})=\M(\mathcal{R}_{x+3,y,z}(a-1)).
\end{equation}
Similarly, by considering forced lozenges as indicated respectively in Figures \ref{Onesixkuo1b}(b)--(f), we have
\begin{equation}\label{Rthmeq2}
\M(G-\{u,s,w\})=\M(\mathcal{R}_{x,y,z-1}(a)),
\end{equation}
\begin{equation}\label{Rthmeq3}
\M(G-\{u\})=\M(\mathcal{R}_{x,y,z}(a)),
\end{equation}
\begin{equation}\label{Rthmeq4}
\M(G-\{v,w,s\})=\M(\mathcal{R}_{x+3,y,z-1}(a-1)),
\end{equation}
\begin{equation}\label{Rthmeq5}
\M(G-\{w\})=\M(\mathcal{R}_{x,y+1,z}(a-1)),
\end{equation}
and
\begin{equation}\label{Rthmeq6}
\M(G-\{u,v,s\})=\M(\mathcal{R}_{x+3,y-1,z-1}(a)).
\end{equation}
Plugging the six equations (\ref{Rthmeq1})--(\ref{Rthmeq6}) into equation (\ref{Kuoeq}) in Lemma \ref{Kuothm}, we obtain the following recurrence:
\begin{align}\label{Rthmeq7}
\M(\mathcal{R}_{x+3,y,z}(a-1))\M(\mathcal{R}_{x,y,z-1}(a))=&\M(\mathcal{R}_{x,y,z}(a))\M(\mathcal{R}_{x+3,y,z-1}(a-1))\notag\\
&+\M(\mathcal{R}_{x,y+1,z}(a-1))\M(\mathcal{R}_{x+3,y-1,z-1}(a)).
\end{align}
One readily sees that all the regions in (\ref{Rthmeq7}), except for $\mathcal{R}_{x,y,z}(a)$, have the sum of their $z$- and $a$-parameters strictly less than $z+a$. Thus, their numbers of tilings are given by the formula (\ref{Req}). Our final task, done in Section \ref{verification}, is verifying that the expression on the right-hand side of (\ref{Req}) also satisfies the above recurrence (\ref{Rthmeq7}), and our theorem follows from the induction principle.
\end{proof}

As in the case of the $\mathcal{G}$-type regions, we are also interested in three weighted variations  ${}_{*}\mathcal{R}_{x,y,z}(a)$, ${}^*\mathcal{R}_{x,y,z}(a)$, and ${}^*_{*}\mathcal{R}_{x,y,z}(a)$ of $\mathcal{R}_{x,y,z}(a)$, that are obtained by assigning weight $1/2$ to lozenges along the western side, along the northeastern side, and along both the western and northeastern sides, respectively (see Figures \ref{onesix1}(b)--(d)).  The TGF of ${}^*_{*}\mathcal{R}_{x,y,z}(a)$ is also given by a simple product formula (see Theorem \ref{R2thm}). However, the TGFs of the regions ${}_{*}\mathcal{R}_{x,y,z}(a)$, ${}^*\mathcal{R}_{x,y,z}(a)$ are not.

\begin{thm}\label{R2thm}
For any non-negative integers $x,y,z,a$
\begin{align}\label{Rbeq}
\M({}_*^*\mathcal{R}_{x,y,z}(a))=P_2(x,y,z,a),
\end{align}
where the product $P_2(x,y,z,a)$ is defined as in (\ref{P2form}) in Section \ref{Statement}.
%\begin{align}\label{Rbeq}
%\M({}_*^*R_{x,y,z}(a))&= \M({}_*^*G_{y+z,x+3a-1})\notag\\
%&\prod_{i=1}^{a}\frac{(i-1)!(z+i)_{y+a-2i+1}(x+y+2z+2a+2i)_{2y+2a-4i+2}(x+3i-2)_{y-i+1}}{(y+i-1)!(y+2z+2i-1)_{y+2a-4i+3}(2z+2i)_{y+2a-4i+1}(x+y+z+2a+i)_{y+z-2i+1}} \notag\\
%&\prod_{i=1}^{a}\frac{(x+3y+2i-2)(x+2y+z+2a+i-1)(x+y+2z+2a+2i-1)}{(x+y+2i-2)(x+y+z+2a+i-1)(x+3y+2z+2a+2i-1)}.
%\end{align}
\end{thm}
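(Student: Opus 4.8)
The plan is to prove Theorem~\ref{R2thm} by induction on $z+a$, running in complete parallel to the proof of Theorem~\ref{R1thm}. The base cases are again those in which one of $y,z,a$ vanishes. When $a=0$ there is no semi-triangular hole, and (possibly after deleting forced lozenges of weight $1$) the region ${}_*^*\mathcal{R}_{x,y,z}(0)$ becomes a ${}_*^*\mathcal{G}$-type region, so the formula is the last identity of Lemma~\ref{Glem}; indeed one checks directly that $P_2(x,y,z,0)=\M({}_*^*\mathcal{G}_{y+z,x})$. When $y=0$, removing the forced lozenges along the top reduces the region to a ${}_*^*\mathcal{G}$-type region with $x$ replaced by $x+3a$, the deleted lozenges accounting exactly for the remaining factors of $P_2(x,0,z,a)$; Lemma~\ref{Glem} then applies. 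When $z=0$, the region is a $\tfrac12$-weighted analogue of the region $H_d$ of \cite{CK3}, whose weighted tiling generating function can be extracted from \cite{CK3} (or computed directly via the Lindstr\"om--Gessel--Viennot lemma). In all three cases one keeps careful track of which $\tfrac12$-weighted boundary lozenges survive the reductions.

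For the induction step, assume $y,z,a>0$. I would apply exactly the same Kuo condensation (Lemma~\ref{Kuothm}) as in the proof of Theorem~\ref{R1thm}: add a band of unit triangles along the side of the semi-triangular hole and take $u$ at the bottom of the band, $v$ in the upper-right corner, and $w,s$ forming a bowtie in the lower-right corner. Removing the six vertex subsets yields the six regions of~(\ref{Rthmeq1})--(\ref{Rthmeq6}), now carrying their inherited $\tfrac12$-weights on the western and northeastern boundaries; the only new feature compared with the unweighted case is that some of the forced lozenges removed during the reductions are weighted, so each of the six matching generating functions differs from the corresponding ${}_*^*\mathcal{R}$ by an explicit power of $2$. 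Because the condensation treats the three pairs of terms symmetrically, these powers of $2$ combine consistently across the three products in~(\ref{Kuoeq}), so that after dividing them out one obtains a recurrence of the same shape as~(\ref{Rthmeq7}) with each $\mathcal{R}$ replaced by ${}_*^*\mathcal{R}$ (possibly up to one overall power of $2$, computed explicitly from the boundary lozenge counts). By the induction hypothesis every term except $\M({}_*^*\mathcal{R}_{x,y,z}(a))$ equals the corresponding value of $P_2$.

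The remaining task --- deferred, as in the proof of Theorem~\ref{R1thm}, to Section~\ref{verification} --- is to verify that $P_2(x,y,z,a)$ of~(\ref{P2form}) satisfies this recurrence together with the base-case evaluations. I expect this algebraic verification to be the main obstacle: $P_2$ carries more factors than $P_1$ (notably the prefactor $\tfrac{[x+3y]_a(x+2y+z+2a)_a}{2^{2a}[x+3y+2z+2a+1]_a}$), so the ratios of consecutive instances of $P_2$ entering the recurrence are bulkier, and one must simultaneously confirm that the power-of-$2$ count coming from the forced weighted lozenges is exactly right. As with the verification of Theorem~\ref{R1thm}, the computation should reduce to an identity of the form $1=A+B$ with $A,B$ explicit products of (skipped) Pochhammer ratios, which can be cleared to a polynomial identity and checked directly.
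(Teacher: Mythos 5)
Your proposal matches the paper's proof: the paper proves Theorem \ref{R2thm} by running the identical Kuo condensation of Theorem \ref{R1thm} on the weighted region, observing that the forced weighted lozenges contribute factors of $\tfrac12,\tfrac12,\tfrac12,\tfrac12,1,\tfrac14$ to the six matching generating functions, which cancel exactly in (\ref{Kuoeq}) to give the same recurrence (\ref{Rthmeq7b}) with no residual power of $2$, and then defers the algebraic verification for $P_2$ as being analogous to that for $P_1$. Your hedging about a possible leftover overall power of $2$ is unnecessary but harmless, and your base-case reductions to ${}_*^*\mathcal{G}$-type regions and the weighted $H_d$ region are exactly what the parallel with Theorem \ref{R1thm} requires.
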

\begin{proof}
This theorem can be proven similarly to Theorem \ref{R1thm} by applying Lemma \ref{Kuothm} to the dual graph $G$ of the region $R$ obtained from ${}_*^*\mathcal{R}_{x,y,z}(a)$ by adding the same shaded band of unit triangles (with the leftmost vertical lozenge of the band having weight $1/2$). The choice of the four vertices $u,v,w,s$ is the same as the one in Figure \ref{Onesixkuo1}. By considering forced lozenges (that may now carry weights different from $1$) as in Figure \ref{Onesixkuo1b}, we obtain
\begin{equation}\label{Rthmeq1b}
\M(G-\{v\})=\frac{1}{2}\M({}_*^*\mathcal{R}_{x+3,y,z}(a-1)),
\end{equation}
\begin{equation}\label{Rthmeq2b}
\M(G-\{u,s,w\})=\frac{1}{2}\M({}_*^*\mathcal{R}_{x,y,z-1}(a)),
\end{equation}
\begin{equation}\label{Rthmeq3b}
\M(G-\{u\})=\frac{1}{2}\M({}_*^*\mathcal{R}_{x,y,z}(a)),
\end{equation}
\begin{equation}\label{Rthmeq4b}
\M(G-\{v,w,s\})=\frac{1}{2}\M({}_*^*\mathcal{R}_{x+3,y,z-1}(a-1)),
\end{equation}
\begin{equation}\label{Rthmeq5b}
\M(G-\{w\})=\M({}_*^*\mathcal{R}_{x,y+1,z}(a-1)),
\end{equation}
and
\begin{equation}\label{Rthmeq6b}
\M(G-\{u,v,s\})=\frac{1}{4}\M({}_*^*\mathcal{R}_{x+3,y-1,z-1}(a)).
\end{equation}
By substituting these new equations into equation (\ref{Kuoeq}) of Lemma \ref{Kuothm}, we still get the same recurrence for TGFs of ${}_*^*\mathcal{R}$-regions, i.e.
\begin{align}\label{Rthmeq7b}
\M({}_*^*\mathcal{R}_{x+3,y,z}(a-1))\M({}_*^*\mathcal{R}_{x,y,z-1}(a))=&\M({}_*^*\mathcal{R}_{x,y,z}(a))\M({}_*^*\mathcal{R}_{x+3,y,z-1}(a-1))\notag\\
&+\M({}_*^*\mathcal{R}_{x,y+1,z}(a-1))\M({}_*^*\mathcal{R}_{x+3,y-1,z-1}(a)).
\end{align}
Checking that the expression on the right-hand side of (\ref{Rbeq}) satisfies the same recurrence is similar to the verification in the proof of Theorem \ref{R1thm}.
\end{proof}

\begin{figure}
  \centering
  \includegraphics[width=12cm]{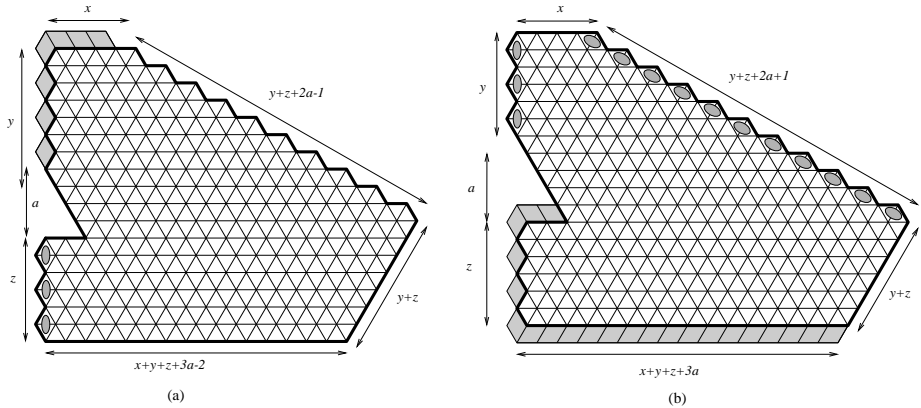}
  \caption{Two `mixed' regions: (a) $\mathcal{F}_{4,4,3}(2)$ and (b) $\overline{\mathcal{F}}_{3,2,3}(2)$.}\label{Onesixmix1}
\end{figure}

For our second family, we consider two variations of the $\mathcal{R}$-type regions as follows. As in the case of the $\mathcal{R}$-type regions, we start with a pentagonal region with side-lengths $x-1,y+z+2a,y+z,x+y+z+3a-2,y+z+a+1$. We remove the $(z+1)$-st through $(z+a)$-th up-pointing unit triangles on the western side, counting from bottom to top. Their removal forces several lozenges and results in a semi-triangular hole on the western side. In addition, we remove all the western boundary lozenges above the semi-regular hole, and remove all left-tilted lozenges on the top row of the region (see the shaded lozenges in Figure \ref{Onesixmix1}(a)). We assign to each western boundary lozenge below the hole weight $1/2$. Denote by $F_{x,y,z}(a)$ the resulting region (see the region restricted by the bold contour in Figure \ref{Onesixmix1}(a)).

Next, we start with the pentagonal region with side-lengths $x,y+z+2a+1,y+z,x+y+z+3a,x+y+z+1$. We assign to each western and northeastern boundary lozenge a weight $1/2$. We remove the $(z+2)$-nd through $(z+a+1)$-st up-pointing unit triangles on the western side, again resulting in a semi-triangular hole after forcing. We also remove all the vertical lozenges below the hole along the western side, and remove the right-tilted lozenges on the bottom row of the region. Finally, we clean up the region by removing forced lozenges at the bottom of the hole. Denote by $\overline{\mathcal{F}}_{x,y,z}(a)$ the resulting region (see Figure \ref{Onesixmix1}(b)).

\begin{figure}
  \centering
  \includegraphics[width=12cm]{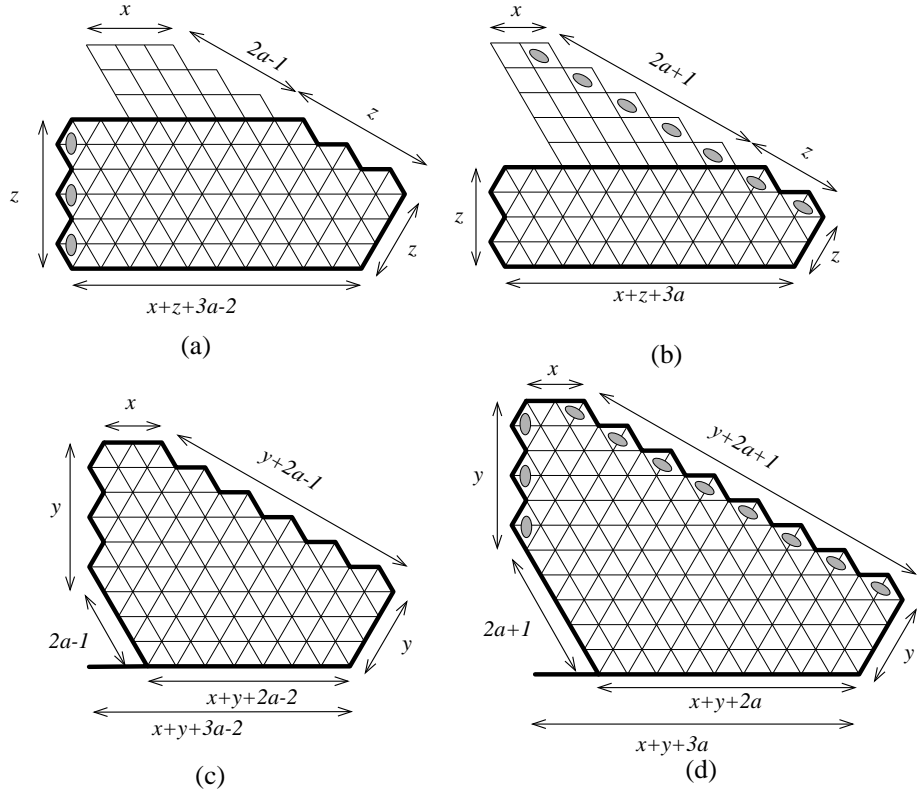}
  \caption{(a) The region $\mathcal{F}_{x,y,z}(a)$ with $y=0$. (b) The region $\overline{\mathcal{F}}_{x,y,z}(a)$ with $y=0$. (c) The region $\mathcal{F}_{x,y,z}(a)$ with $z=0$. (d) The region $\overline{\mathcal{F}}_{x,y,z}(a)$ with $z=0$. }\label{Onesixbase3}
\end{figure}

\begin{figure}
  \centering
  \includegraphics[width=8cm]{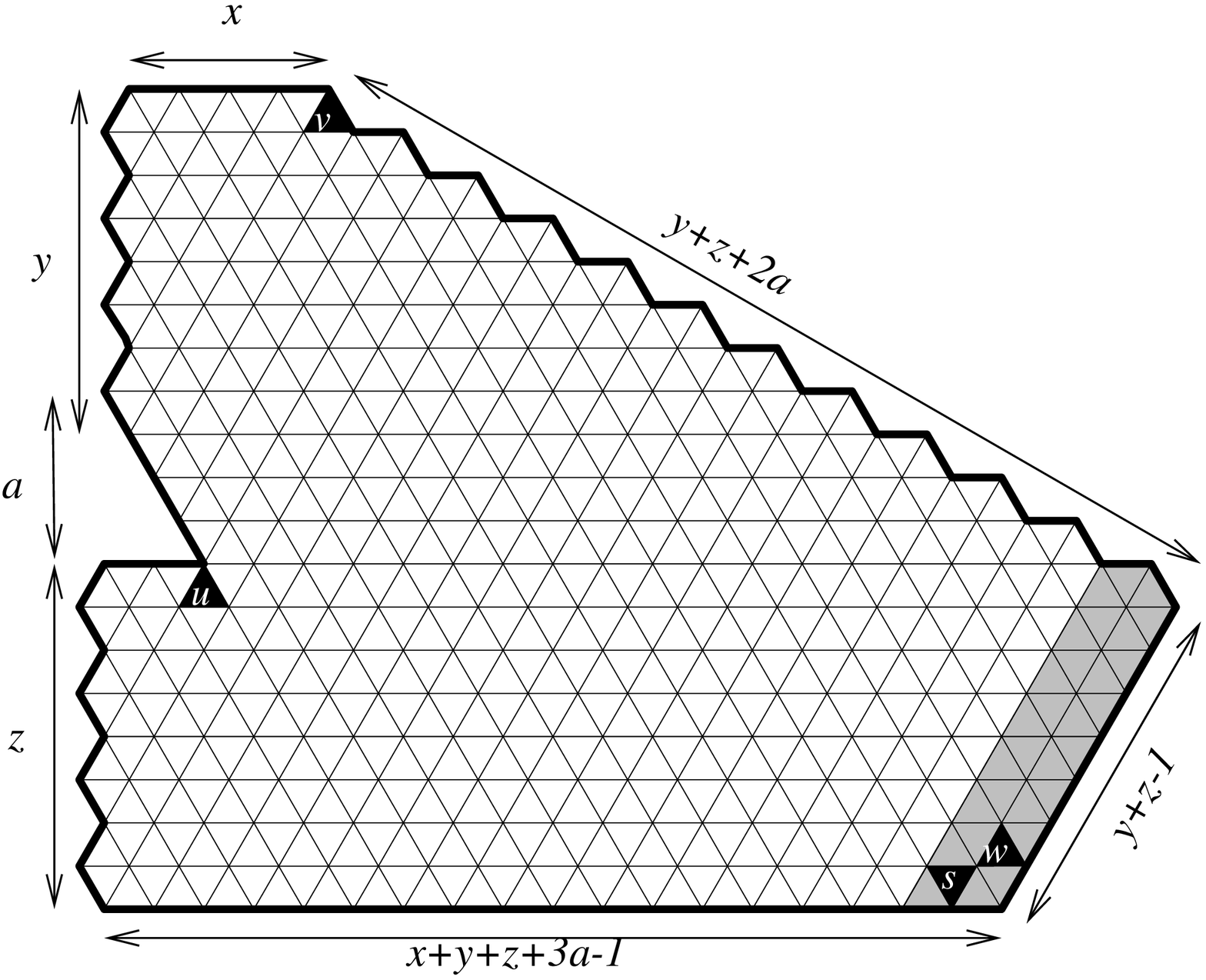}
  \caption{Applying Kuo condensation to an $\mathcal{F}$-type region.}\label{Onesixkuo4}
\end{figure}
\begin{figure}
  \centering
  \includegraphics[width=14cm]{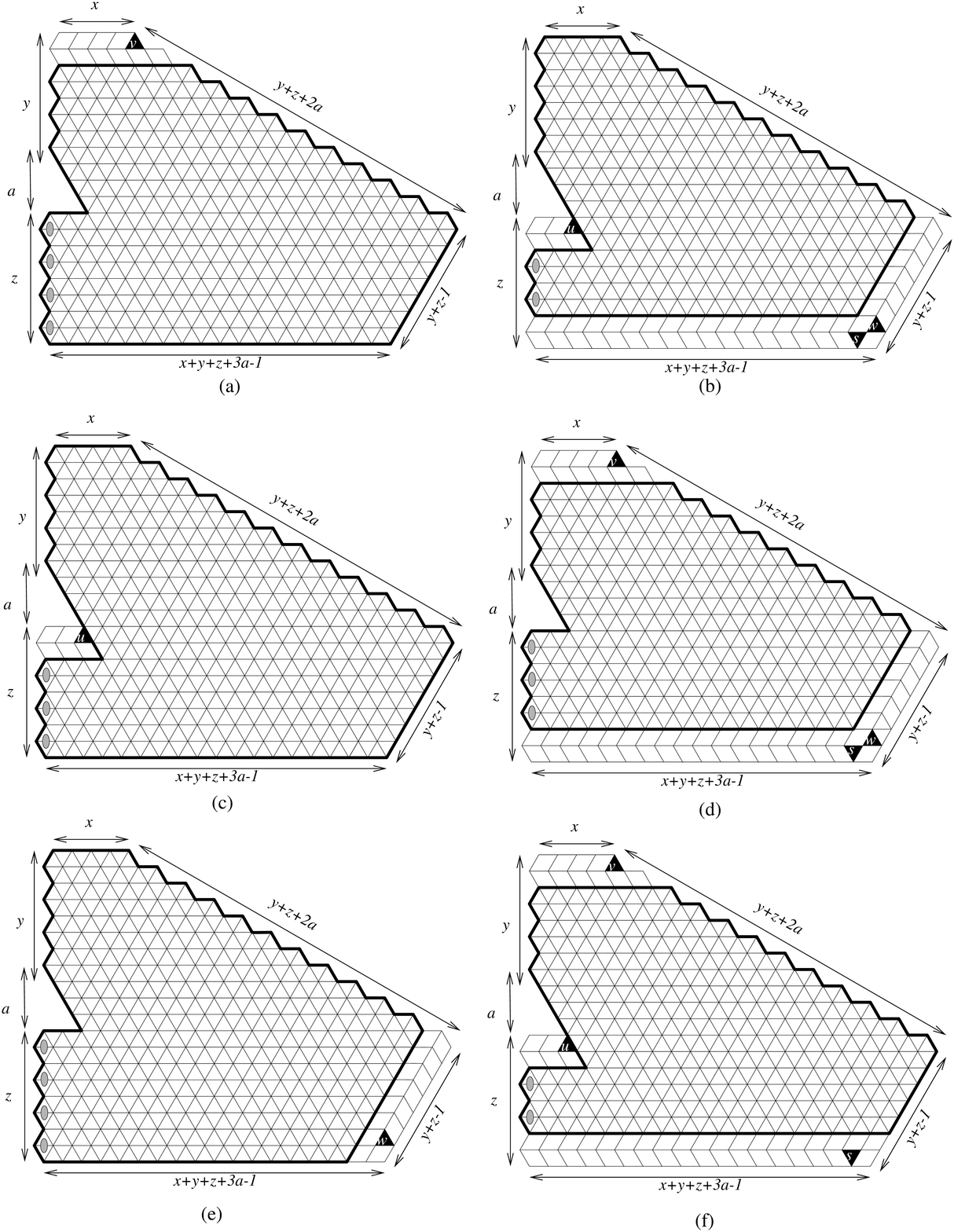}
  \caption{Obtaining a recurrence for the tiling generating functions of $\mathcal{F}$-type regions when $z>1$.}\label{Onesixkuo4b}
\end{figure}
\begin{figure}
  \centering
  \includegraphics[width=14cm]{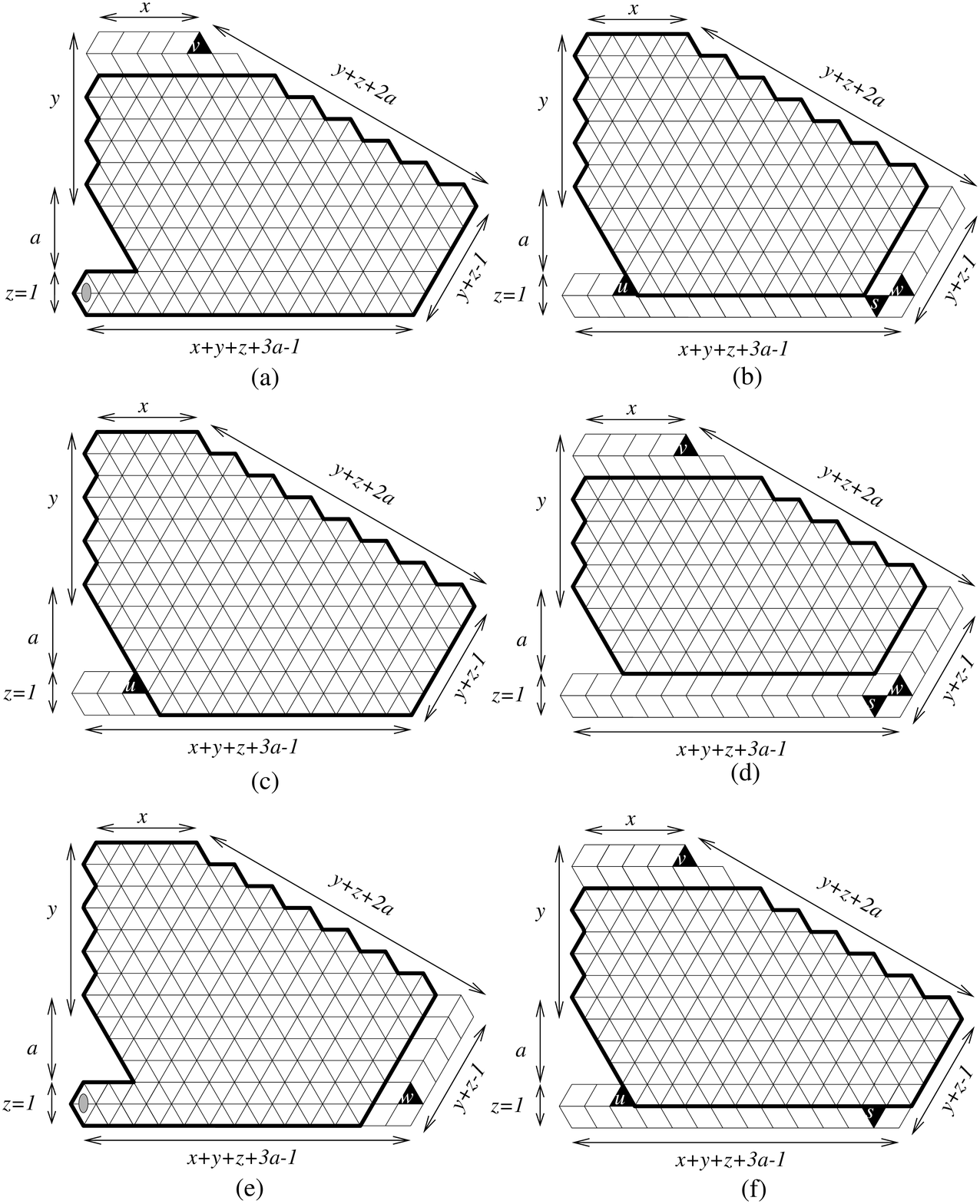}
  \caption{Obtaining a recurrence for the tiling generating functions of $\mathcal{F}$-type regions when $z=1$.}\label{Onesixkuo6}
\end{figure}

\begin{thm}\label{F1thm}
Assume that $x,y,z,a$  are non-negative integers. Then
\begin{align}\label{Feq}
\M(\mathcal{F}_{x,y,z}(a))=F_1(x,y,z,a),
\end{align}
where $F_1(x,y,z,a)$ is defined as in (\ref{F1form}).
\end{thm}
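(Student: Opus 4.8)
The plan is to prove Theorem \ref{F1thm} by induction, following the same template used for Theorems \ref{R1thm} and \ref{R2thm}, with the induction carried out on $z+a$ (or on $z+a$ with a secondary induction handling the $z=1$ boundary case separately, as the figure captions for Figures \ref{Onesixkuo4b} and \ref{Onesixkuo6} suggest two different recurrences depending on whether $z>1$ or $z=1$). The base cases are the degenerate parameter situations. When $a=0$, after removing the forced lozenges created by the already-removed top-row left-tilted lozenges and the western boundary lozenges, the region $\mathcal{F}_{x,y,z}(0)$ should reduce to a weighted $\mathcal{G}$-type region (or to a region whose TGF is covered by Lemma \ref{Glem} after removing forced lozenges), so the claimed formula follows. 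When $y=0$, as indicated in Figure \ref{Onesixbase3}(a), one removes forced lozenges at the top and reduces to a previously enumerated region; similarly the $z=0$ case (Figure \ref{Onesixbase3}(c)) should reduce to a region of the type $H_d(\cdots)$ from \cite{CK3} or to a $\mathcal{G}$- or $\mathcal{R}$-type region, so that the base formula $F_1(x,y,z,a)$ checks out against the known product.

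For the induction step, I would apply Kuo condensation (Lemma \ref{Kuothm}) to the dual graph $G$ of the region obtained from $\mathcal{F}_{x,y,z}(a)$ by adding a band of unit triangles along the semi-triangular hole (the shaded band in Figure \ref{Onesixkuo4}), choosing the four vertices $u,v,w,s$ exactly as in the analogous proofs: $u$ the up-pointing triangle at the bottom of the band, $v$ the up-pointing triangle in the upper-right corner, and $w,s$ forming a bowtie in the lower-right corner. The six graphs $G-\{v\}$, $G-\{u,s,w\}$, $G-\{u\}$, $G-\{v,w,s\}$, $G-\{w\}$, $G-\{u,v,s\}$ each, after forcing (and tracking the weights $1/2$ carried by the boundary lozenges, which will produce explicit powers of $2$), collapse to $\mathcal{F}$-type regions with shifted parameters — something like $\mathcal{F}_{x+3,y,z}(a-1)$, $\mathcal{F}_{x,y,z-1}(a)$, $\mathcal{F}_{x,y,z}(a)$, $\mathcal{F}_{x+3,y,z-1}(a-1)$, $\mathcal{F}_{x,y+1,z}(a-1)$, $\mathcal{F}_{x+3,y-1,z-1}(a)$ — the precise shifts and weight factors to be read off from Figures \ref{Onesixkuo4b} and \ref{Onesixkuo6}. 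Substituting into \eqref{Kuoeq} yields a recurrence for $\M(\mathcal{F}_{x,y,z}(a))$ in which every region except $\mathcal{F}_{x,y,z}(a)$ has smaller $z+a$, so by the induction hypothesis those are given by $F_1$, and it remains to check that $F_1(x,y,z,a)$ itself satisfies the same recurrence; this algebraic verification is deferred to Section \ref{verification}, exactly as in the proofs of Theorems \ref{R1thm} and \ref{R2thm}.

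The main obstacle I anticipate is twofold. First, the case $z=1$ appears to require a genuinely different application of Kuo condensation (Figure \ref{Onesixkuo6} versus \ref{Onesixkuo4b}), presumably because when $z=1$ the ``bowtie'' in the lower-right corner degenerates or the forced-lozenge regions at the bottom of the hole behave differently; I would need to verify that the $z=1$ recurrence together with the $z=0$ base case correctly pins down $F_1(x,y,1,a)$, and then feed this into the main induction so that the $z>1$ recurrence can be applied for all larger $z$. Second — and this is the heavy lifting — the algebraic verification that the closed form $F_1$ satisfies the recurrence is considerably more delicate than in the $\mathcal{R}$ case: $F_1(x,y,z,a)$ in \eqref{F1form} involves floor functions ($\lfloor a/3\rfloor$, $\lfloor (a-1)/3\rfloor$) and skipped Pochhammer symbols $[x]_n$, so the ratio identities one must check will split into residue classes of $a$ modulo small integers, and one must carefully track which parameter shifts ($a\mapsto a-1$, $z\mapsto z-1$, $x\mapsto x+3$, $y\mapsto y\pm1$) move a term across a floor-function threshold. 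I expect the cleanest route is to form the ratio of both sides of the recurrence to $F_1(x,y,z,a)\cdot F_1(x+3,y,z-1,a-1)$ (say), reducing everything to a rational-function identity in $x,y,z,a$ within each residue class, and then verify that identity by the standard simplification of Pochhammer quotients; this is routine in principle but lengthy, which is why it is quarantined in Section \ref{verification}.
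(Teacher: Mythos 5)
Your high-level strategy (induction, Kuo condensation on a suitably augmented region, algebraic verification of the resulting recurrence deferred to Section \ref{verification}) matches the paper's, but the concrete recurrence you propose is not the one the paper derives, and this is where the gap lies. You transplant the $\mathcal{R}$-type setup wholesale: a band added along the semi-triangular hole, the vertices $u,v,w,s$ placed as in Figure \ref{Onesixkuo1}, and the resulting recurrence relating $\mathcal{F}_{x,y,z}(a)$ to regions with parameters $(x+3,y,z,a-1)$, $(x,y,z-1,a)$, $(x,y+1,z,a-1)$, $(x+3,y-1,z-1,a)$, with induction on $z+a$. The paper instead applies Kuo condensation to the graph obtained by adding \emph{two layers of unit triangles along the southeastern side} (Figure \ref{Onesixkuo4}), inducts on $y+z$, and obtains the structurally different recurrence (\ref{Fthmeq1}), in which the hole parameter \emph{increases} ($a\to a+1$), $y$ decreases by one, and $z$ drops by up to two. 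The reason your transplant is doubtful is the modified boundary of $\mathcal{F}_{x,y,z}(a)$: the western boundary lozenges above the hole and the left-tilted lozenges of the top row have already been removed, and the western boundary lozenges below the hole carry weight $1/2$. Removing the $v$-triangle at the upper-right corner therefore does not force the same staircase of lozenges that shrinks the hole and shifts $x\mapsto x+3$ in the $\mathcal{R}$-case, so there is no reason the six minors collapse to $\mathcal{F}$-type regions with the shifts you list. You hedge by saying the shifts are ``to be read off from the figures,'' but the figures in fact encode a different condensation, so the recurrence you would go on to verify algebraically is not the one your geometric setup produces.

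Two smaller discrepancies. First, your base cases are organized around $a=0$, $y=0$, $z=0$; the paper's induction on $y+z$ needs only $y=0$ (reducing to ${}_*\mathcal{G}_{z,x+3a-1}$, Figure \ref{Onesixbase3}(a)) and $z=0$ (reducing to the region $H_{d}(y+2a-1,y,x+y+2a-2)$ of \cite{CK3}, Figure \ref{Onesixbase3}(c)); there is no separate $a=0$ base case. Second, you correctly anticipate that $z=1$ needs special treatment, but the actual mechanism is that the two terms $\mathcal{F}_{x,y,-1}(a+1)$ and $\mathcal{F}_{x+3,y-1,-1}(a+1)$ in (\ref{Fthmeq1}) degenerate and are replaced by $H_d$-regions from \cite{CK3} with known product formulas (recurrence (\ref{Fthmeq2})), not merely a ``different bowtie placement.'' To repair your argument you would need to either justify combinatorially that your hole-side condensation really does yield $\mathcal{F}$-type minors (which the geometry resists), or switch to the paper's southeastern-layer condensation and the recurrence (\ref{Fthmeq1}).
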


\begin{thm}\label{F2thm}Assume that $x,y,z,a$  are non-negative integers. Then
\begin{align}\label{Fbeq}
\M(\overline{\mathcal{F}}_{x,y,z}(a))&=F_2(x,y,z,a),
\end{align}
where $F_2(x,y,z,a)$ is defined in (\ref{F2form}).
\end{thm}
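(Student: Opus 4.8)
The plan is to prove Theorem \ref{F2thm} by the same induction-plus-Kuo-condensation scheme that was used for Theorems \ref{R1thm} and \ref{F1thm}, with the induction running on $z+a$. First I would set up the base cases. When $a=0$ the semi-triangular hole disappears and $\overline{\mathcal{F}}_{x,y,z}(0)$, after removing the forced boundary lozenges, reduces to one of the doubly-weighted $\mathcal{G}$-type (or $\mathcal{R}$-type) regions already enumerated in Lemma \ref{Glem} (resp.\ Theorem \ref{R2thm}); one checks that $F_2(x,y,z,0)$ agrees with that formula. When $y=0$ the region collapses (Figure \ref{Onesixbase3}(b)) so that, after forcing, it becomes a known region from \cite{CiuPP} or \cite{CK3}, and one verifies $F_2(x,0,z,a)$ matches. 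When $z=0$ (Figure \ref{Onesixbase3}(d)) similarly the region degenerates to a previously enumerated one, yielding $F_2(x,y,0,a)$. These are routine specializations of formula (\ref{F2form}).

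For the induction step, assume $y,z,a>0$ and that (\ref{Fbeq}) holds for all $\overline{\mathcal{F}}$-regions with smaller $z+a$. As in Figure \ref{Onesixkuo4}, add a band of unit triangles along the semi-triangular hole to the dual graph $G$ of $\overline{\mathcal{F}}_{x,y,z}(a)$, weighting the leftmost vertical lozenge of the band by $1/2$ as dictated by the boundary-weight convention, and pick $u,v,w,s$ exactly as in the $\mathcal{F}$-type argument: $u$ at the bottom of the band, $v$ in a top corner, and $w,s$ forming a bowtie in a bottom corner. Reading off the forced lozenges (some carrying weights $1/2$ or $1/4$, since these touch the weighted boundary), one identifies each of the six graphs $G-\{v\}$, $G-\{u,w,s\}$, $G-\{u\}$, $G-\{v,w,s\}$, $G-\{w\}$, $G-\{u,v,s\}$ with a power of $1/2$ times $\M(\overline{\mathcal{F}}_{\bullet})$ for regions of strictly smaller $z+a$ (except the one equal to $\overline{\mathcal{F}}_{x,y,z}(a)$ itself). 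Substituting into Kuo's identity (\ref{Kuoeq}) and cancelling the common powers of $1/2$ gives a three-term recurrence of the same shape as (\ref{Rthmeq7b}) and (\ref{Rthmeq7}), namely
\begin{align*}
&\M(\overline{\mathcal{F}}_{x+3,y,z}(a-1))\M(\overline{\mathcal{F}}_{x,y,z-1}(a))\\
&\qquad=\M(\overline{\mathcal{F}}_{x,y,z}(a))\M(\overline{\mathcal{F}}_{x+3,y,z-1}(a-1))+\M(\overline{\mathcal{F}}_{x,y+1,z}(a-1))\M(\overline{\mathcal{F}}_{x+3,y-1,z-1}(a)),
\end{align*}
possibly with slightly shifted parameters inherited from the precise construction of $\overline{\mathcal{F}}$. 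As in Figures \ref{Onesixkuo4b} and \ref{Onesixkuo6}, the case $z=1$ may require a separate bookkeeping of forced lozenges (because removing the bowtie near the bottom interacts with the cleanup at the base of the hole), so I would treat $z=1$ and $z>1$ as two sub-cases, each producing a recurrence of the above form.

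It then remains, exactly as promised for Theorems \ref{R1thm} and \ref{F1thm} and carried out in Section \ref{verification}, to check that the closed form $F_2(x,y,z,a)$ of (\ref{F2form}) satisfies this recurrence: substitute the product formula into each term, divide through, and reduce the resulting identity to an elementary rational-function identity in $x,y,z,a$ using the Pochhammer and skipped-Pochhammer manipulation rules. By the induction principle this establishes (\ref{Fbeq}). The main obstacle I anticipate is not the condensation step — that is essentially mechanical — but this final algebraic verification: $F_2$ is a long product with several floor functions and auxiliary factors (the $\lfloor (y+1)/3\rfloor$, $\lfloor y/3\rfloor$ products), so matching it against the recurrence will require care with parity splits and with the $z=1$ boundary of the recurrence; getting the base cases $y=0$ and $z=0$ to line up with the known regions from \cite{CiuPP}, \cite{CK3}, and Lemma \ref{Glem} is the other place where the floor-function factors must be pinned down precisely.
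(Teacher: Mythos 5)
Your overall strategy (induction plus Kuo condensation plus an algebraic verification that the closed form satisfies the resulting recurrence) is the paper's strategy, but the specific induction scheme and recurrence you propose do not match what the geometry of $\overline{\mathcal{F}}_{x,y,z}(a)$ actually produces, and this is a genuine gap rather than a bookkeeping issue. The paper does \emph{not} apply the $\mathcal{R}$-type condensation (band added along the semi-triangular hole, $u$ at the bottom of the band, bowtie $w,s$ in the lower-right corner); it applies the same setup as for $\mathcal{F}$-type regions, namely adding \emph{two layers of unit triangles along the southeastern side} (Figures \ref{Onesixkuo3}--\ref{Onesixkuo5}). The resulting recurrence (\ref{F2thmeq1}) is
\begin{align*}
\M(\overline{\mathcal{F}}_{x+3,y-1,z}(a))\M(\overline{\mathcal{F}}_{x,y,z-2}(a+1))&=\M(\overline{\mathcal{F}}_{x,y,z-1}(a+1))\M(\overline{\mathcal{F}}_{x+3,y-1,z-1}(a))\\
&\quad+\M(\overline{\mathcal{F}}_{x,y,z}(a))\M(\overline{\mathcal{F}}_{x+3,y-1,z-2}(a+1)),
\end{align*}
in which the $a$-parameter \emph{increases} in several terms. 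Consequently induction on $z+a$, as you propose, fails: the term $\overline{\mathcal{F}}_{x,y,z-1}(a+1)$ has exactly the same value of $z+a$ as the target region, so the induction hypothesis cannot be applied to it. The correct induction variable is $y+z$ (every non-target term has $y+z$ strictly smaller), with base cases $y=0$ and $z=0$ only; your extra base case $a=0$ is an artifact of the wrong induction parameter. Your claim that the condensation step is ``essentially mechanical'' understates the issue: the choice and placement of the four Kuo vertices is the crux, and the $\mathcal{R}$-type placement you describe sits precisely where $\overline{\mathcal{F}}$ differs from $\mathcal{R}$ (the hole is indexed from the bottom, the western boundary lozenges below the hole are removed, and the bottom row has its right-tilted lozenges deleted), so there is no reason the forced-lozenge analysis would return $\overline{\mathcal{F}}$-regions with the parameter shifts you wrote down.

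Two smaller points. First, your $z=1$ subcase is special for a different reason than you suggest: since $z-2$ becomes negative, two of the six regions in the recurrence must be replaced by (weighted) $H_d$-regions from \cite{CK3}, exactly as in (\ref{F2thmeq2}); it is not an interaction between the bowtie and the cleanup at the hole. Second, the base cases are more specific than ``a known region from \cite{CiuPP} or \cite{CK3}'': for $y=0$ one removes forced lozenges, $2a+1$ of which carry weight $1/2$, to obtain $\frac{1}{2^{2a+1}}\M({}^*\mathcal{G}_{z,x+3a+1})$ via Lemma \ref{Glem}, and for $z=0$ the region is the weighted version ${}_*^*H_d(y+2a+1,y,x+y+2a)$ of the region in \cite[Theorem 1.2]{CK3}. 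With the induction variable, the Kuo placement, and the recurrence corrected as above, the rest of your outline (substitute $F_2$ into the recurrence and verify the resulting rational-function identity) matches the paper.
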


\begin{proof}[Proof of Theorem \ref{F1thm}]
We prove the tiling formula (\ref{Feq}) by induction on $y+z$.

The base cases are the situations when one of the parameters $y$ and $z$ is zero. If $y=0$, then there are several forced lozenges on the top of the region. By removing these forced lozenges we get the region ${}_*\mathcal{G}_{z,x+3a-1}$ (see Figure \ref{Onesixbase3}(a)). Thus, we have
\begin{equation}
\M(\mathcal{F}_{x,0,z}(a))=\M({}_*\mathcal{G}_{z,x+3a-1}),
\end{equation}
 and (\ref{Feq}) follows from Lemma \ref{Glem}. If $z=0$, then  our region is exactly the region $H_{d}(y+2a-1,y,x+y+2a-2)$ in Theorem 1.1 \cite{CK3} (see Figure \ref{Onesixbase3}(c)), and (\ref{Feq}) also follows.

For the induction step, we assume that $y,z>0$ and that (\ref{Feq}) holds for any $\mathcal{F}$-type regions with the sum of the $y$- and $z$-parameters strictly less than $y+z$. We apply Kuo condensation, Lemma \ref{Kuothm}, to the dual graph $G$ of the region $\mathcal{R}$ obtained from $\mathcal{F}_{x,y,z}(a)$ by adding two layers of unit triangles along the southeastern side (see the shaded portion in Figure \ref{Onesixkuo4}). The four vertices $u,v,w,s$ correspond to the black triangles with the same labels. For $z>1$, Figure \ref{Onesixkuo4b} and Lemma \ref{Kuothm} show us that the product of the TGFs of the two regions in the top row is equal to the product of the TGFs of the two regions in the middle row plus the product of the TGFs of the two regions in the bottom row. In particular, we get for $z>1$
\begin{align}\label{Fthmeq1}
\M(\mathcal{F}_{x+3,y-1,z}(a))\M(\mathcal{F}_{x,y,z-2}(a+1))=&\M(\mathcal{F}_{x,y,z-1}(a+1))\M(\mathcal{F}_{x+3,y-1,z-1}(a))\notag\\&+\M(\mathcal{F}_{x,y,z}(a))\M(\mathcal{F}_{x+3,y-1,z-2}(a+1)).
\end{align}
When $z=1$, Figure \ref{Onesixkuo6} gives us a new recurrence where the regions  $\mathcal{F}_{x,y,z-2}(a+1)$ and $\mathcal{F}_{x+3,y-1,z-2}(a+1)$ in the recurrence (\ref{Fthmeq1}) are respectively replaced by the regions $H_{d}(2a+y,y,x+y+2a-1)$  and $H_{d}(2a+y-1,y-1,x+y+2a+1)$ in \cite[Theorem 1.1]{CK3}. In particular, we have when $z=1$
\begin{align}\label{Fthmeq2}
\M(\mathcal{F}_{x+3,y-1,1}(a))\M(H_{d}(2a+y,y,x+y+2a-1))=&\M(\mathcal{F}_{x,y,0}(a+1))\M(\mathcal{F}_{x+3,y-1,0}(a))\notag\\&+\M(\mathcal{F}_{x,y,1}(a))\M(H_{d}(2a+y-1,y-1,x+y+2a+1)).
\end{align}
Verification that the expression on the right-hand side of (\ref{Feq}) satisfies recurrence (\ref{Fthmeq1}) is done in Section \ref{verification}.
For the $z=1$ case, the verification can be done similarly.
\end{proof}

\begin{figure}
  \centering
  \includegraphics[width=8cm]{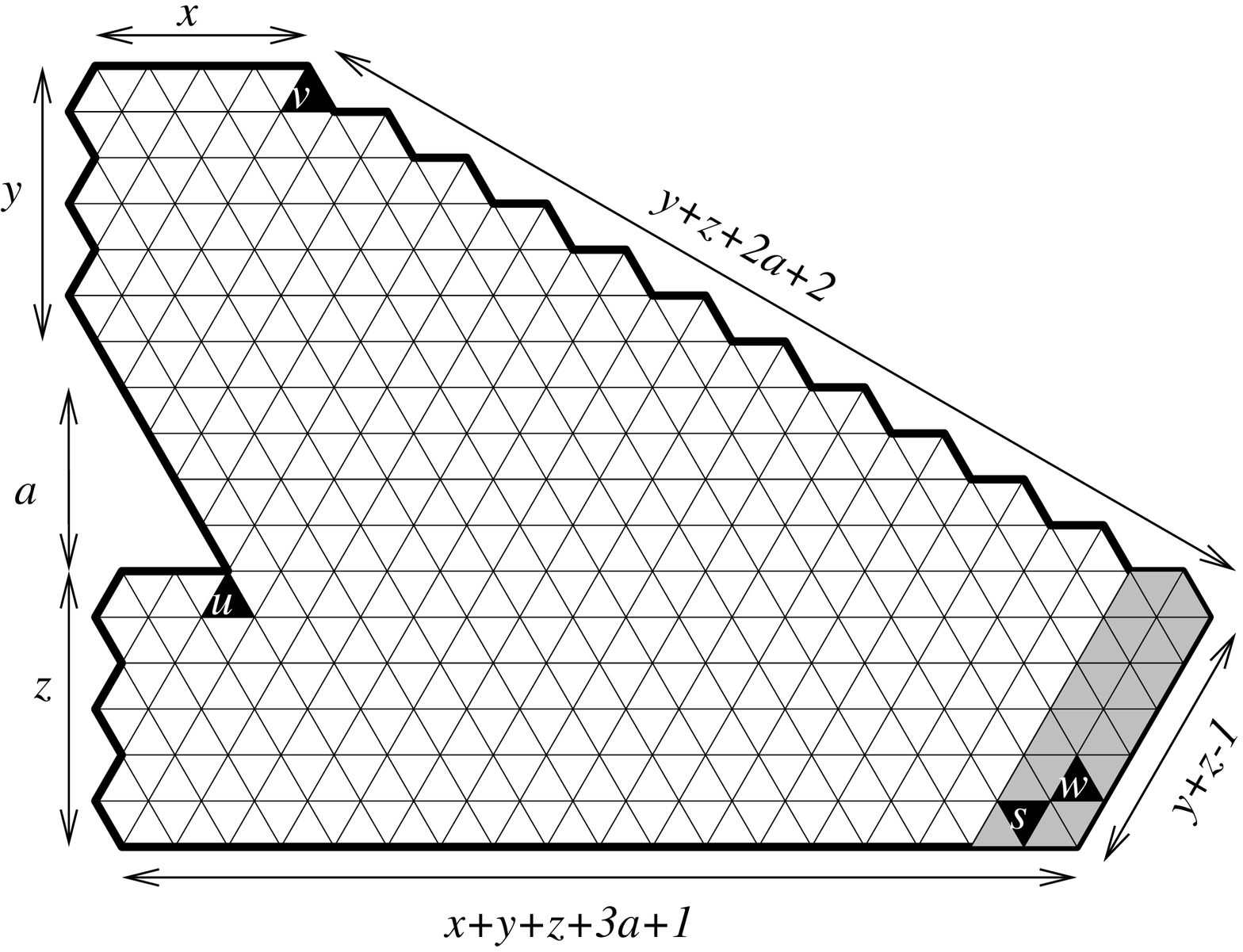}
  \caption{Applying Kuo condensation to an $\overline{F}$-type region.}\label{Onesixkuo3}
\end{figure}

\begin{figure}
  \centering
  \includegraphics[width=14cm]{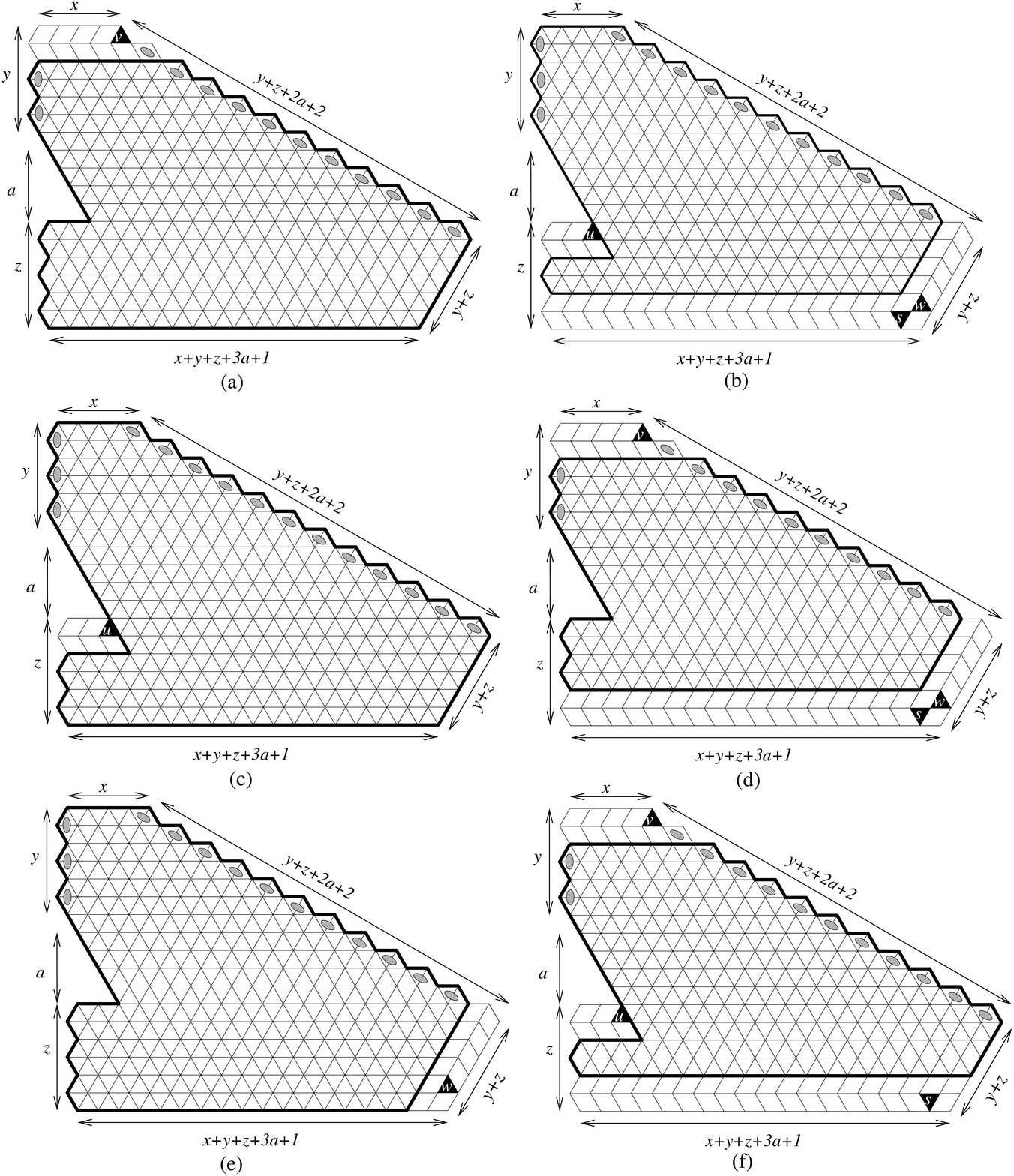}
  \caption{Obtaining a recurrence for tiling generating functions of $\overline{\mathcal{F}}$-type regions when $z>1$.}\label{Onesixkuo3b}
\end{figure}

\begin{figure}
  \centering
  \includegraphics[width=13cm]{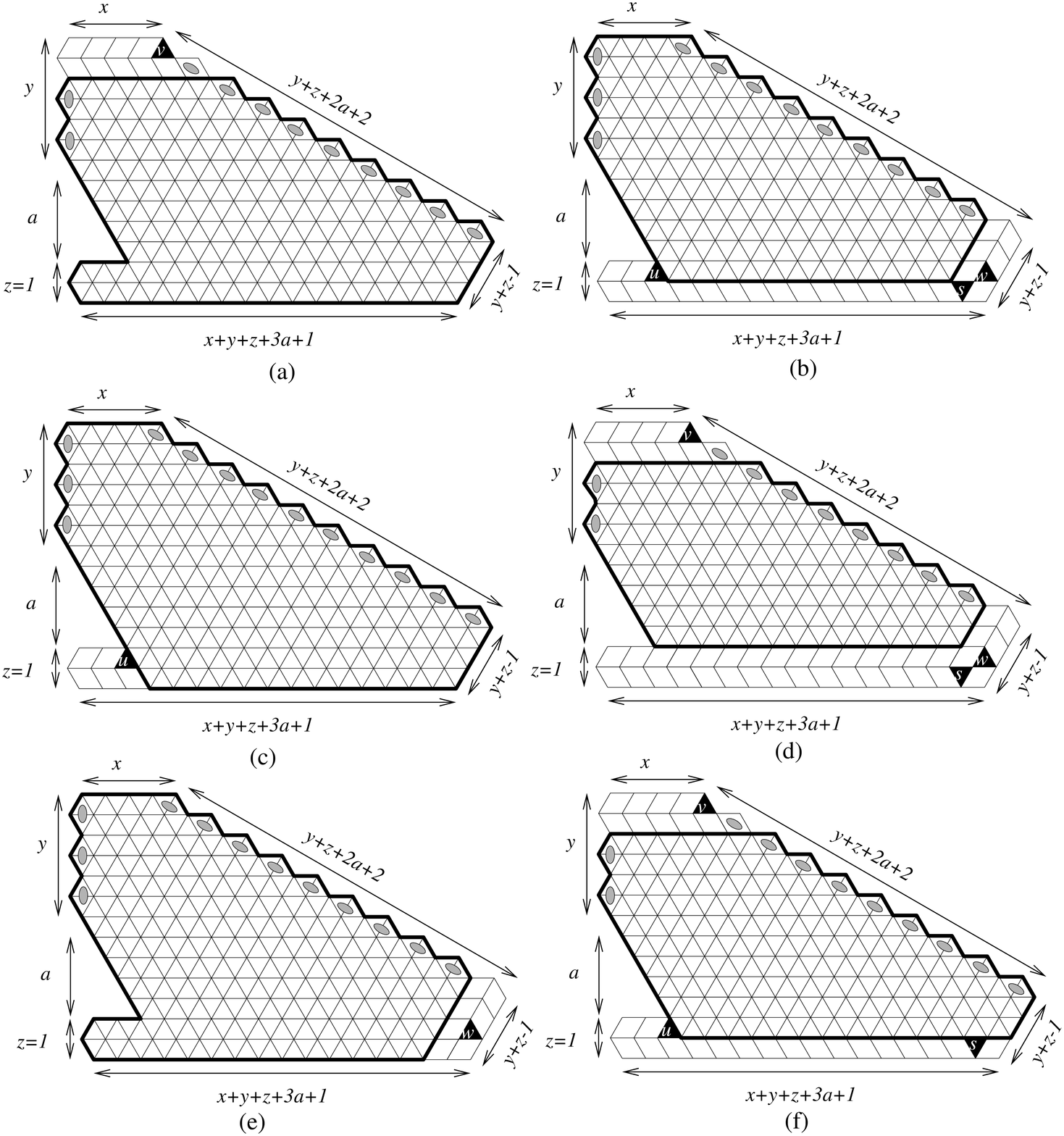}
  \caption{Obtaining a recurrence for tiling generating functions of $\overline{\mathcal{F}}$-type regions when $z=1$.}\label{Onesixkuo5}
\end{figure}

\begin{proof}[Proof of Theorem \ref{F2thm}]
This theorem can be proven by induction on $y+z$ in the same way as Theorem \ref{F1thm}.

The base cases are still the cases where one of $y$ and $z$ is zero. When $y=0$, by removing forced lozenges ($2a+1$ of them have weight $1/2$), we obtain the region ${}^*\mathcal{G}_{z,x+3a+1}$ (see Figure \ref{Onesixbase3}(b)). Thus
\begin{equation}
 \M(\mathcal{F}_{x,0,z}(a)=\frac{1}{2^{2a+1}}\M({}^*\mathcal{G}_{z,x+3a+1}),
\end{equation}
and our theorem follows from Lemma \ref{Glem}. For the case $z=0$, our region becomes the the weighted version ${}_*^*H_{d}(y+2a+1,y,x+y+2a)$ of the region $H_{d}(y+2a+1,y,x+y+2a)$ in \cite[Theorem 1.2]{CK3} (see Figure \ref{Onesixbase3}(d)), and our theorem follows.

For the induction step, we apply Kuo condensation here similarly to our application in the case of $\mathcal{F}$-type regions, as can be seen in Figures \ref{Onesixkuo3}, \ref{Onesixkuo3b} and \ref{Onesixkuo5}. In particular, for $z>1$, we have the recurrence
\begin{align}\label{F2thmeq1}
\M(\overline{\mathcal{F}}_{x+3,y-1,z}(a))\M(\overline{\mathcal{F}}_{x,y,z-2}(a+1))=&\M(\overline{\mathcal{F}}_{x,y,z-1}(a+1))\M(\overline{\mathcal{F}}_{x+3,y-1,z-1}(a))\notag\\&
+\M(\overline{\mathcal{F}}_{x,y,z}(a))\M(\overline{\mathcal{F}}_{x+3,y-1,z-2}(a+1)),
\end{align}
and for $z=1$
\begin{align}\label{F2thmeq2}
\M(\overline{\mathcal{F}}_{x+3,y-1,1}(a))&\M({}_*^*H_{d}(2a+1+y,y,x+y+2a+1))=\M(\overline{\mathcal{F}}_{x,y,0}(a+1))\M(\overline{\mathcal{F}}_{x+3,y-1,0}(a))\notag\\&+\M(\overline{\mathcal{F}}_{x,y,1}(a))\M(H_{d}(2a+y,y-1,x+y+2a+3)).
\end{align}
Checking that $F_2(x,y,z,a)$ satisfies these recurrences is analogous to the work done in the proof of Theorem \ref{F1thm}.
\end{proof}

\bigskip

A region in our third family is obtained from a pentagonal region with sides of length $y+z+a-1,\ x,\ y+z+2a-1,\ y+z-1,\ x+y+z+3a-2$  (where $y+z\geq 1$) by removing the $(y+a)$-th through $(y+2a-1)$-th up-pointing unit triangles on the northeastern side (counting from top to bottom), and then removing forced lozenges. Denote by $\mathcal{E}_{x,y,z}(a)$ the resulting region (see Figure \ref{onesix2}(a)).  The number of tilings of this region is given by a nice product formula.

\begin{figure}\centering
\includegraphics[width=13cm]{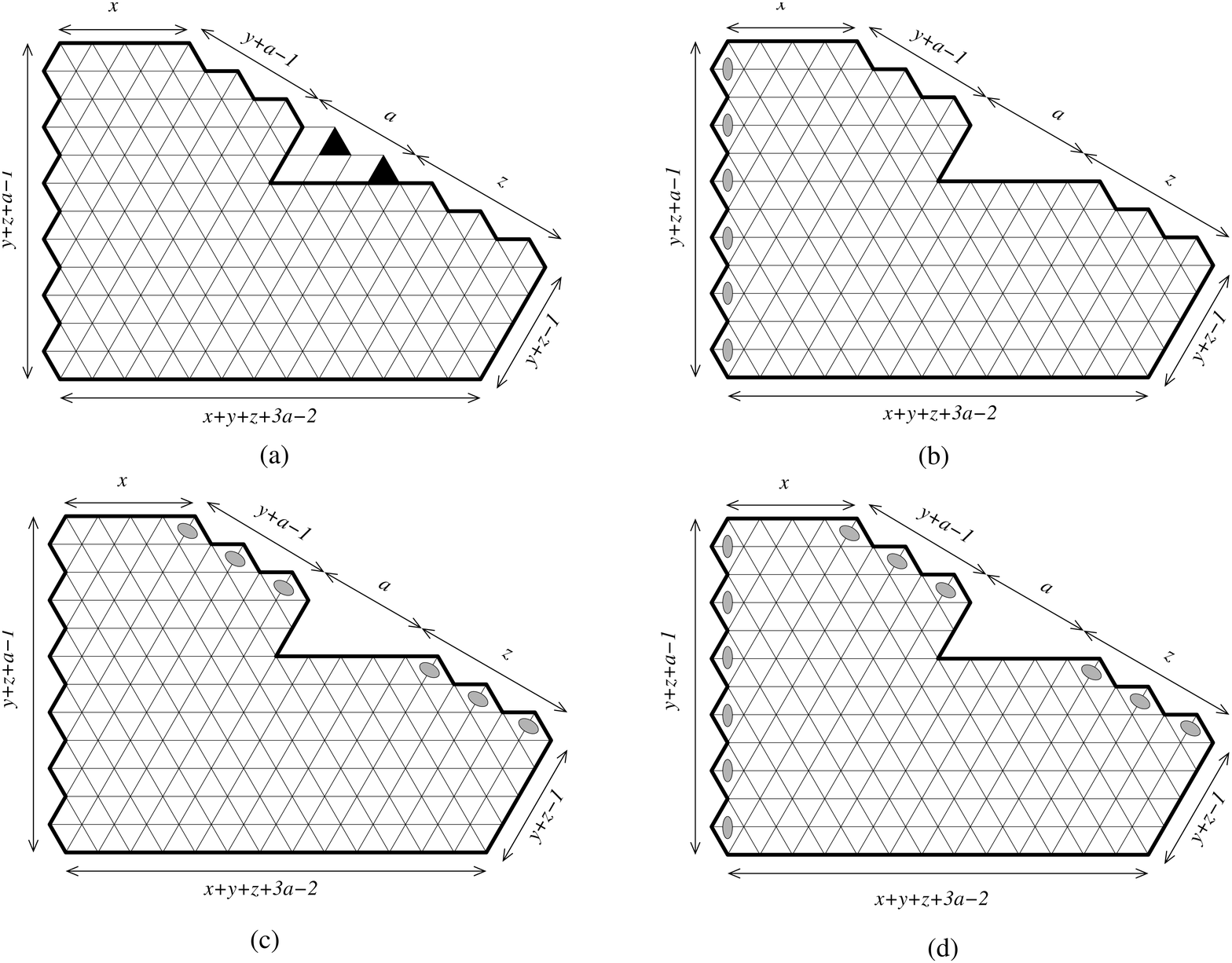}
\caption{The regions (a) $\mathcal{E}_{4,2,3}(2)$, (b)  ${}_*\mathcal{E}_{4,2,3}(2)$, (c)${}^*\mathcal{E}_{4,2,3}(2)$, and  (d)${}_*^*\mathcal{E}_{4,2,3}(2)$. The lozenges with shaded cores have weight $1/2$.}\label{onesix2}
\end{figure}

\begin{thm}\label{E1thm}
Assume that $x,y,z,a$  are non-negative integers. Then the number of tilings of the region $\mathcal{E}_{x,y,z}(a)$ is given by the product $E_1(x,y,z,a)$ in (\ref{E1aform}) and  (\ref{E1bform}).
\end{thm}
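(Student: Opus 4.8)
The plan is to prove Theorem \ref{E1thm} by induction on $z+a$, in exact parallel with the proofs of Theorems \ref{R1thm} and \ref{F1thm}. The base cases are the situations in which one of $y$, $z$, $a$ vanishes. When $a=0$ no triangles are removed and $\mathcal{E}_{x,y,z}(0)$ is, up to a lattice reflection, a $\mathcal{G}$-type region, so the claim follows from Lemma \ref{Glem}; one checks directly that $E_1(x,y,z,0)$ collapses to $\M(\mathcal{G}_{y+z-1,x})$. When $y=0$, the lozenges forced by the triangular hole on the northeastern side split $\mathcal{E}_{x,0,z}(a)$ --- via Lemma \ref{RS} --- into two $\mathcal{G}$-type pieces, one above and one below the hole, yielding $\M(\mathcal{E}_{x,0,z}(a))=\M(\mathcal{G}_{a-1,x})\,\M(\mathcal{G}_{z-1,x+3a})$, which is precisely the product in (\ref{E1aform}). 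When $z=0$ (so that $y\geq 1$), removing forced lozenges turns the region into one of the $H_d$-type regions enumerated in \cite{CK3}, and (\ref{E1bform}) follows from the formula given there.

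For the induction step one assumes $y,z,a>0$ and applies Kuo condensation (Lemma \ref{Kuothm}) to the dual graph $G$ of the region obtained from $\mathcal{E}_{x,y,z}(a)$ by attaching a band of unit triangles along the side of the semi-triangular hole, with the four vertices $u,v,w,s$ placed as in the proof of Theorem \ref{R1thm}: $u$ at the bottom of the band, $v$ at one corner of the region, and $w,s$ forming a bowtie at another corner. Reading off the forced lozenges in each of the six graphs $G-\{v\}$, $G-\{u,w,s\}$, $G-\{u\}$, $G-\{v,w,s\}$, $G-\{w\}$, $G-\{u,v,s\}$ identifies all of them --- except $G-\{u\}$, which returns $\mathcal{E}_{x,y,z}(a)$ itself --- with $\mathcal{E}$-type regions whose $(z+a)$-values are strictly smaller. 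Substituting into (\ref{Kuoeq}) then produces a three-term recurrence for $\M(\mathcal{E}_{x,y,z}(a))$ in terms of $\mathcal{E}$-regions already handled by the inductive hypothesis, with parameter shifts of the same flavor as in (\ref{Rthmeq7}).

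The remaining task --- carried out in Section \ref{verification} --- is to check that the right-hand side of (\ref{E1aform})--(\ref{E1bform}) satisfies this recurrence; after cancellation this should reduce to an identity among Pochhammer and skipped-Pochhammer symbols, and the theorem then follows by the induction principle. I expect this algebraic verification to be the main obstacle: since $E_1$ has genuinely different closed forms for $y=0$ and $y>0$ and mixes $(\cdot)_n$ factors with $[\cdot]_n$ factors, the identity must be confirmed in several parameter ranges, and --- as happened in the proof of Theorem \ref{F1thm} --- it is possible that a small-$z$ boundary case (for instance $z=1$, where one region appearing in the recurrence degenerates into an $H_d$-region) requires a separate recurrence together with its own verification. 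The other delicate point is getting the forced-lozenge bookkeeping in the six Kuo subgraphs exactly right, so that the parameter shifts produced geometrically match those dictated by the formula.
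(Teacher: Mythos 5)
Your overall strategy (reduce to degenerate base cases, derive a three-term recurrence via Kuo condensation, and verify the product formula against it algebraically) is the same as the paper's, but the induction you set up does not close. The recurrence that actually comes out of Kuo condensation for the $\mathcal{E}$-regions is
\begin{align*}
\M(\mathcal{E}_{x,y,z}(a))\M(\mathcal{E}_{x+3,y,z-1}(a-1))&=\M(\mathcal{E}_{x+3,y,z}(a-1))\M(\mathcal{E}_{x,y,z-1}(a))\\
&\quad+\M(\mathcal{E}_{x,y+2,z-1}(a-1))\M(\mathcal{E}_{x+3,y-2,z}(a)),
\end{align*}
and the term $\M(\mathcal{E}_{x+3,y-2,z}(a))$ has exactly the same value of $z+a$ as the unknown $\M(\mathcal{E}_{x,y,z}(a))$; only its $y$-parameter has dropped, by $2$. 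So your assertion that the auxiliary graphs all correspond to $\mathcal{E}$-regions with strictly smaller $z+a$ is false, and the inductive hypothesis as you state it cannot be applied to that term. (The other new term, $\M(\mathcal{E}_{x,y+2,z-1}(a-1))$, has a \emph{larger} $y$-parameter, so you cannot rescue the argument by inducting on $y$ or on $y+z+a$ either.) The paper inducts on $y+z+2a$, which strictly decreases for every region on the right-hand side, and correspondingly must include $y=1$ among the base cases because $y$ drops in steps of $2$; your base-case list omits $y=1$, so even with the corrected induction variable the recursion would not terminate.

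Two smaller corrections to the geometric bookkeeping, which is precisely the part you flagged as delicate. First, the $z=0$ case of $\mathcal{E}_{x,y,z}(a)$ does not degenerate to an $H_d$-region of \cite{CK3} (that happens for the $\mathcal{R}$- and $\mathcal{F}$-families, whose holes sit on the western side); here the forced lozenges along the southeastern side reduce the region to $\mathcal{G}_{y+a-1,x}$, so every base case ($y=0$, $y=1$, $z=0$, $a=0$) is handled by Lemma \ref{Glem} alone and no separate small-$z$ recurrence of the kind needed for Theorem \ref{F1thm} arises. Second, since the hole of an $\mathcal{E}$-region lies on the northeastern side, the Kuo setup is not a verbatim copy of the $\mathcal{R}$-case: the paper attaches two layers of unit triangles to the left of the hole, and it is $G-\{v\}$ (not $G-\{u\}$) that returns the original region. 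These placement details are what produce the $y\mapsto y\pm2$ shifts in the recurrence, and overlooking them is what led to the faulty choice of induction variable.
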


%NOTE: (2i-1)!/(t+2i-1)!=1/(2i)_t

\begin{figure}\centering
\includegraphics[width=13cm]{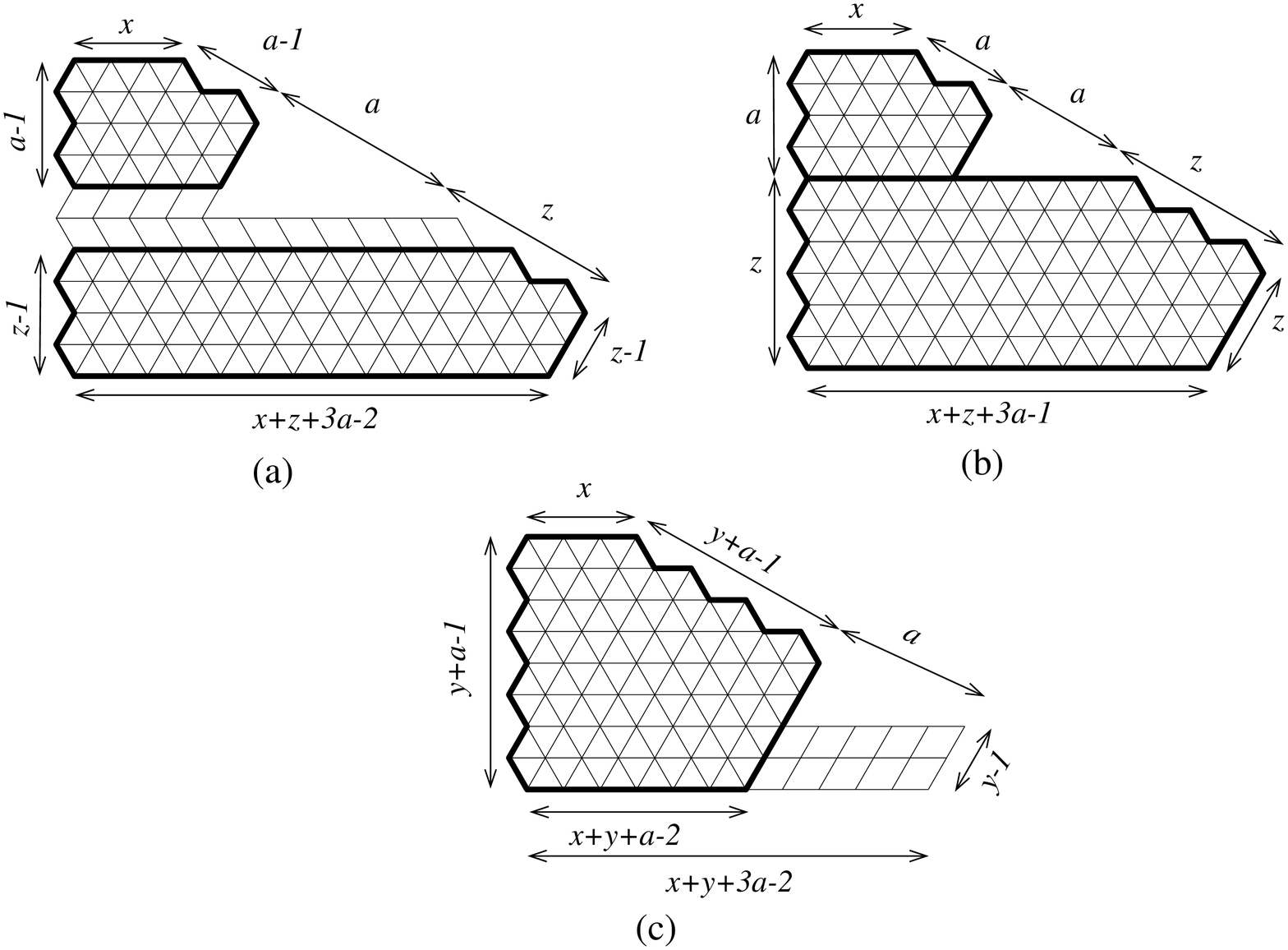}
\caption{The $\mathcal{E}$-type region in the cases:  (a)  when $y=0$, (b) when $y=1$, and when (c) $z=0$.}\label{Onesixbase2}
\end{figure}

\begin{figure}\centering
\includegraphics[width=8cm]{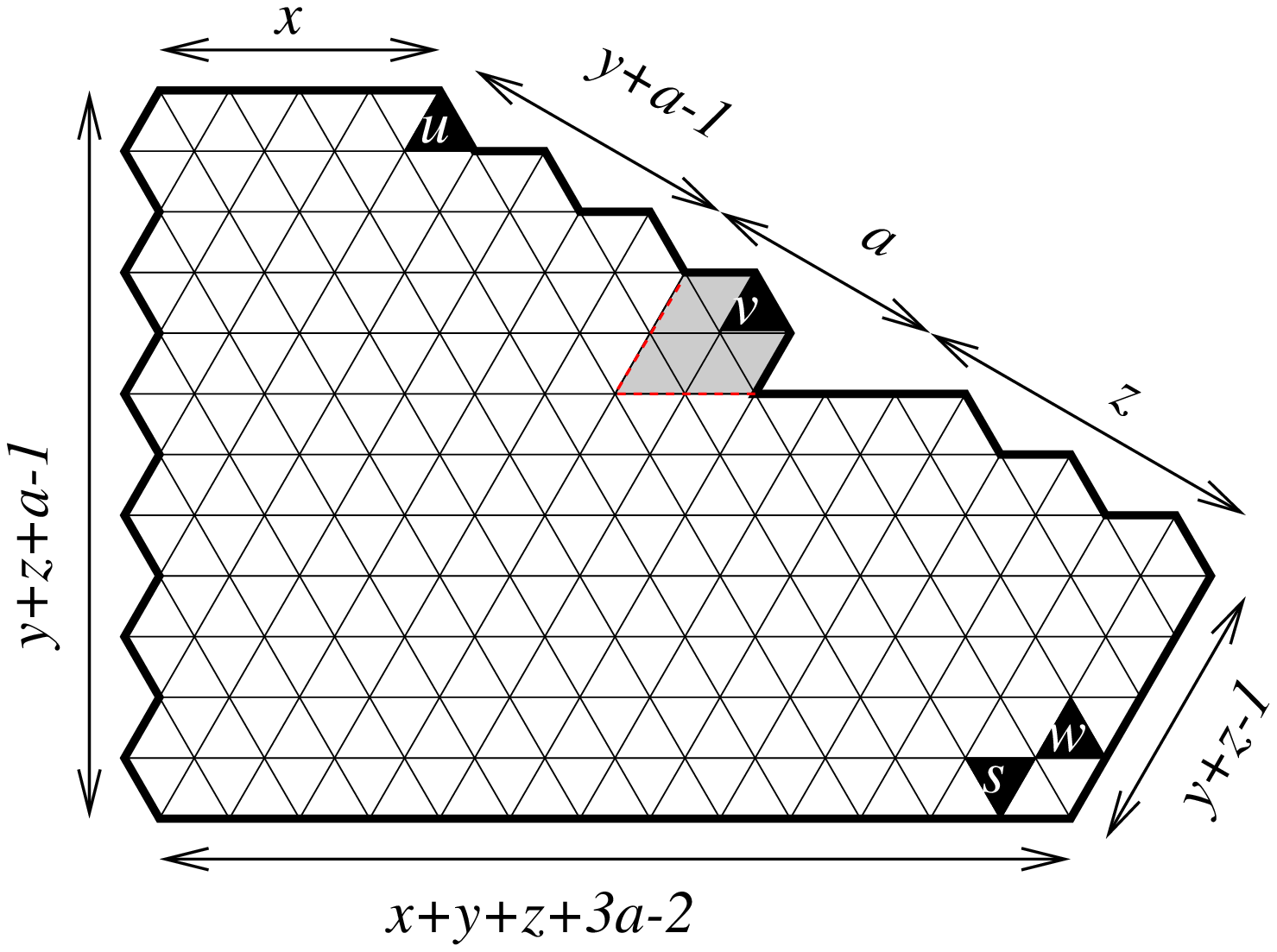}
\caption{Applying Kuo condensation to an $\mathcal{E}$-type region.}\label{Onesixkuo2}
\end{figure}

\begin{figure}\centering
\includegraphics[width=13cm]{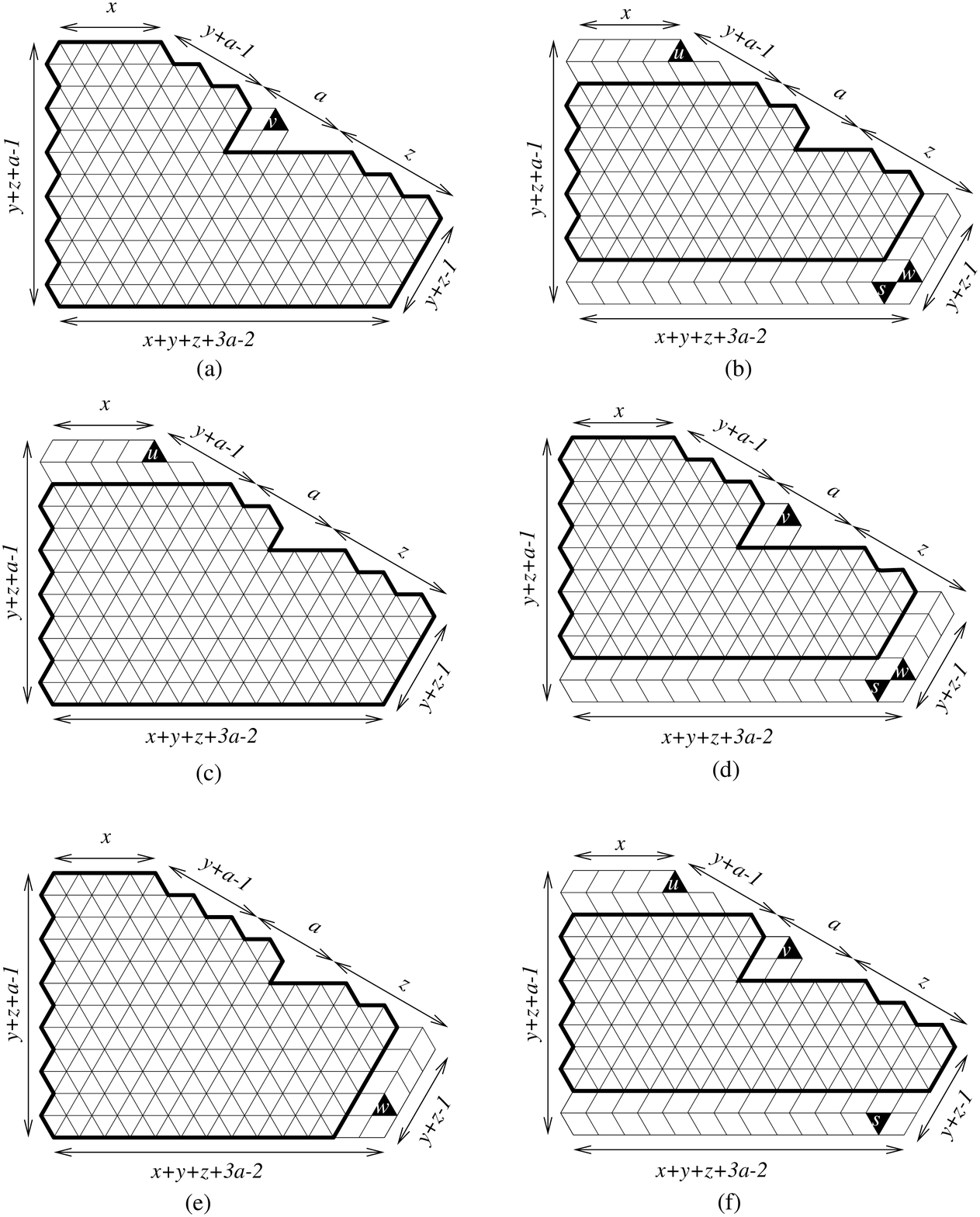}
\caption{Obtaining a recurrence for the number of tilings of $\mathcal{E}$-type regions.}\label{Onesixkuo2b}
\end{figure}

\begin{proof}
We prove the theorem by induction on $y+z+2a$. The base cases are the cases where $y=0,1$, $z=0$, or $a=0$.
If  $y=0$, by applying Lemma \ref{RS}, we can split up the region into two parts along the base of the semi-triangular hole.  Removing forced lozenges from the base of the  upper part, we get the region $\mathcal{G}_{a-1,x}$, and removing the forced lozenges from the top of the lower part, we get the region $\mathcal{G}_{z-1,x+3a}$  (see Figure \ref{Onesixbase2}(a)). Therefore
\begin{equation}
\M(\mathcal{E}_{x,0,z}(a))=\M(\mathcal{G}_{a-1,x})\M(\mathcal{G}_{z-1,x+3a}),
\end{equation}
and the theorem follows from Lemma \ref{Glem}. If $y=1$, we apply Lemma \ref{RS} as in  Figure \ref{Onesixbase2}(b), and divide our region into two $\mathcal{G}$-type regions, obtaining
\begin{equation}
\M(\mathcal{E}_{x,1,z}(a))=\M(\mathcal{G}_{a,x})\M(\mathcal{G}_{z,x+3a}).
\end{equation}
Then Lemma \ref{Glem} also implies our theorem in this case.

If $a=0$, then  our region is a $\mathcal{G}$-type region and theorem follows directly from Lemma \ref{Glem}. If $z=0$, our region has several forced lozenges along the southeastern side. Removing these forced lozenges, we get the region $\mathcal{G}_{a,x}$ (see Figure \ref{Onesixbase2}(c)). Thus
\begin{equation}
\M(\mathcal{E}_{x,y,0}(a))=\M(\mathcal{G}_{y+a-1,x})
\end{equation}
and the theorem follows again from Lemma \ref{Glem}.

For the induction step, we assume that $y>1$, $z,a$ are positive, and that our theorem is true for any $\mathcal{E}$-type region which has the sum of twice its $a$-parameter and its $y$- and $z$-parameters strictly less than $y+z+2a$.

We apply Kuo condensation, Lemma \ref{Kuothm}, to the dual graph $G$ of the region $\mathcal{R}$ obtained from $\mathcal{E}_{x,y,z}(a)$ by adding two layers on the left of the semi-triangular hole (see the shaded unit triangles together with the $v$-triangle in Figure \ref{Onesixkuo2}). The four vertices $u,v,w,s$ correspond to the four black triangles in Figure \ref{Onesixkuo2}. By considering forced lozenges yielded by the removal of these black triangles as shown in Figures \ref{Onesixkuo2b}(a)--(f), we get respectively
\begin{equation}\label{Ethmeq1}
\M(G-\{v\})=\M(\mathcal{E}_{x,y,z}(a)),
\end{equation}
\begin{equation}\label{Ethmeq2}
\M(G-\{u,s,w\})=\M(\mathcal{E}_{x+3,y,z-1}(a-1)),
\end{equation}
\begin{equation}\label{Ethmeq3}
\M(G-\{u\})=\M(\mathcal{E}_{x+3,y,z}(a-1)),
\end{equation}
\begin{equation}\label{Ethmeq4}
\M(G-\{v,w,s\})=\M(\mathcal{E}_{x,y,z-1}(a)),
\end{equation}
\begin{equation}\label{Ethmeq5}
\M(G-\{w\})=\M(\mathcal{E}_{x,y+2,z-1}(a-1)),
\end{equation}
and
\begin{equation}\label{Ethmeq6}
\M(G-\{u,v,s\})=\M(\mathcal{E}_{x+3,y-2,z}(a)).
\end{equation}
By equation (\ref{Kuoeq}) in Lemma \ref{Kuothm}, we have
\begin{align}\label{Ethmeq7}
\M(\mathcal{E}_{x,y,z}(a))\M(\mathcal{E}_{x+3,y,z-1}(a-1))=&\M(\mathcal{E}_{x+3,y,z}(a-1))\M(\mathcal{E}_{x,y,z-1}(a))\notag\\
&+\M(\mathcal{E}_{x,y+2,z-1}(a-1))\M(\mathcal{E}_{x+3,y-2,z}(a)).
\end{align}
One readily sees that each region in (\ref{Ethmeq7}), except for $\mathcal{E}_{x,y,z}(a)$, has the sum of twice its $a$-parameter and its $y$- and $z$-parameters strictly less than $y+z+2a$. Thus,  by induction hypothesis, the numbers of tilings of these regions are given by the product $E_1(x,y,z,a)$. That $E_1(x,y,z,a)$ satisfies the same recurrence is shown in Section \ref{verification}.
\end{proof}

 We also consider three weighted variations  ${}_*\mathcal{E}_{x,y,z}(a)$, ${}^*\mathcal{E}_{x,y,z}(a)$, and ${}_*^*\mathcal{E}_{x,y,z}(a)$ of the region $\mathcal{E}_{x,y,z}(a)$ by assigning weight $1/2$ to the western boundary lozenges, to the northeastern boundary lozenges, and to both the western and northeastern boundary lozenges, respectively (see Figures \ref{onesix2}(a)--(d)). The TGFs of these weighted regions are given below.

\begin{thm}\label{E2thm}
Assume that $x,y,z,a$  are non-negative integers. Then
\begin{equation}\label{Ebeqex}
\M({}_*^*\mathcal{E}_{x,y,z}(a))= E_2(x,y,z,a)
\end{equation}
where $E_2(x,y,z,a)$ is defined in (\ref{E2aform}) and (\ref{E2bform}).
\end{thm}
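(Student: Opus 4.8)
The plan is to prove Theorem \ref{E2thm} by exactly the strategy used for Theorem \ref{E1thm} and for the weighted regions in Theorem \ref{R2thm}: induction on $y+z+2a$, with base cases reduced to weighted $\mathcal{G}$-type regions via Lemma \ref{RS} and Lemma \ref{Glem}, and the inductive step driven by Kuo condensation (Lemma \ref{Kuothm}) applied to the same dual graph and the same four vertices as in the proof of Theorem \ref{E1thm}.

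First I would handle the base cases $y=0$, $y=1$, $z=0$, and $a=0$. When $y=0$ or $y=1$, Lemma \ref{RS} splits ${}_*^*\mathcal{E}_{x,y,z}(a)$ along the base of the semi-triangular hole into two pieces; removing forced lozenges from each piece (keeping track of which forced lozenges still carry weight $\tfrac12$) produces two ${}_*^*\mathcal{G}$-type regions, so that $\M({}_*^*\mathcal{E}_{x,0,z}(a))$ equals, up to an explicit power of $2$, the product $\M({}_*^*\mathcal{G}_{a-1,x})\,\M({}_*^*\mathcal{G}_{z-1,x+3a})$, and likewise for $y=1$; formula (\ref{E2aform}) then follows from the last formula of Lemma \ref{Glem}. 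When $a=0$ the region is itself a ${}_*^*\mathcal{G}$-type region, and when $z=0$ it becomes one after forcing. The point requiring care here is that the boundary weights may produce an overall power of $\tfrac12$ that must be matched against the prefactor of (\ref{E2aform})--(\ref{E2bform}), exactly as in identities such as $\M(\mathcal{F}_{x,0,z}(a))=\tfrac{1}{2^{2a+1}}\M({}^*\mathcal{G}_{z,x+3a+1})$ in the proof of Theorem \ref{F2thm}.

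For the inductive step, assuming $y>1$ and $z,a>0$, I would apply Lemma \ref{Kuothm} to the dual graph of the region obtained from ${}_*^*\mathcal{E}_{x,y,z}(a)$ by adding two layers to the left of the semi-triangular hole, with the vertices $u,v,w,s$ chosen as in Figure \ref{Onesixkuo2}. The forced-lozenge analysis of Figure \ref{Onesixkuo2b} transfers verbatim, except that some forced lozenges now carry weight $\tfrac12$, so each of the six generating functions $\M(G-\{v\}),\dots,\M(G-\{u,v,s\})$ acquires an explicit power of $\tfrac12$, just as in equations (\ref{Rthmeq1b})--(\ref{Rthmeq6b}) for ${}_*^*\mathcal{R}$. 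These prefactors balance in the Kuo identity (\ref{Kuoeq}), so the weighted regions obey the same recurrence as (\ref{Ethmeq7}):
\[
\M({}_*^*\mathcal{E}_{x,y,z}(a))\M({}_*^*\mathcal{E}_{x+3,y,z-1}(a-1))=\M({}_*^*\mathcal{E}_{x+3,y,z}(a-1))\M({}_*^*\mathcal{E}_{x,y,z-1}(a))+\M({}_*^*\mathcal{E}_{x,y+2,z-1}(a-1))\M({}_*^*\mathcal{E}_{x+3,y-2,z}(a)).
\]
Every region other than ${}_*^*\mathcal{E}_{x,y,z}(a)$ here has a strictly smaller value of $y+z+2a$, so by the induction hypothesis its generating function equals the corresponding instance of $E_2$, and it remains only to check that $E_2(x,y,z,a)$ as given in (\ref{E2aform})--(\ref{E2bform}) satisfies this recurrence; this verification I would defer to Section \ref{verification}.

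The hard part will be that final algebraic verification: because the closed form (\ref{E2bform}) involves the floor functions $\lfloor a/2\rfloor$, $\lfloor (a+1)/2\rfloor$, $\lfloor y/2\rfloor$, and $\lfloor (y-1)/2\rfloor$, checking the recurrence will almost certainly require splitting into parity cases for $y$ (and possibly $a$), after which each case collapses to an identity among products of Pochhammer and skipped-Pochhammer symbols that can be simplified factor by factor. A secondary, lower-stakes obstacle is the bookkeeping of the weight prefactors in both the base cases and the inductive step; one must confirm that they cancel so that the recurrence is genuinely of the same form as the unweighted one, precisely as happens for ${}_*^*\mathcal{R}$ in Theorem \ref{R2thm}.
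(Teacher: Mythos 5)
Your proposal is correct and follows essentially the same route as the paper: induction on $y+z+2a$, base cases $y=0,1$, $z=0$, $a=0$ reduced to ${}_*^*\mathcal{G}$-type regions via Lemmas \ref{RS} and \ref{Glem}, and an inductive step via Kuo condensation yielding the recurrence (\ref{E2thmeq7}), with the paper likewise noting that the weight factors from forced lozenges cancel when substituted into (\ref{Kuoeq}). The only cosmetic differences are that the paper states the base-case factorizations as exact equalities (no extra power of $2$ appears there) and omits the final algebraic verification as analogous to that for $E_1$, whereas you defer it to Section \ref{verification}.
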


\begin{thm}\label{E3thm}
Assume that $x,y,z,a$  are non-negative integers.  Then
\begin{equation}\label{Ebeqex}
\M({}_*\mathcal{E}_{x,y,z}(a))= E_3(x,y,z,a),
\end{equation}
where $E_3(x,y,z,a)$ is defined explicitly in (\ref{E3aform})--(\ref{E3cform}).
\end{thm}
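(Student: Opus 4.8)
The plan is to prove Theorem \ref{E3thm} by induction on $y+z+2a$, running parallel to the proof of Theorem \ref{E1thm}. One first observes that the three-fold case split in the definition of $E_3$ --- the value at $y=0$ in (\ref{E3aform}) and the separate values at $y=2k$ and $y=2k+1$ in (\ref{E3bform})--(\ref{E3cform}) --- is compatible with such an induction: the recurrence produced below relates ${}_*\mathcal{E}$-regions whose $y$-parameters are $y$, $y+2$ and $y-2$, so the parity of $y$ is preserved at each step and only one of the two positive-$y$ formulas is ever invoked.

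For the base cases $a=0$, $z=0$, $y=0$ and $y=1$ one argues as in the unweighted case treated in Theorem \ref{E1thm}. When $a=0$, the region ${}_*\mathcal{E}_{x,y,z}(a)$ is already a ${}_*\mathcal{G}$-type region; when $z=0$, deleting the forced lozenges along the southeastern side turns it into one; in both situations the second formula in Lemma \ref{Glem} applies directly. When $y=0$ or $y=1$, Lemma \ref{RS} splits the region along the base of the semi-triangular hole into two ${}_*\mathcal{G}$-type pieces (compare Figures \ref{Onesixbase2}(a),(b)), and the product of their TGFs, evaluated by Lemma \ref{Glem}, must be checked to agree with $E_3$; here one has to account for the $\tfrac{1}{2}$-weighted lozenges that become forced when the two pieces are detached, which is the source of the leading power of $2$ in (\ref{E3aform})--(\ref{E3cform}).

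For the induction step, assume $y>1$ and $z,a>0$, and apply Kuo condensation (Lemma \ref{Kuothm}) to the dual graph $G$ of the region obtained from ${}_*\mathcal{E}_{x,y,z}(a)$ by adjoining the same two layers of unit triangles to the left of the semi-triangular hole used in the proof of Theorem \ref{E1thm} (see Figure \ref{Onesixkuo2}), with the vertices $u,v,w,s$ chosen exactly as there. Removing the (now possibly weighted) forced lozenges created by deleting each of $v$, $\{u,s,w\}$, $u$, $\{v,w,s\}$, $w$ and $\{u,v,s\}$ produces weighted copies of ${}_*\mathcal{E}_{x,y,z}(a)$, ${}_*\mathcal{E}_{x+3,y,z-1}(a-1)$, ${}_*\mathcal{E}_{x+3,y,z}(a-1)$, ${}_*\mathcal{E}_{x,y,z-1}(a)$, ${}_*\mathcal{E}_{x,y+2,z-1}(a-1)$ and ${}_*\mathcal{E}_{x+3,y-2,z}(a)$, respectively, exactly as in the analogue of Figures \ref{Onesixkuo2b}(a)--(f). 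Recording the $\tfrac{1}{2}$-factors forced in each of the six cases and substituting into (\ref{Kuoeq}) gives a three-term recurrence for the TGFs of the ${}_*\mathcal{E}$-regions, of the same shape as (\ref{Ethmeq7}) up to explicit powers of $2$; by the induction hypothesis every term in it other than $\M({}_*\mathcal{E}_{x,y,z}(a))$ equals the corresponding value of $E_3$.

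It then remains to verify that the product $E_3(x,y,z,a)$ defined in (\ref{E3aform})--(\ref{E3cform}) satisfies this recurrence, and this is deferred to Section \ref{verification}. The hard part will be precisely this verification: unlike $P_1,P_2,E_1,E_2$, the product $E_3$ is full of floor and ceiling functions and is genuinely distinct in the two residue classes of $y$, so it must be carried out separately for $y=2k$ and $y=2k+1$, and the parity-dependent bookkeeping of the forced weighted lozenges in the Kuo step has to be done carefully so that the resulting powers of $2$ match those built into (\ref{E3bform}) and (\ref{E3cform}).
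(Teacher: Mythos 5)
Your proposal is correct and follows essentially the same route as the paper: induction on $y+z+2a$ with the same base cases reduced to ${}_*\mathcal{G}$-type regions via Lemma \ref{RS} and Lemma \ref{Glem}, the same application of Kuo condensation with the vertices of Figure \ref{Onesixkuo2} yielding the recurrence (\ref{E3thmeq7}), and the algebraic verification for $E_3$ deferred to Section \ref{verification}. Your observations that the recurrence preserves the parity of $y$ (so the $y=2k$ and $y=2k+1$ formulas never mix) and that the weights of forced lozenges must be tracked are accurate refinements of what the paper states more tersely.
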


\begin{thm}\label{E4thm}
For non-negative integers  $x,y,z,a$
\begin{equation}\label{Ebeqex}
\M({}^*\mathcal{E}_{x,y,z}(a))=E_4(x,y,z,a),
\end{equation}
where $E_4(x,y,z,a)$ is the product defined in (\ref{E4aform})--(\ref{E4dform}).
\end{thm}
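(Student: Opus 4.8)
The plan is to prove Theorem \ref{E4thm} by induction on $y+z+2a$ combined with Kuo condensation (Lemma \ref{Kuothm}), in exactly the spirit of the proof of Theorem \ref{E1thm}, and to push the resulting algebraic identity to Section \ref{verification}. First I would dispose of the base cases $a=0$, $y=0$, $y=1$, and $z=0$. When $a=0$ the region ${}^*\mathcal{E}_{x,y,z}(0)$ carries no hole and is itself a ${}^*\mathcal{G}$-type region, so Lemma \ref{Glem} applies. For $y=0$ or $y=1$ I would use Lemma \ref{RS} to cut ${}^*\mathcal{E}_{x,y,z}(a)$ along the base of the semi-triangular hole; after deleting the forced lozenges — some of which lie on the northeastern boundary and therefore carry weight $1/2$ — each of the two resulting pieces is a ${}^*\mathcal{G}$-type region, and Lemma \ref{Glem} together with a short computation produces $E_4(x,0,z,a)$ as in (\ref{E4aform}) (respectively $E_4(x,1,z,a)$ from (\ref{E4bform})--(\ref{E4dform}) with $k=0$). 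For $z=0$, forced lozenges along the southeastern side collapse the region to a single ${}^*\mathcal{G}$-type region, again covered by Lemma \ref{Glem}.

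For the induction step (with $y>1$ and $z,a>0$) I would apply Lemma \ref{Kuothm} to the dual graph $G$ of the region obtained from ${}^*\mathcal{E}_{x,y,z}(a)$ by adjoining two layers of unit triangles to the left of the hole, choosing the four vertices $u,v,w,s$ as in Figure \ref{Onesixkuo2}. Tracking the lozenges forced by the removal of each of $u,v,w,s$ — now also keeping account of the weight-$1/2$ lozenges near the northeastern side — gives six identities analogous to (\ref{Ethmeq1})--(\ref{Ethmeq6}), and hence a recurrence of the same shape as (\ref{Ethmeq7}),
\begin{align*}
\M({}^*\mathcal{E}_{x,y,z}(a))\,\M({}^*\mathcal{E}_{x+3,y,z-1}(a-1)) ={}& \M({}^*\mathcal{E}_{x+3,y,z}(a-1))\,\M({}^*\mathcal{E}_{x,y,z-1}(a))\\
&+ \M({}^*\mathcal{E}_{x,y+2,z-1}(a-1))\,\M({}^*\mathcal{E}_{x+3,y-2,z}(a)),
\end{align*}
possibly up to a common power of $2$ multiplying the three terms. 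Since the $y$-parameters occurring here are $y$, $y+2$, and $y-2$, the recurrence stays within a single parity class of $y$. Every region other than ${}^*\mathcal{E}_{x,y,z}(a)$ has strictly smaller value of $y+z+2a$, so by the induction hypothesis its TGF equals the corresponding value of $E_4$; it then remains only to check that $E_4=E_3/K$, with $E_3$ and $K$ as in (\ref{E3bform})--(\ref{E3cform}) and (\ref{E4cform})--(\ref{E4dform}), satisfies the recurrence.

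The hard part will be that verification, which I expect to be heavier than for $E_1$, $E_2$, $E_3$ precisely because $E_4$ is defined as a quotient: one must feed the already-established recurrence for $E_3$ (from the proof of Theorem \ref{E3thm}) through the division by $K$, so the check becomes a rational-function identity in $x,y,z,a$ rather than a polynomial one. I also anticipate that the parity splits in the definitions — the $2k$ versus $2k+1$ cases of $E_3$ and of $K$, with their floor and ceiling functions — and the small-parameter edge cases such as $z=1$ (where $z-1=0$ drops a base-case region into the recurrence) will each require separate, though routine, bookkeeping, since both $E_3$ and $K$ are written piecewise in these regimes.
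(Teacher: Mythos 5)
Your proposal follows the paper's proof exactly: the paper establishes Theorem \ref{E4thm} by the same induction on $y+z+2a$ with the same base cases reduced to ${}^*\mathcal{G}$-type regions, the same Kuo condensation recurrence as for the other $\mathcal{E}$-type regions, and a verification (omitted there, as analogous to the $E_3$ case) that $E_4=E_3/K$ satisfies it. No substantive differences.
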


\begin{proof}[Proof of Theorem \ref{E2thm}]
We prove our theorem by induction on $y+z+2a$ as in the proof of Theorem \ref{E1thm}. The only difference is the weights of the boundary lozenges on the western and the northeastern sides.

The base cases are still the same as in the proof of Theorem \ref{E1thm}.

First for the case $y=0$, by Lemma \ref{RS}, we have
\begin{equation}
\M({}^*_*\mathcal{E}_{x,0,z}(a))=\M({}^*_*\mathcal{G}_{a-1,x})\M({}^*_*\mathcal{G}_{z-1,x+3a}),
\end{equation}
and (\ref{Ebeqex}) follows from Lemma \ref{Glem}. When $y=1$, we have from Lemma \ref{RS}
\begin{equation}
\M({}^*_*\mathcal{E}_{x,1,z}(a))=\M({}^*_*\mathcal{G}_{a,x})\M({}^*_*\mathcal{G}_{z,x+3a}),
\end{equation}
and the theorem is implied by  Lemma \ref{Glem}.

The case when $a=0$ follows directly from Lemma \ref{Glem}. If $z=0$, we remove forced lozenges along the southeastern side of the region to get the region ${}^*_*\mathcal{G}_{y+a-1,x}$, and the theorem follows again from Lemma \ref{Glem}.

For the induction step, we apply Lemma \ref{Kuothm} in the same way as in the proof of Theorem \ref{E1thm}, and we get the same recurrence for the TGFs of ${}_*^*\mathcal{E}$-regions
\begin{align}\label{E2thmeq7}
\M({}_*^*\mathcal{E}_{x,y,z}(a))\M({}_*^*\mathcal{E}_{x+3,y,z-1}(a-1))=&\M({}_*^*\mathcal{E}_{x+3,y,z}(a-1))\M({}_*^*\mathcal{E}_{x,y,z-1}(a))\notag\\
&+\M({}_*^*\mathcal{E}_{x,y+2,z-1}(a-1))\M({}_*^*\mathcal{E}_{x+3,y-2,z}(a)).
\end{align}
Strictly speaking we still get the equalities (\ref{Ethmeq1})--(\ref{Ethmeq6}), up to the product of the weights of forced lozenges. However, when plugging them into the equation (\ref{Kuoeq}) in Lemma \ref{Kuothm}, all the weights cancel out. The verification procedure is similar to that of Theorem \ref{E1thm} and is omitted.
\end{proof}

\begin{proof}[Proof of Theorem \ref{E3thm}]
The proof follows a similar pattern to the proofs of Theorems \ref{E1thm} and \ref{E2thm}.
The base cases are the same, and in the induction step, we get the same recurrence  (\ref{Ethmeq7}) for TGFs of ${}_*\mathcal{E}$-type regions
\begin{align}\label{E3thmeq7}
\M({}_*\mathcal{E}_{x,y,z}(a))\M({}_*\mathcal{E}_{x+3,y,z-1}(a-1))=&\M({}_*\mathcal{E}_{x+3,y,z}(a-1))\M({}_*\mathcal{E}_{x,y,z-1}(a))\notag\\
&+\M({}_*\mathcal{E}_{x,y+2,z-1}(a-1))\M({}_*\mathcal{E}_{x+3,y-2,z}(a)).
\end{align}
In Section \ref{verification},
we check that $E_3(x,y,z,a)$ satisfies this recurrence.
\end{proof}

\begin{proof}[Proof of Theorem \ref{E4thm}]
All the details of this proof are the same as those of Theorems \ref{E1thm}--\ref{E3thm}. Showing that $E_4(x,y,z,a)$ satisfies the recurrence is similar to showing that $E_3(x,y,z,a)$ does and is omitted.
\end{proof}

\begin{figure}
  \centering
  \includegraphics[width=10cm]{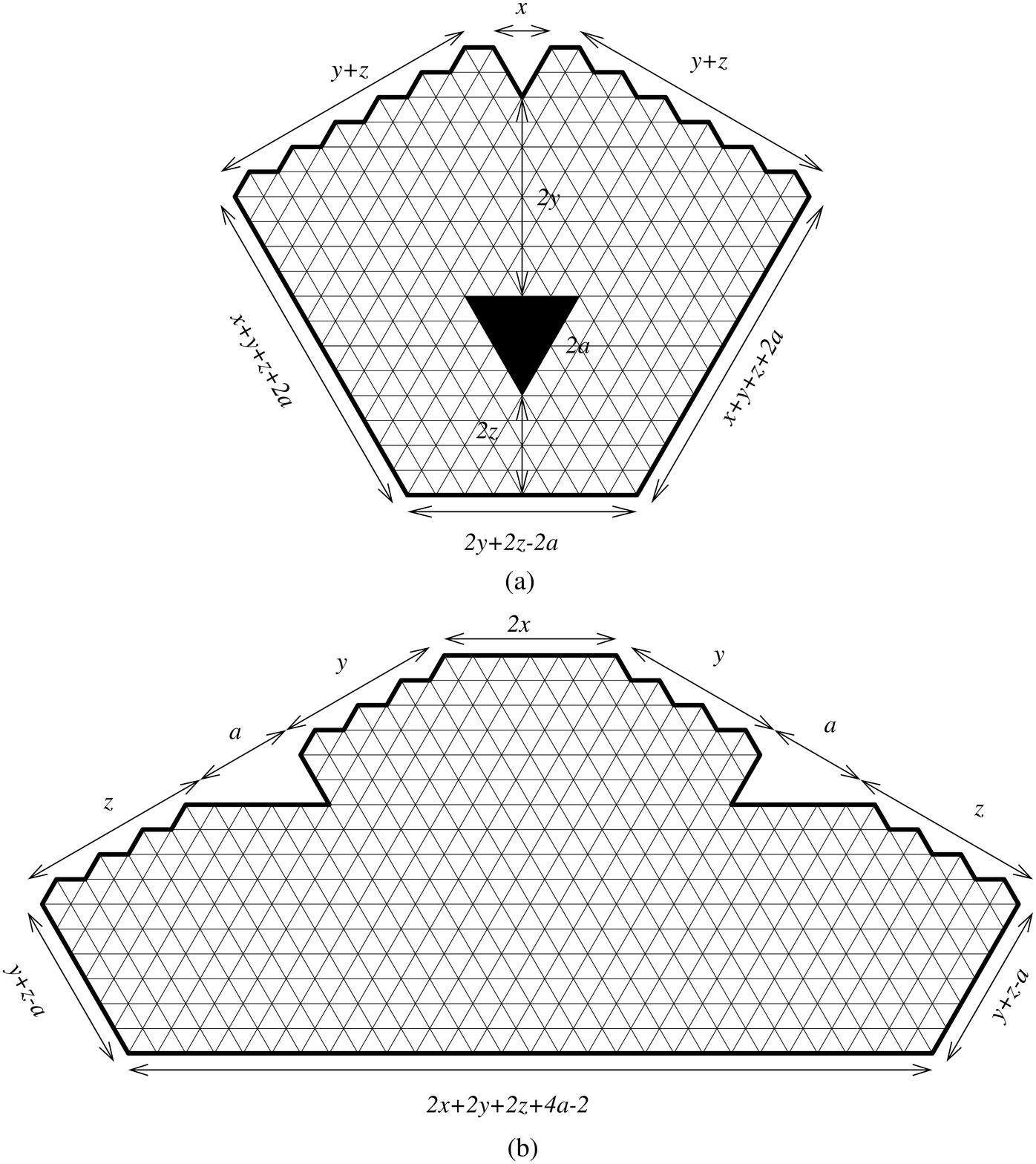}
  \caption{Two thirds of the defected hexagon $\overline{\mathcal{H}}_{t,y}(a,x)$: (a) $\mathcal{C}_{x,y,z}(a)$ and (b) $\mathcal{D}_{x,y,z}(a)$.}\label{Onesixcoro}
\end{figure}

Next, we introduce two regions that are roughly one-third of the defected hexagon $\overline{\mathcal{H}}_{t,y}(a,x)$ (see Figure \ref{Onesixcoro}). Denote by $\mathcal{C}_{x,y,z}(a)$  the region in Figure \ref{Onesixcoro}(a). We are also interested in its weighted version when the lozenges along its northeastern and northwestern sides have weight $1/2$. Denote this weighted version by $\overline{\mathcal{C}}_{x,y,z}(a)$. Next, we denote the region in Figure \ref{Onesixcoro}(b) by $\mathcal{D}_{x,y,z}(a)$, and denote by  $\overline{\mathcal{D}}_{x,y,z}(a)$ its weighted version when the northwestern and northeastern boundary lozenges have weight $1/2$.

\begin{cor}\label{Ecoro}
For non-negative integers $x,y,z,a$
\begin{equation}
\M(\mathcal{C}_{x,y,z}(a))=2^{y+z}E_1(x,y,z,a))E_4(x+1,y,z,a)),
\end{equation}
\begin{equation}
\M(\overline{\mathcal{C}}_{x,y,z}(a))=2^{y+z}E_3(x,y,z,a))E_2(x+1,y,z,a)),
\end{equation}
\begin{equation}
\M(\mathcal{D}_{x,y,z}(a))=2^{y+z}E_1(x,y-1,z,a)E_3(x,y,z,a)),
\end{equation}
and
\begin{equation}
\M(\overline{\mathcal{D}}_{x,y,z}(a))=2^{y+z}E_3(x,y-1,z,a)E_2(x,y,z,a).
\end{equation}
\end{cor}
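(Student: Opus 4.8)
The plan is to derive all four identities uniformly from Ciucu's factorization theorem (Lemma~\ref{ciucuthm}). Each of the four regions $\mathcal{C}_{x,y,z}(a)$, $\overline{\mathcal{C}}_{x,y,z}(a)$, $\mathcal{D}_{x,y,z}(a)$, $\overline{\mathcal{D}}_{x,y,z}(a)$ of Figure~\ref{Onesixcoro} carries a vertical symmetry axis $\ell$ --- namely the reflection axis that the $120^{\circ}$-rotational symmetry of $\overline{\mathcal{H}}_{t,y}(a,x)$ induces on the ``two-thirds'' subregion. First I would verify the hypotheses of Lemma~\ref{ciucuthm}: that the unit triangles meeting $\ell$ form a cut set of the dual graph, count them (there are $2(y+z)$ of them, so $k=y+z$), and fix the coloring of the two vertex classes; the one to use is the coloring for which the edge-deletion rule of Lemma~\ref{ciucuthm} detaches every on-axis vertex, together with the weight-$\tfrac12$ lozenges lying along $\ell$, from the left half and attaches it to the right half. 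With that choice Ciucu's theorem produces $\M(G)=2^{y+z}\,\M(G^{+})\,\M(G^{-})$, which already accounts for the prefactor $2^{y+z}$ in the statement.

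The substance is then the geometric identification of $G^{+}$ and $G^{-}$. Tracking the boundary of $\mathcal{C}_{x,y,z}(a)$ around the fold, one finds (up to reflecting the right half into standard position) that $G^{+}$ is the dual graph of $\mathcal{E}_{x,y,z}(a)$ while $G^{-}$ is the dual graph of ${}^{*}\mathcal{E}_{x+1,y,z}(a)$: the extra ``$+1$'' in the first parameter comes from $G^{-}$ retaining the central column along $\ell$, and the weight-$\tfrac12$ northeastern boundary lozenges of ${}^{*}\mathcal{E}$ are precisely the images of the halved on-axis lozenges. Theorems~\ref{E1thm} and~\ref{E4thm} then give
\[
\M(\mathcal{C}_{x,y,z}(a))=2^{y+z}\,\M(\mathcal{E}_{x,y,z}(a))\,\M({}^{*}\mathcal{E}_{x+1,y,z}(a))=2^{y+z}E_{1}(x,y,z,a)\,E_{4}(x+1,y,z,a).
\]
The other three cases run identically, only the bookkeeping changing. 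For $\overline{\mathcal{C}}_{x,y,z}(a)$ the pre-existing weight-$\tfrac12$ lozenges on the northwestern and northeastern sides become the weighted western sides of $G^{+}$ and $G^{-}$ respectively, so $G^{+}$ is ${}_{*}\mathcal{E}_{x,y,z}(a)$ and $G^{-}$ is ${}_{*}^{*}\mathcal{E}_{x+1,y,z}(a)$, giving $E_{3}(x,y,z,a)E_{2}(x+1,y,z,a)$ via Theorems~\ref{E2thm} and~\ref{E3thm}. For $\mathcal{D}_{x,y,z}(a)$ and $\overline{\mathcal{D}}_{x,y,z}(a)$ the axis $\ell$ sits differently with respect to the semi-triangular hole, so there is no ``$x\mapsto x+1$'' shift but the left half loses one unit of the $y$-parameter; this yields $G^{+}=\mathcal{E}_{x,y-1,z}(a)$, $G^{-}={}_{*}\mathcal{E}_{x,y,z}(a)$ in the unweighted case and $G^{+}={}_{*}\mathcal{E}_{x,y-1,z}(a)$, $G^{-}={}_{*}^{*}\mathcal{E}_{x,y,z}(a)$ in the weighted case, hence $E_{1}(x,y-1,z,a)E_{3}(x,y,z,a)$ and $E_{3}(x,y-1,z,a)E_{2}(x,y,z,a)$ respectively.

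The routine part is the final invocation of Theorems~\ref{E1thm}--\ref{E4thm} once the two halves are named. The genuinely delicate step --- the one I would accompany with a carefully labeled figure --- is exactly this identification of $G^{+}$ and $G^{-}$: checking which boundary side of each region maps to which side of each half, pinning down the parameter shifts ($x\mapsto x+1$ in the $\mathcal{C}$-cases, $y\mapsto y-1$ in the $\mathcal{D}$-cases), confirming that the $\ast$-weighted boundary lozenges of the resulting ${}_{*}\mathcal{E}$-, ${}^{*}\mathcal{E}$-, ${}_{*}^{*}\mathcal{E}$-regions are precisely the images of the halved on-axis lozenges together with the pre-existing weighted lozenges of $\overline{\mathcal{C}}$ or $\overline{\mathcal{D}}$, and verifying that no stray weight-$\tfrac12$ forced lozenges are lost in recognizing $G^{\pm}$ as an $\mathcal{E}$-type region (so that the net power of $2$ really is $y+z$). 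I would also note that selecting the opposite coloring in Lemma~\ref{ciucuthm} merely interchanges $G^{+}$ and $G^{-}$ and leaves the product unchanged, so the choice above loses no generality.
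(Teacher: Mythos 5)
Your proposal is correct and follows essentially the same route as the paper: apply Ciucu's Factorization Theorem to the dual graph of each region, identify the two halves (after removing forced lozenges) as the appropriate weighted or unweighted $\mathcal{E}$-type regions with the parameter shifts you describe, and invoke Theorems \ref{E1thm}--\ref{E4thm}. Your identifications of $G^{+}$ and $G^{-}$ in all four cases agree with the paper's (and you even cite the correct theorems for the $\mathcal{C}$-case, where the paper's text contains a slip, referring to Theorem \ref{E3thm} where Theorem \ref{E4thm} is meant).
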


\begin{figure}
  \centering
  \includegraphics[width=14cm]{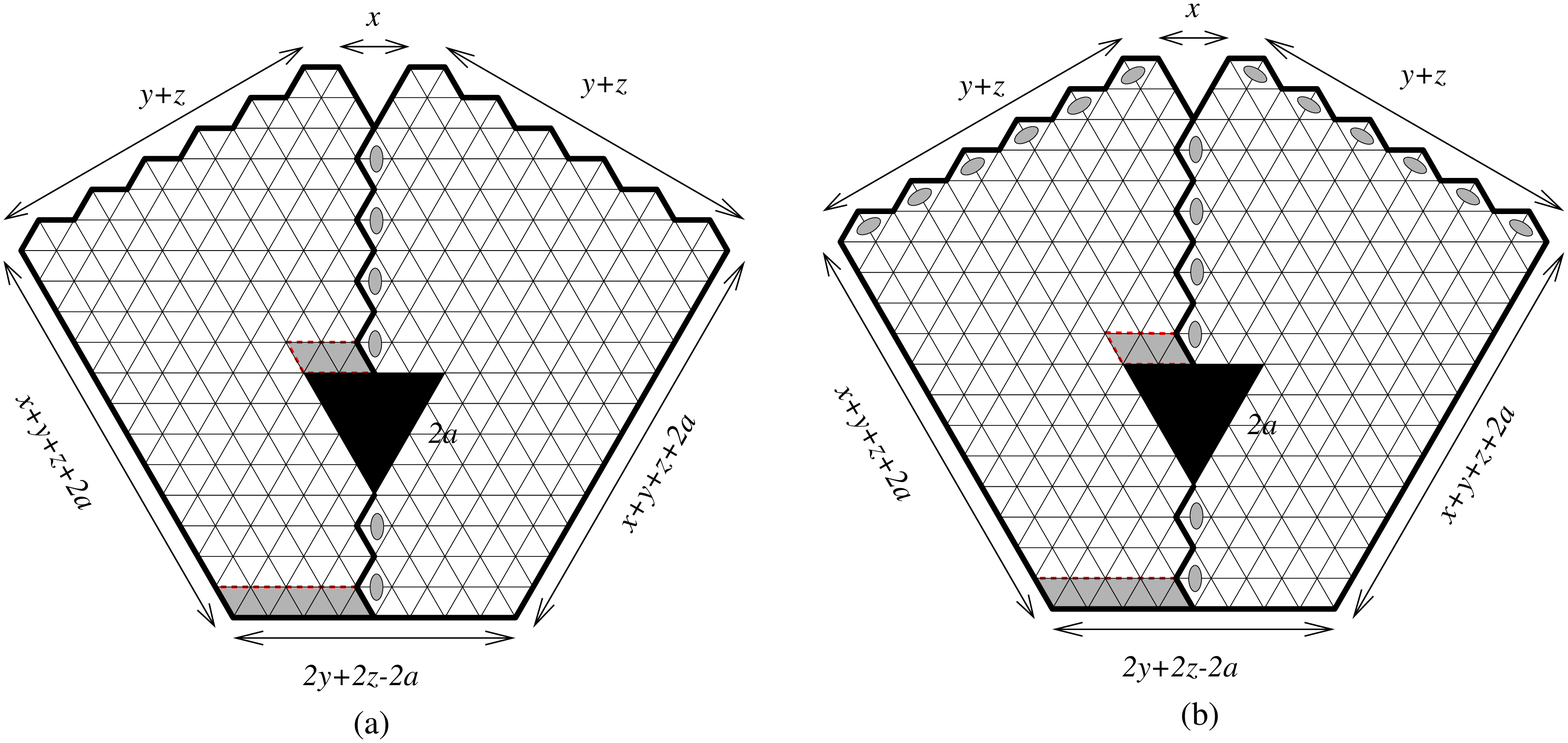}
  \caption{An illustration of the proof of Corollary \ref{Ecoro} for the $\mathcal{C}$- and $\overline{\mathcal{C}}$-type regions.}\label{Onesixcoro2}
\end{figure}
\begin{figure}
  \centering
  \includegraphics[width=10cm]{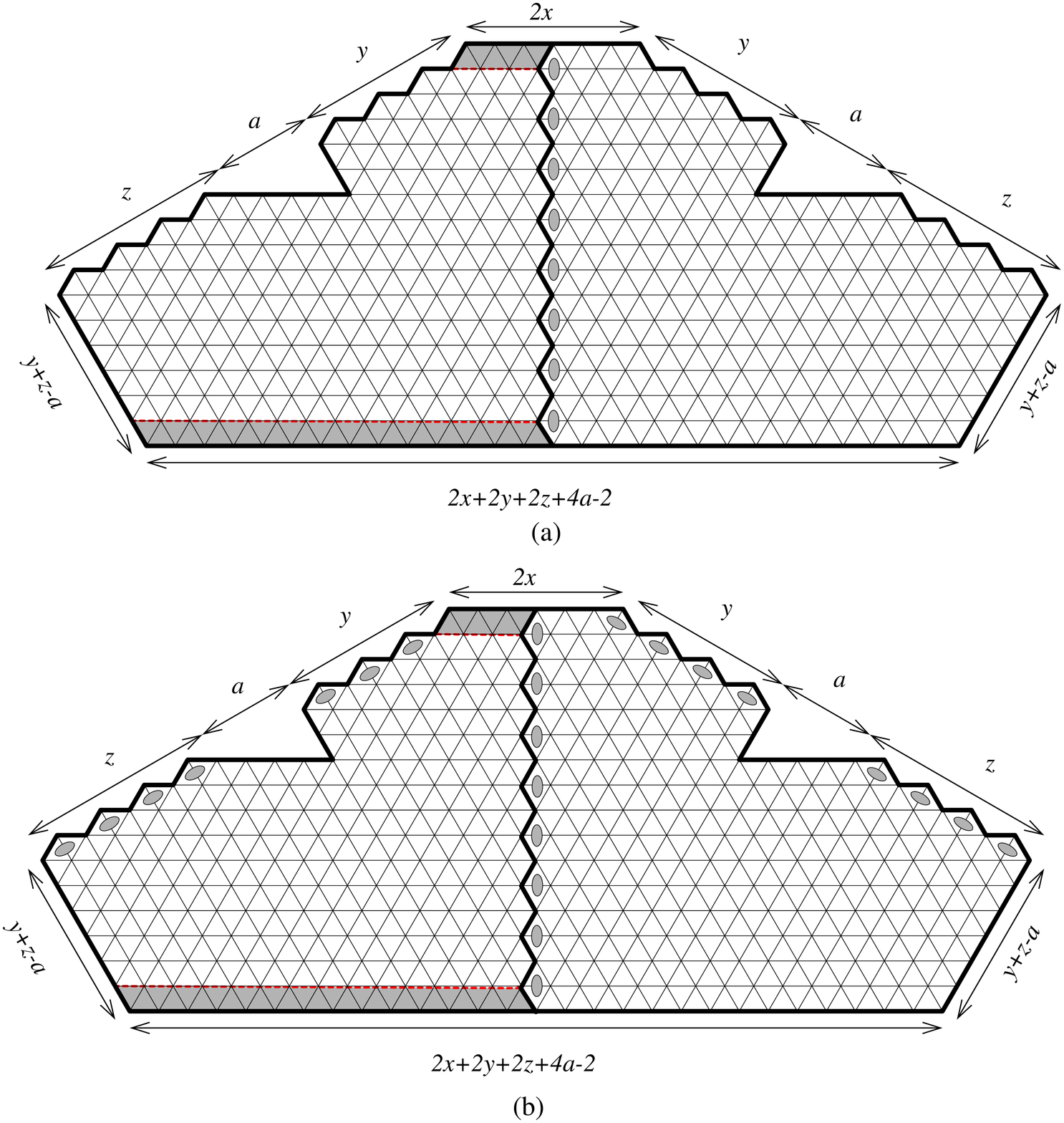}
  \caption{An illustration of the proof of Corollary \ref{Ecoro} for the $\mathcal{D}$- and $\overline{\mathcal{D}}$-type regions.}\label{Onesixcoro3}
\end{figure}

\begin{proof}
We apply Ciucu's Factorization Theorem, Lemma \ref{ciucuthm}, to the dual graph $G$ of the region $\mathcal{C}_{x,y,z}(a)$. This graph is splitted into two disjoint graphs $G^+$ and $G^-$ as described in the statement of Lemma \ref{ciucuthm}. The graph $G^+$ corresponds to the left subregion in Figure \ref{Onesixcoro2}(a), and the graph $G^-$ corresponds to the right subregion. The right subregion is exactly the region ${}^*\mathcal{E}_{x+1,y,z}(a)$, and the subregion on the left becomes the region $\mathcal{E}_{x,y,z}(a)$ after removing several forced lozenges along the shaded bands in Figure \ref{Onesixcoro2}(a). Therefore
\begin{align}
\M(\mathcal{C}_{x,y,z}(a))=\M(G)&=2^{y+z}\M(G^+)\M(G^-)\notag\\
&=2^{y+z}\M(\mathcal{E}_{x,y,z}(a))\M({}^*\mathcal{E}_{x+1,y,z}(a)),
\end{align}
and the first equality follows from Theorems \ref{E1thm} and \ref{E3thm}.

The remaining parts can be treated in an analogous manner, based on Figures \ref{Onesixcoro2} and  \ref{Onesixcoro3}.
\end{proof}

\section{Proofs of the main results}\label{proofs}

\begin{figure}\centering
\setlength{\unitlength}{3947sp}%
\begingroup\makeatletter\ifx\SetFigFont\undefined%
\gdef\SetFigFont#1#2#3#4#5{%
  \reset@font\fontsize{#1}{#2pt}%
  \fontfamily{#3}\fontseries{#4}\fontshape{#5}%
  \selectfont}%
\fi\endgroup%
\resizebox{!}{7cm}{
\begin{picture}(0,0)%
\includegraphics{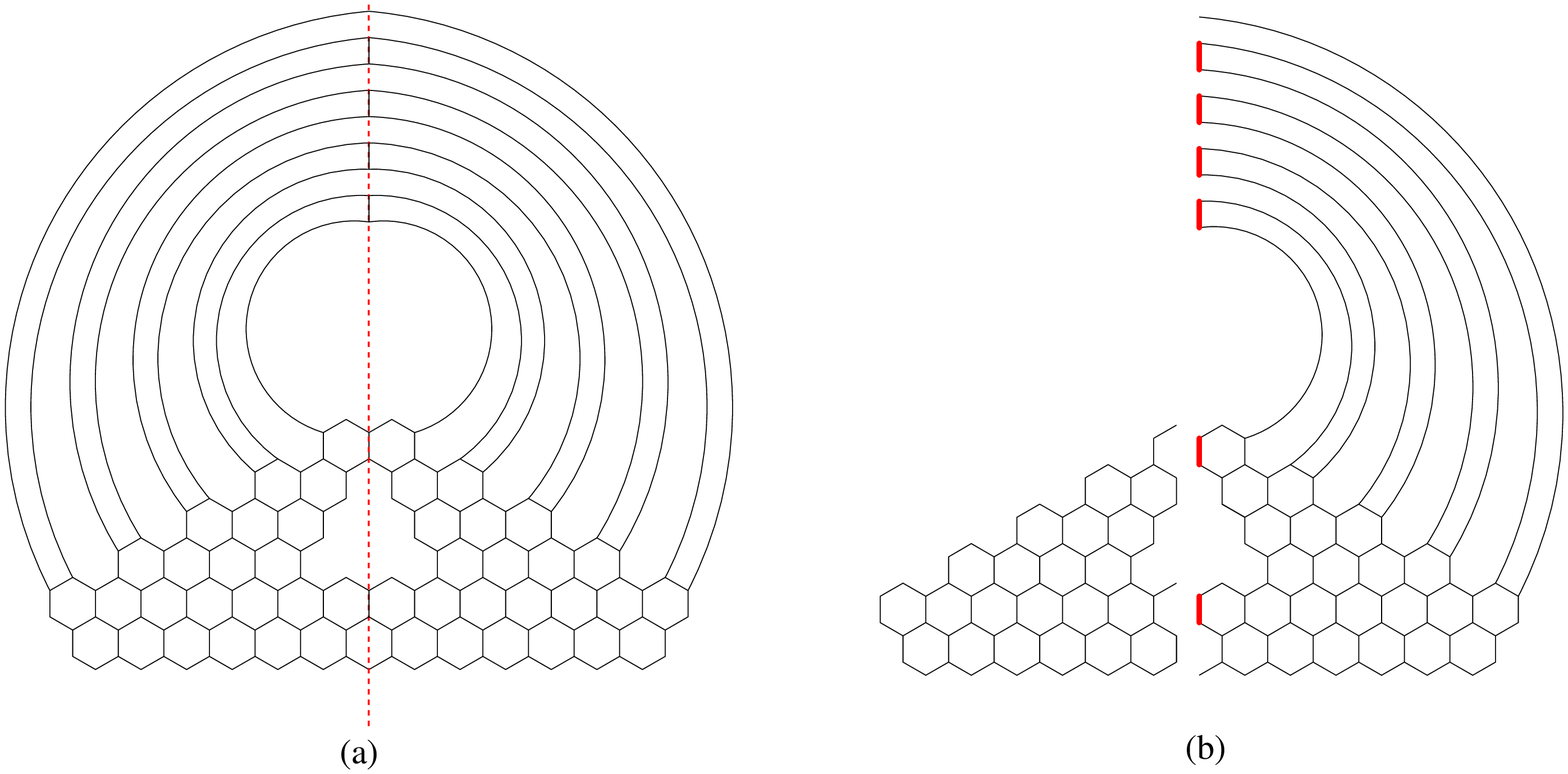}%
\end{picture}
\begin{picture}(13679,6626)(865,-6147)
%  METADATA <id>90</id>
\put(4221,-5661){\makebox(0,0)[lb]{\smash{{\SetFigFont{14}{16.8}{\rmdefault}{\mddefault}{\itdefault}{$\ell$}%
}}}}
%  METADATA <id>294</id>
\put(9351,-3281){\makebox(0,0)[lb]{\smash{{\SetFigFont{14}{16.8}{\rmdefault}{\mddefault}{\itdefault}{$\Orb(G)^+$}%
}}}}
%  METADATA <id>295</id>
\put(14191,-2071){\makebox(0,0)[lb]{\smash{{\SetFigFont{14}{16.8}{\rmdefault}{\mddefault}{\itdefault}{$\Orb(G)^-$}%
}}}}
\end{picture}}
\caption{ A symmetric embedding of $\Orb(G)$ in the plane and the subsequent application of Ciucu's Factorization Theorem.}\label{Onethird2}
\end{figure}

\begin{figure}\centering
\setlength{\unitlength}{3947sp}%
\begingroup\makeatletter\ifx\SetFigFont\undefined%
\gdef\SetFigFont#1#2#3#4#5{%
  \reset@font\fontsize{#1}{#2pt}%
  \fontfamily{#3}\fontseries{#4}\fontshape{#5}%
  \selectfont}%
\fi\endgroup%
\resizebox{!}{7cm}{
\begin{picture}(0,0)%
\includegraphics{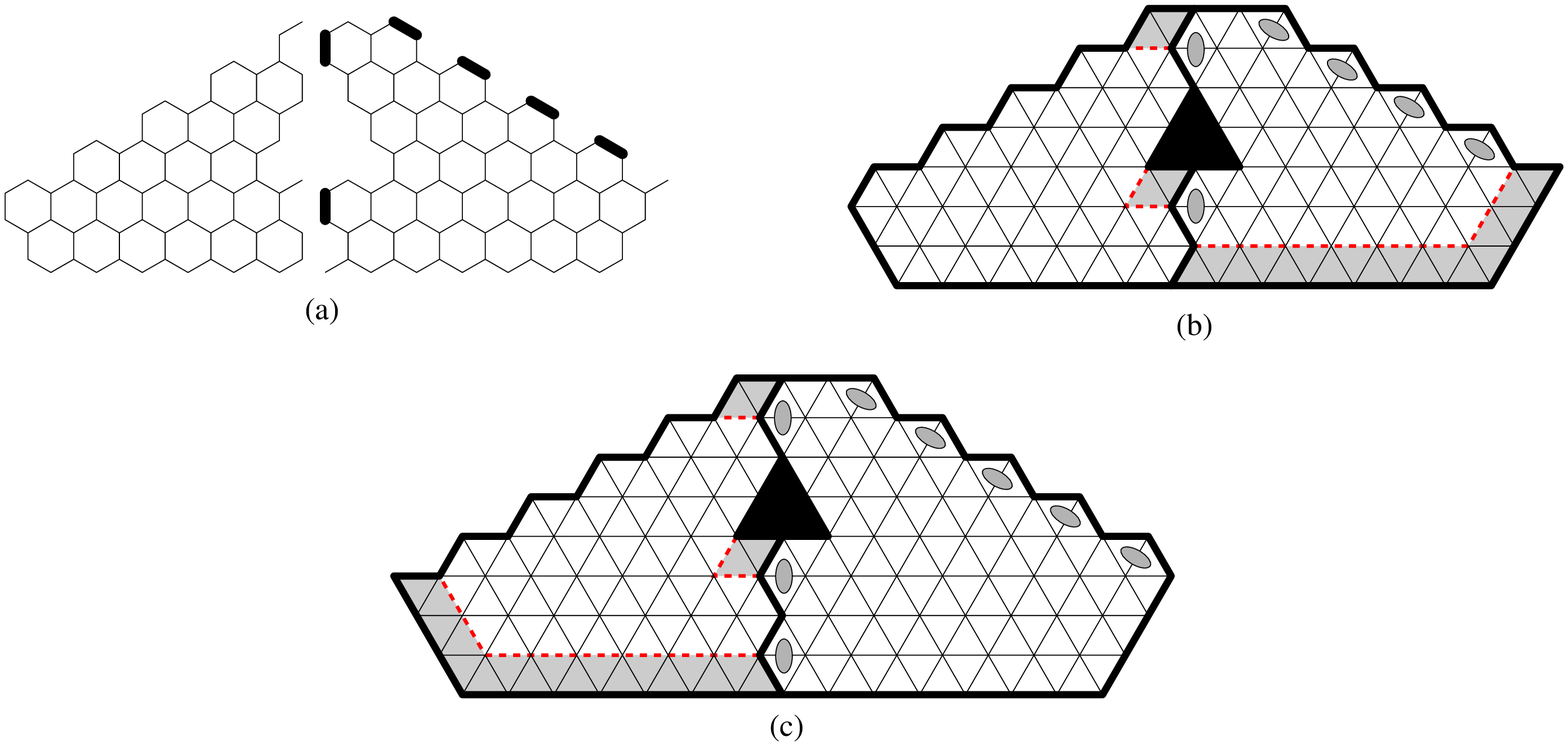}%
\end{picture}%
%
%  Created by WinFIG version 6.2
%  METADATA <version>1.0</version>
%

\begin{picture}(13316,6540)(888,-6849)
%  METADATA <id>294</id>
\put(9391,-601){\makebox(0,0)[lb]{\smash{{\SetFigFont{14}{16.8}{\familydefault}{\mddefault}{\updefault}{$\mathcal{R}^+$}%
}}}}
%  METADATA <id>295</id>
\put(12886,-706){\makebox(0,0)[lb]{\smash{{\SetFigFont{14}{16.8}{\familydefault}{\mddefault}{\updefault}{$\mathcal{R}^-$}%
}}}}
%  METADATA <id>251</id>
\put(5131,-976){\makebox(0,0)[lb]{\smash{{\SetFigFont{14}{16.8}{\rmdefault}{\mddefault}{\itdefault}{$\Orb(G)^-$}%
}}}}
%  METADATA <id>250</id>
\put(1981,-961){\makebox(0,0)[lb]{\smash{{\SetFigFont{14}{16.8}{\rmdefault}{\mddefault}{\itdefault}{$\Orb(G)^+$}%
}}}}
\end{picture}}
\caption{ $\Orb(G)^+$ and $\Orb(G)^-$ are the dual graphs of certain (possibly weighted) $\mathcal{R}$-type regions.}\label{onethird3}
\end{figure}

\begin{figure}
  \centering
  \includegraphics[width=12cm]{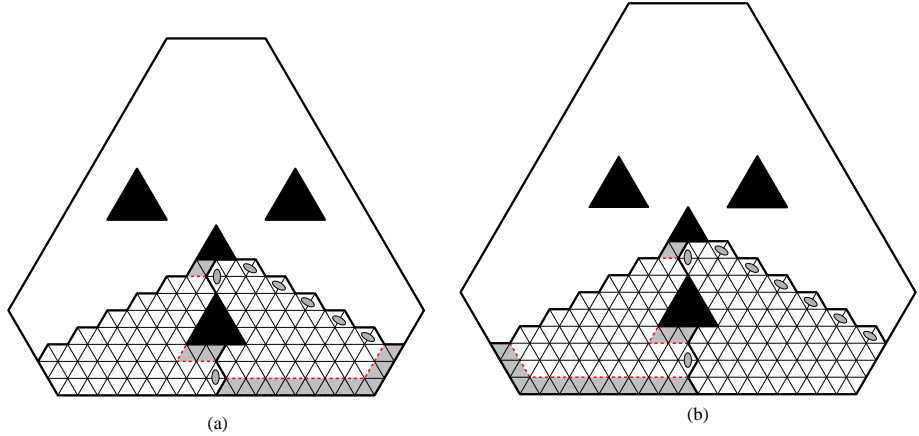}
  \caption{$\Orb(\overline{G})^+$ and $\Orb(\overline{G})^-$ are the dual graphs of certain (possibly weighted) $\mathcal{F}$-type regions.}\label{Onethirdmix}
\end{figure}

\begin{proof}[Proof of Theorem \ref{main1}]
We first prove the tiling formula (\ref{main1eq1}).

As the lozenge tilings of the defected hexagon $\mathcal{H}_{2t+1,y}(2a,2x)$ can be naturally identified with the perfect matchings of its dual graph $G$, the number of cyclically symmetric tilings of $\mathcal{H}_{2t+1,y}(2a,2x)$ is the number of perfect matchings of $G$ invariant under $r$, the rotation by $120^{\circ}$.

Consider the action of the group generated by $r$ on $G$, and let $\Orb(G)$ be the orbit graph. The perfect matchings of $\Orb(G)$ can be identified with the $r$-invariant perfect matchings of $G$.

Figure \ref{Onethird2}(a) (for $t=2$, $a=1$, $b=1$, $y=1$) shows that the graph $\Orb(G)$ can be embedded in the plane so that it accepts a vertical symmetry axis $\ell$. This allows us to apply Ciucu's Factorization Theorem, Lemma \ref{ciucuthm}, to $\Orb(G)$ (see Figure \ref{Onethird2}(b)). We obtain
\begin{equation}\label{maineq1}
\CS(\mathcal{H}_{2t+1,y}(2a,2x))=\M(\Orb(G))=2^{2t+4a+1}\M(\Orb(G)^+)\M(\Orb(G)^-).
\end{equation}
The component graphs $\Orb(G)^+$ and $\Orb(G)^-$ can be re-drawn as in Figure \ref{onethird3}(a), where the bold edges have weight $1/2$. One readily sees that $\Orb(G)^+$ and $\Orb(G)^-$ are the dual graphs of the  regions $\mathcal{R}^+$ and $\mathcal{R}^-$ in Figure \ref{onethird3}(b). The region $\mathcal{R}^+$, after removing forced lozenges on the top, is exactly the region $\mathcal{R}_{x+1,y,t-y}(a)$ (see Figure \ref{onethird3}(b)); and the region $\mathcal{R}^-$ is the region ${}_*^*\mathcal{R}_{x+1,y,t-y}(a)$. Therefore, (\ref{main1eq1}) follows from (\ref{maineq1}) and Theorems \ref{R1thm} and \ref{R2thm}.

We next prove (\ref{main1eq2}). This proof is similar to that of (\ref{main1eq1}). We apply the same procedure to the dual graph $G'$ of the region  $\mathcal{H}_{2t,y}(2a,2x)$. By Ciucu's Factorization Theorem, we  get
\begin{equation}\label{maineq2}
\CS(\mathcal{H}_{y,2t}(2a,2x))=\M(\Orb(G'))=2^{2t+4a}\M(\Orb(G')^+)\M(\Orb(G')^-).
\end{equation}
The  graphs  $\Orb(G')^+$ and $\Orb(G')^-$  now correspond to the regions $\mathcal{R}_{x+1,y,t-y-1}(a)$ and ${}_*^*\mathcal{R}_{x+1,y,t-y}(a)$, respectively (illustrated in Figure \ref{onethird3}(c)). Then (\ref{main1eq2}) follows from (\ref{maineq2}) and Theorems \ref{R1thm} and \ref{R2thm}.

To prove (\ref{main1eq3}), we apply the same procedure in the proof of (\ref{main1eq1}) to the dual graph $\overline{G}$ of $\mathcal{H}_{2t+1,y}(2a+1,2x)$. As in equality (\ref{maineq1}), we obtain from Ciucu's Factorization Theorem
\begin{equation}\label{maineq3}
\CS(\mathcal{H}_{2t+1,y}(2a+1,2x))=\M(\Orb(\overline{G}))=2^{2t+4a+3}\M(\Orb(\overline{G})^+)\M(\Orb(\overline{G})^-).
\end{equation}
The component graphs $\Orb(\overline{G})^+$ and $\Orb(\overline{G})^-$ correspond to the regions $\mathcal{F}_{x+1,y,t-y}(a+1)$ and $\overline{\mathcal{F}}_{x+1,y,t-y}(a)$, respectively (see Figure \ref{Onethirdmix}(a)). Our equality is now implied by Theorems \ref{F1thm} and \ref{F2thm}.

We finally prove (\ref{main1eq4}). We now work with the dual graph $G''$ of $\mathcal{H}_{y,2t}(2a+1,2x)$. We get
\begin{equation}\label{maineq3}
\CS(\mathcal{H}_{2t,y}(2a+1,2x))=\M(\Orb(G''))=2^{2t+4a+2}\M(\Orb(G'')^+)\M(\Orb(G'')^-).
\end{equation}
The graphs $\Orb(G'')^+$ and $\Orb(G'')^-$ on the right-hand side of (\ref{main1eq4}) are the dual graphs of the regions $\mathcal{F}_{x+1,y,t-y-1}(a+1)$ and $\overline{\mathcal{F}}_{x+1,y,t-y}(a)$ (shown in Figure \ref{Onethirdmix}(b)). Then (\ref{main1eq4}) also follows from Theorems \ref{F1thm} and \ref{F2thm}.
\end{proof}

%\begin{figure}\centering
%\includegraphics[width=13cm]{Onethird4.eps}
%\caption{....}\label{onethird4}
%\end{figure}

\begin{figure}\centering
\includegraphics[width=13cm]{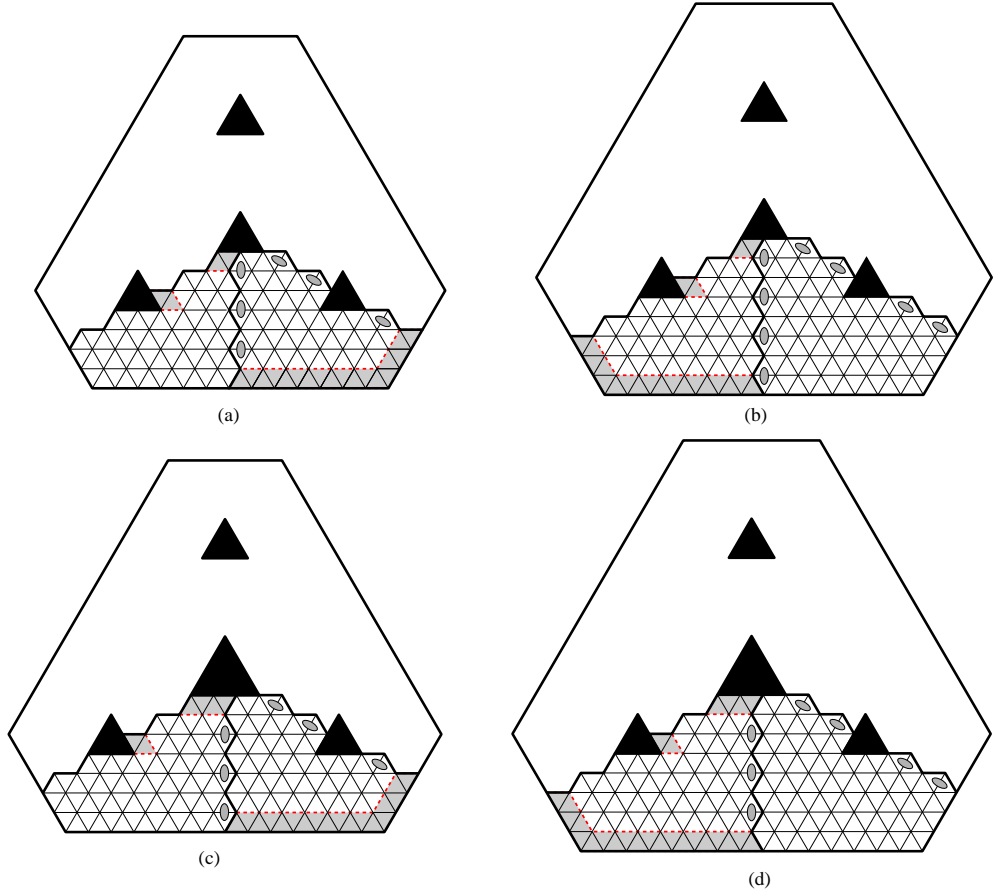}
\caption{ With the holes in different positions, we get (weighted) $\mathcal{E}$-type regions.}\label{onethird1}
\end{figure}

\begin{proof}[Proof of Theorem \ref{main2}]
This theorem can be treated in a completely analogous manner to Theorem \ref{main1}, based on Figure \ref{onethird1}. By applying Ciucu's Factorization Theorem to the orbit graphs and deforming the component graphs of these orbit graphs, we have
\begin{align}\label{maineq5}
\CS&(\overline{\mathcal{H}}_{2t+1,y}(2a,2x))=2^{2t+4a+1}\M(\mathcal{E}_{x+1,y-1,t-y+2}(a))\M({}_*^*\mathcal{E}_{x+1,y,t-y+1}(a)),
\end{align}
\begin{align}\label{maineq6}
\CS&(\overline{\mathcal{H}}_{2t,y}(2a,2x))=2^{2t+4a}\M(\mathcal{E}_{x+1,y-1,t-y+1}(a))\M({}_*^*\mathcal{E}_{x+1,y,t-y+1}(a)),
\end{align}
\begin{align}\label{maineq7}
\CS&(\overline{\mathcal{H}}_{2t+1,y}(2a,2x+1))=2^{2t+4a+1}\M({}_*\mathcal{E}_{x+2,y-1,t-y+2}(a))\M({}^*\mathcal{E}_{x+1,y,t-y+1}(a)),
\end{align}
and
\begin{align}\label{maineq8}
\CS(\overline{\mathcal{H}}_{2t,y}(2a,2x+1))=2^{2t+4a}\M({}_*\mathcal{E}_{x+2,y-1,t-y+1}(a))\M({}^*\mathcal{E}_{x+1,y,t-y+1}(a))
\end{align}
(see Figures \ref{onethird1}(a)--(d), respectively).
Then the theorem follows from Theorems \ref{E1thm}--\ref{E4thm}.
\end{proof}

\begin{proof}[Proof of Theorems  \ref{main3} and \ref{main4}]
These theorems are direct consequences of Theorems \ref{R1thm} and \ref{E1thm}. As shown in Figure \ref{Onethird5}, the number of symmetric tilings $\CSTC(\mathcal{H}_{2t,y}(2a,2x))$ (resp., $\CSTC(\overline{\mathcal{H}}_{2t,y}(2a,2x))$) is exactly the number of tilings of each of six congruent subregions separated by the shaded lozenges. If we remove the forced edges from each of these subregions, we get back the region $\mathcal{R}_{x+1,y,t-y-1}(a)$  (resp., $\mathcal{E}_{x+1,y-1,t-y+1}(a)$). Then the theorems follow from Theorems \ref{R1thm} and \ref{E1thm}.
\end{proof}

\section{Verifying that the tiling formulas satisfy the recurrences in the proofs in Section \ref{lemmas}}\label{verification}
We complete the proofs of Theorems \ref{R1thm}, \ref{F1thm}, \ref{E1thm}, and \ref{E3thm} by verifying that our proposed formulas satisfy the recurrences implied by Kuo condensation. Each recurrence consists of three pairs of products of matching generating functions: one pair on the lefthand side and a sum of two pairs on the righthand side. We plug in our claimed formulas into each of the six matching generating functions in the recurrence. We divide both sides of this equation by the first pair of products on the righthand side. To simplify the resulting mess, we consider pairs of products of corresponding factors in each of the three terms. We begin with Theorem \ref{R1thm}.

\begin{proof}[Proof that $P_1(x,y,z,a)$ satisfies (\ref{Rthmeq7})]
We need to show that

\begin{align}\label{p1verify}
P_1(x+3,y,z,a-1)P_1(x,y,z-1,a)&=P_1(x,y,z,a)P_1(x+3,y,z-1,a-1)&\notag\\&+P_1(x,y+1,z,a-1)P_1(x+3,y-1,z-1,a).
\end{align}
We divide by $P_1(x,y,z,a)P_1(x+3,y,z-1,a-1)$. First, the $\frac{1}{2^{y+z}}$ factors cancel. Now consider the factors corresponding to the product
\begin{equation}\label{p1verify1}
\prod_{i=1}^{y+z}\frac{(2x+6a+2i)_i[2x+6a+4i+1]_{i-1}}{(i)_i[2x+6a+2i+1]_{i-1}}
\end{equation}
in $P_1(x,y,z,a)$ when dividing the lefthand side of (\ref{p1verify}) by the first term on the righthand side. The upper limit of the product is $y+z$ and the $x$ and $a$ parameters only appear as $2x+6a$. Therefore, this product is equal in $P_1(x+3,y,z,a-1)$ and $P_1(x,y,z,a)$ as well as in $P_1(x,y,z-1,a)$ and $P_1(x+3,y,z-1,a-1)$. This cancellation does not occur in this product when dividing the second term on the righthand side by the first. A computation shows that dividing the factors corresponding to (\ref{p1verify}) in $P_1(x,y+1,z,a-1)P_1(x+3,y-1,z-1,a)$ by those in $P_1(x,y,z,a)P_1(x+3,y,z-1,a-1)$ gives
\begin{equation}\label{p1verify2}
\frac{(2x+4y+4z+6a-1)[2x+6a-4]_3}{4[2y+2z-1]_2[2x+2y+2z+6a-2]_2}.
\end{equation}

We now must consider the factors corresponding to
\begin{equation}\label{p1verify3}
\prod_{i=1}^{a}\frac{(z+i)_{y+a-2i+1}(x+y+2z+2a+2i)_{2y+2a-4i+2}(x+3i-2)_{y-i+1}(x+3y+2i-1)_{i-1}}{(i)_y(y+2z+2i-1)_{y+2a-4i+3}(2z+2i)_{y+2a-4i+1}(x+y+z+2a+i)_{y+a-2i+1}}
\end{equation}
when dividing (\ref{p1verify}) by $P_1(x,y,z,a)P_1(x+3,y,z-1,a-1)$. We will consider these one at a time. We first divide each of the three products
\begin{align}\label{p1verify4}
\prod_{i=1}^{a-1}(z+i)_{y+a-2i}\prod_{i=1}^a(z-1+i)_{y+a-2i+1},\,  \prod_{i=1}^{a}(z+i)_{y+a-2i+1}\prod_{i=1}^{a-1}(z-1+i)_{y+a-2i},\,  \notag\\\prod_{i=1}^{a-1}(z+i)_{y+a-2i+1}\prod_{i=1}^{a}(z-1+i)_{y+a-2i}
\end{align}
by the second pair of products. A quick calculation shows that we end up with
\begin{equation}\label{p1verify5}
\frac{z+a-1}{y+z+a-1},\,  1,\,  \frac{z+a-1}{(y+z-1)_2}
\end{equation}
respectively. The analogous simplification of the $(x+y+2z+2a+2i)_{2y+2a-4i+2}$ terms produces
\begin{equation}\label{p1verify6}
\frac{(x+y+2z+2a)(x+y+2z+4a-1)}{(x+3y+2z+4a-1)(x+3y+2z+2a)},\, 1,\, \frac{1}{(x+3y+2z+4a-1)(x+3y+2z+2a)}.
\end{equation}

The $(x+3i-2)_{y-i+1}$ terms give us
\begin{equation}\label{p1verify7}
1, 1, \frac{(x+y+1)(x+y+2a)}{(x+3a-2)_3}
\end{equation}
and the $(x+3y+2i-1)_{i-1}$ terms cancel out completely. A similar simplification for the entire denominator in (\ref{p1verify3}) gives
\begin{equation}\label{p1verify8}
\frac{(2y+2z+2a-2)_2(x+2y+z+2a)}{(2z+2a-2)_2(x+y+z+2a)},\, 1,\, \frac{y(2y+2z-2)_4(x+y+z+3a-1)_2}{(2z+2a-2)_2(x+y+z+2a)}.
\end{equation}
After the simplifications of (\ref{p1verify2}), (\ref{p1verify5}), (\ref{p1verify6}), (\ref{p1verify7}), and (\ref{p1verify8}), (\ref{p1verify})
is easily shown to hold.
\end{proof}

\begin{proof}[Proof that $F_1(x,y,z,a)$ satisfies (\ref{Fthmeq1})]
To complete the proof of Theorem \ref{F1thm}, we need to show that
\begin{align}\label{f1verify1}
F_1(x+3,y-1,z,a)F_1(x,y,z-2,a+1)&=F_1(x,y,z-1,a+1)F_1(x+3,y-1,z-1,a)&\notag\\&+F_1(x,y,z,a)F_1(x+3,y-1,z-2,a+1).
\end{align}
We divide by $F_1(x,y,z-1,a+1)F_1(x+3,y-1,z-1,a)$. We look at the factors corresponding to
\begin{equation}\label{f1verify2}
\frac{1}{2^{y(a-1)}}\frac{1}{2^{y+z}}\prod_{i=1}^{y+z}\frac{i!(x+3a+i-3)!(2x+6a+2i-4)_i(x+3a+2i-2)_i(2x+6a+3i-4)}{(x+3a+2i-2)!(2i)!}
\end{equation}
The $\frac{1}{2^{y(a-1)}}$ factors cancel on the lefthand side, but give us a 2 in the second term on the righthand side. As in the previous proof, the $y$ and $z$ parameters only appear as $y+z$, and the $x$ and $a$ parameters only appear as $x+3a$. Therefore, this product is equal in $F_1(x+3,y-1,z,a)$ and $F_1(x,y,z-1,a+1)$, and also in $F_1(x,y,z-2,a+1)$ and $F_1(x+3,y-1,z-1,a)$. When dividing the factors corresponding to (\ref{f1verify2}) in the second term on the righthand side of (\ref{f1verify1}) by those in $F_1(x,y,z-1,a+1)F_1(x+3,y-1,z-1,a)$, we get
\begin{equation}\label{f1verify3}
\frac{(x+2y+2z+3a-1)[2x+6a-1]_3}{[2x+2y+2z+6a-1]_2[2y+2z-3]_2}.
\end{equation}
It is easy to see that all factors corresponding to
\begin{equation}\label{f1verify4}
\frac{\prod_{i=1}^{\lfloor \frac{a}{3}\rfloor}(x+3y+6i-3)_{3a-9i+1}}{\prod_{i=1}^{\lfloor \frac{a-1}{3}\rfloor}(x+3y+6i-2)}
\end{equation}
cancel entirely, as all six $F_1$ terms in (\ref{f1verify1}) produce equal values of $x+3y$.

After division by $F_1(x,y,z-1,a+1)F_1(x+3,y-1,z-1,a)$, the terms corresponding to
\begin{equation}\label{f1verify5}
\prod_{i=1}^{a-1}(x+3i-2)_{y-i+1}(x+y+2z+2a+2i)_{2y+2a-4i}\times\frac{[x+y+2z+2a+1]_{y}}{[x+y+2a-1]_{y}}
\end{equation}
simplify to
\begin{equation}\label{f1verify6}
\frac{(x+y+2z+2a-1)_2}{(x+3y+2z+4a-2)_2},\, 1,\, \frac{(x+y+2a)_2}{(x+3y+2z+4a-2)_2(x+3a-2)_3},
\end{equation}
respectively. Lastly, the factors corresponding to
\begin{equation}\label{f1verify7}
\prod_{i=1}^{y}\frac{[2i+3]_{z-1}(x+3a+3i-5)_{2y+z-a-4i+5}}{(a+i+1)_{z-1}(i)_{a+1}[2i+3]_{a-2}[2x+6a+6i-7]_{z+2y-4i+3}}
\end{equation}
become
\begin{equation}\label{f1verify8}
\frac{(y+z+a-1)(2x+4y+2z+6a-1)}{(2z-1)(x+y+z+2a)},\, 1,\, \frac{(x+3a-2)_3[2x+2y+2z+6a-1]_2[2y+2z-3]_2(2y+2a-1)}{(2z-1)(x+y+z+2a)[2x+6a-1]_3}
\end{equation}
after division and some simplification. Combining the results of (\ref{f1verify3}), (\ref{f1verify6}), and (\ref{f1verify8}), we verify (\ref{f1verify1}).
\end{proof}

\begin{proof}[Proof that $E_1(x,y,z,a)$ satisfies (\ref{Ethmeq7})]
We need to prove that
\begin{align}\label{e1verify1}
E_1(x,y,z,a)E_1(x+3,y,z-1,a-1)&=E_1(x+3,y,z,a-1)E_1(x,y,z-1,a)&\notag\\&+E_1(x,y+2,z-1,a-1)E_1(x+3,y-2,z,a).
\end{align}
We divide (\ref{e1verify1}) by $E_1(x+3,y,z,a-1)E_1(x,y,z-1,a)$, notice that the leading powers of two cancel, and consider the factors corresponding to:
\begin{equation}\label{e1verify2}
\prod_{i=1}^{a}\frac{(2x+2i)_i[2x+4i+1]_{i-1}}{(i)_i[2x+2i+1]_{i-1}}
\end{equation}
These factors contain parameters $x$ and $a$ only, meaning that they are equal for $E_1(x,y,z,a)$ and $E_1(x,y,z-1,a)$ as well as for $E_1(x+3,y,z-1,a-1)$ and $E_1(x+3,y,z,a-1)$. When dividing these factors in $E_1(x,y+2,z-1,a-1)E_1(x+3,y-2,z,a)$ by the analogous factors in $E_1(x+3,y,z,a-1)E_1(x,y,z-1,a)$ we get
\begin{equation}\label{e1verify3}
\frac{(2x+3a)_6[2x+6a-1]_3[2x+2a+1]_3}{(2x+2a)_6[2x+4a+1]_3[2x+4a-1]_3}.
\end{equation}

Next we come to the factors arising from the product
\begin{equation}\label{e1verify4}
\prod_{i=1}^{y+z-1}\frac{(2x+6a+2i)_i[2x+6a+4i+1]_{i-1}}{(i)_i[2x+6a+2i+1]_{i-1}}.
\end{equation}
Notice that this product is the same as that in (\ref{p1verify1}) except that the upper limit of the product is $y+z-1$ instead of $y+z$.
Further, the parameter values for $x$ and $a$ on the lefthand side of (\ref{e1verify1}) match those in the first term on the righthand side of (\ref{p1verify}) and vice versa, while the parameter values (for $x$ and $a$) of the remaining term in each equation are the same. Therefore, there is complete cancellation on the lefthand side of (\ref{e1verify1}) when dividing the products corresponding to (\ref{e1verify4}) in $E_1(x,y,z,a)E_1(x+3,y,z-1,a-1)$ by those in $E_1(x+3,y,z,a-1)E_1(x,y,z-1,a)$, while the second term on the righthand side simplifies to
\begin{equation}\label{e1verify5}
\frac{(2x+4y+4z+6a-5)[2x+6a-4]_3}{4[2y+2z-3]_2[2x+2y+2z+6a-4]_2}.
\end{equation}
A computation shows that the factors corresponding to
\begin{equation}\label{e1verify6}
\prod_{i=1}^{a}\frac{(z+i)_{y+a-2i+1}(2x+3y+2z+4a+2i-3)_{y+2a-4i+1}(x+a+i)_{y+a-2i}(2x+3y+3a+3i-3)_{a-i}}{(2i)_{y-1}(y+2z+2i-1)_{y+2a-4i+2}(x+y+z+2a+i-1)_{y+a-2i}(2x+3a+3i)_{a-i}}
\end{equation}
in each of the three terms, when divided by those in the first term on the righthand side of (\ref{e1verify1}) reduce, respectively, to
\begin{align}\label{e1verify7}
&\frac{(2x+4y+2z+4a-3)(y+2z+2a-3)_2(x+y+z+2a-1)}{(z+a-1)(2x+3y+2z+4a-3)_2(2y+2z+2a-3)},\, 1,\, \notag\\
&\frac{128(y-1)_2[2y+2z-3]_2(x+y+z+3a-2)_2(x+y+a)(x+a)_3[2x+4a+1]_3[2x+4a-1]_3(2x+6a-3)}{(2y+2z+2a-3)(2x+3y+2z+4a-3)_2(z+a-1)(2x+6a-4)_6(2x+3a)_6(2x+6a+3)}.
\end{align}
We can now combine (\ref{e1verify3}), (\ref{e1verify5}), and (\ref{e1verify7}) and simplify to prove (\ref{e1verify1}).
\end{proof}

\begin{proof}[Proof that $E_3(x,y,z,a)$ satisfies (\ref{E3thmeq7})]
We will prove this for the case where $y=2k$, so that $E_3$ is defined as in (\ref{E3bform}). We also assume for convenience that $a$ and $z$ are even; the other cases are similar. We need to show that
\begin{align}\label{e3verify1}
E_3(x,2k,z,a)E_3(x+3,2k,z-1,a-1)&=E_3(x+3,2k,z,a-1)E_3(x,2k,z-1,a)&\notag\\&+E_3(x,2k+2,z-1,a-1)E_3(x+3,2k-2,z,a).
\end{align}
We divide (\ref{e3verify1}) by the first term on the righthand side, and consider the three pairs of products in which each product corresponds to
\begin{equation}\label{e3verify2}
2^{\lfloor\frac{a+1}{2}\rfloor\lfloor\frac{z+1}{2}\rfloor+\lfloor\frac{a}{2}\rfloor\lfloor\frac{z}{2}\rfloor-a-z-2k+1}\prod_{i=1}^{2k-1+a+z}\frac{i!(x+i-2)!(2x+2i-2)_i(x+2i-1)_i(2x+3i-2)}{(x+2i-1)!(2i)!}.
\end{equation}
After simplification, we get, for both the lefthand side and the second term on the righthand side,
\begin{equation}\label{e3verify3}
\frac{4[2x+4k+2z+2a-3]_2(x+4k+2z+2a-3)_2}{(2x+6k+3z+3a-4)_3(4k+2z+2a-3)}.
\end{equation}

The factors corresponding to the product
\begin{equation}\label{e3verify4}
\prod_{i=1}^{\lceil \frac{a-1}{3}\rceil}[2x+6k+2f(a+i-1)-1]_{f(a-3i+2)-1}\prod_{i=1}^{\lceil \frac{a-2}{3}\rceil}(x+3k+f(a+i-2)+1)_{f(a-3i+1)-1}
\end{equation}
all cancel. We next move to the factors arising from
\begin{align}\label{e3verify5}
\prod_{i=1}^{\lfloor \frac{a+1}{2}\rfloor}[2x+6k+2z+4a+4i-5]_{\lfloor\frac{z}{2}\rfloor +a-5i+4}(x+3k+z+2a+2i-3)_{\lfloor\frac{z+1}{2}\rfloor+a-5i+4}\notag\\
\times\prod_{i=1}^{\lfloor\frac{a}{2}\rfloor}[2x+6k+2z+4a+4i-3]_{\lfloor\frac{z+1}{2}\rfloor+a-5i+1} (x+3k+z+2a+2i-2)_{\lfloor\frac{z}{2}\rfloor+a-5i+2}
\end{align}
Recall that we assume here that $a$ and $z$ are even. When dividing by the terms in $E_3(x+3,2k,z,a-1)E_3(x,2k,z-1,a)$ corresponding to (\ref{e3verify5}) we obtain, again for the lefthand side and the second term on the righthand side,
\begin{equation}\label{e3verify6}
\frac{(2x+6k+3z+3a-4)_3}{2(2x+6k+2z+4a-4)_2}.
\end{equation}
We now simplify the factors corresponding to
\begin{equation}\label{e3verify7}
\prod_{i=1}^{a}\frac{[2k+2i-1]_{z+a-2i+1}(k+i)_{z+a-2i+1}}{(i)_{z+a-2i+1}[2x+4k+2a+2i-3]_{z+a-2i+1}(x+4k+z+2a+i-2)_{z+a-2i+1}}.
\end{equation}
After dividing the factors in (\ref{e3verify1}) corresponding to (\ref{e3verify7}) by those in $E_3(x+3,2k,z,a-1)E_3(x,2k,z-1,a)$ and reducing, we are left with, respectively,
\begin{align}\label{e3verify8}
\frac{(2k+2z+2a-3)(k+z+a-1)(2x+4k+2z+4a-3)(x+4k+z+4a-3)}{(z+a-1)[2x+4k+2z+2a-3]_2(x+4k+2z+2a-3)_2},\, 1, \,\notag\\ \frac{(2k-1)k(2x+4k+2a-1)(x+4k+2z+3a-3)}{(z+a-1)[2x+4k+2z+2a-3]_2(x+4k+2z+2a-3)_2}.
\end{align}
Using the simplifications of (\ref{e3verify3}), (\ref{e3verify6}), and (\ref{e3verify8}), one readily sees that (\ref{e3verify1}) holds.
\end{proof}


\begin{thebibliography}{30}

\bibitem{AS} G. Andrew and D. Stanton, \emph{Determinants in plane partition enumeration}, Eur. J. Combin., \textbf{19} (1998), 273--282.



\bibitem{CiuRef} M. Ciucu, \emph{Enumeration of perfect matchings in graphs with reflective symmetry}, J. Combin. Theory Ser. A ,\textbf{77} (1997), 67--97.

\bibitem{CiucuPunch} M. Ciucu
\emph{Enumeration of lozenge tilings of punctured hexagons},
J. Combin. Theory Ser. A, \textbf{83} (1998), 268--272.

\bibitem{CiuPP2} M. Ciucu,
\emph{Plane partitions I: a generalization of MacMahon's formula},
Mem. Amer. Math. Soc., \textbf{178}(839) (2005), 107--144.

\bibitem{Ciu} M. Ciucu, \emph{Another dual of MacMahon's theorem on plane partitions} (2015),  Preprint  \texttt{http://arxiv.org/abs/1509.06421}.

\bibitem{CEKZ} M. Ciucu, T. Eisenk\"{o}lbl, C. Krattenthaler, and D. Zare, \emph{Enumeration of lozenge tilings of hexagons with a central triangular hole}, J. Combin. Theory Ser. A ,\textbf{95} (2001), 251--334.

\bibitem{Ciucu60} M. Ciucu and I. Fischer, \emph{A triangular gap of side 2 in a sea of dimers in a 60 degree angle}, J. Phys. A: Math. Theor., \textbf{45} (2012), 494011.

\bibitem{CK3} M. Ciucu and C. Krattenthaler, \emph{Enumeration of lozenge tilings of hexagons with cut off corners}, J. Combin. Theory Ser. A, \textbf{100} (2002), 201--231

\bibitem{CK} M. Ciucu and C. Krattenthaler, \emph{A dual of MacMahon's theorem on plane partitions}, Proc. Natl. Acad. Sci. USA, \textbf{110} (2013), 4518--4523.

\bibitem{CK2} M. Ciucu and C. Krattenthaler,
\emph{A factorization theorem for lozenge tilings of a hexagon with triangular holes} (2014), Preprint \texttt{arXiv:1403.3323}.

\bibitem{CiuPP} M. Ciucu and C. Krattenthaler,  \emph{Plane partitions II: 5 1/2 symmetry classes}, Adv. Stud. Pure Math., \textbf{28} (2000), 83--103.

\bibitem{CLP} H. Cohn, M. Larsen, J. Propp
\emph{The shape of a typical boxed plane partition}
New York J. Math., \textbf{4} (1998), 137--165.

\bibitem{Eisen} T. Eisenk\"{o}lbl,
\emph{Rhombus tilings of a hexagon with two triangles missing on the symmetry axis},
Electron. J. Combin., \textbf{6} (1999), \#P 30

\bibitem{Eisen2} T. Eisenk\"{o}lbl,
\emph{Rhombus tilings of a hexagon with three fixed border tiles},
J. Combin. Theory Ser. A, \textbf{88} (1999), 368--378.

\bibitem{GV}
I. M. Gessel and X. Viennot,
\emph{Binomial determinants, paths, and hook length formulae},  Adv.  Math., \textbf{58} (1985), 300--32.

\bibitem{Krattenthaler} C. Krattenthaler, \emph{Plane partitions in the work of Richard Stanley and his school} (2015), arXiv:1503.05934.

\bibitem{Kuo}
E. H. Kuo,
\emph{Applications of Graphical Condensation for Enumerating Matchings and Tilings},
Theor. Comput. Sci., \textbf{319} (2004),
29--57.

\bibitem{Tri}
T. Lai,
\emph{Enumeration of hybrid domino-lozenge tilings}, J. Combin. Theory Ser. A, \textbf{122} (2014), 53--81.

\bibitem{Tri1}
T. Lai, \textit{A $q$-enumeration of lozenge tilings of a hexagon with four adjacent triangles removed from the boundary},  to appear in European J. Combin. (2017).  Preprint \texttt {http://arxiv.org/abs/1502.01679}.

\bibitem{Tri2}
T. Lai, \textit{A q-enumeration of lozenge tilings of a hexagon with three dents}, Adv.  Applied Math., \textbf{82} (2017), 23--57.

\bibitem{Lind}
B. Lindstr\"{o}m,
\emph{On the vector representations of induced matroids}, Bull. London Math. Soc., \textbf{5}(1973),  85--90.


\bibitem{Macdonald} I. G. Macdonald,
\emph{Symmetric Functions and Hall Polynomials}, 2nd edition, Oxford University Press,
New York/London, 1995.

\bibitem{Mac}
P. A. MacMahon,
\emph{Combinatory Analysis}, vol. 2,
Cambridge Univ. Press, 1916, reprinted by Chelsea, New York, 1960.

\bibitem{MRR}
W. H. Mills, D. P. Robbins and H. Rumsey Jr.,
\emph{Proof of the Macdonald conjecture}, Invent. Math., \textbf{66}(1982), 73--87.

\bibitem{MRR2} W. H. Mills, D. P. Robbins and H. Rumsey Jr., \emph{Enumeration of a symmetry class
of plane partitions}, Discrete Math., \textbf{67} (1987), 43--55.

\bibitem{Mui}
T. Muir, \emph{The Theory of Determinants in the Historical Order of Development}, vol. I, Macmil-
lan, London, 1906.

\bibitem{OK} S. Okada and C. Krattenthaler
\emph{The number of rhombus tilings of a ?punctured? hexagon and the minor summation formula},
Adv. in Appl. Math., \textbf{21} (1998), 381--404.


\bibitem{Propp} J. Propp,
\emph{Enumeration of matchings: problems and progress},
New Perspectives in Algebraic Combinatorics, Cambridge Univ. Press (1999), 255--291.

\bibitem{Rosen}  H. Rosengren, \emph{Selberg integrals, Askey--Wilson polynomials and lozenge tilings of a hexagon with a triangular hole},  J. Combin. Theory Ser. A,  \textbf{138} (2016), 29--59.




\bibitem{Stanley}
R. Stanley, \textit{Enumerative combinatorics},  Vol 2, Cambridge Univ. Press 1999.

\bibitem{Stem}
J. R. Stembridge, \emph{Nonintersecting paths, Pfaffians and plane partitions}, Adv. Math., \textbf{83} (1990), 96--131.


\end{thebibliography}
\end{document}